\newcommand{\dist}{{\rm dist}}
\newcommand{\N}{\mathbb{N}}
\newcommand{\Z}{\mathbb{Z}}
\renewcommand{\emptyset}{\varnothing}
\theoremstyle{definition}
\newtheorem{theorem}{Theorem}[section]
\newtheorem{lemma}[theorem]{Lemma}
\newtheorem{proposition}[theorem]{Proposition}
\theoremstyle{definition}
\newtheorem{definition}[theorem]{Definition}
\newtheorem{remark}[theorem]{Remark}
\begin{document}

\title[Weak Mixing]{
{Weak Mixing Transformation Which Is Shannon Orbit Equivalent to a Given Ergodic Transformation}
}
\author{James O'Quinn}

\begin{abstract}
    We prove that every ergodic transformation is Shannon orbit equivalent to a weak mixing transformation. The proof is based on the techniques introduced by Fieldsteel and Friedman to show that there is a mixing transformation for a given ergodic transformation $T$ which is, for all $a\geq1$, weak-$a$-equivalent to $T$ and, for all $b\in(0,1)$, strong-$b$-equivalent to $T$. In particular, we will adapt the construction of Fieldsteel and Friedman by which they permute the columns of each Rokhlin tower in a sequence of rapidly growing Rokhlin towers so that the corresponding cocycles converge to an orbit equivalence cocycle of $T$ such that the resulting transformation and orbit equivalence have the desired properties. In addition to this, we will demonstrate a flexible method for obtaining actions of $\Z^{2} $ which are Shannon orbit equivalent to a given ergodic transformation.
\end{abstract}
\maketitle

\begin{section}{Introduction}
Originally formulated by Kerr and Li in their 2021 paper \cite{KLSOE1}, Shannon orbit equivalence adds a restriction to the combinatorial complexity of the mappings that an orbit equivalence implements between orbits. In turn, this added condition means that this is the condition between two free probability measure preserving actions of (possibly different) finitely generated virtually Abelian groups that preserves entropy, as proved by Kerr and Li in \cite{KLSOE2}. So, while Shannon orbit equivalence does preserve entropy between transformations,  we will show, using techniques from Fieldsteel and Friedman's paper \cite{FF}, that Shannon orbit equivalence does not preserve weak mixing  between integer transformations in a rather strong sense: we can always find a weak mixing transformation which is Shannon orbit equivalent to a given ergodic transformation. Before we introduce Shannon orbit equivalence, which we will do in Section 2, we will give some background definitions and results from the theory of orbit equivalence.
\begin{definition}
Let $(X,\mathscr{A},\mu)$ be a probability space, $G$ a countable discrete group, and $G\curvearrowright^{S}(X,\mu)$ a probability measure preserving action. Given an $x\in X$, we call the set \begin{align*}
 O_{S}(x)=\{x'\in X\mid x'=S^{g}(x)\text{ for some }g\in G\}
\end{align*}
the $S$-\textbf{orbit} of $x$. Suppose we have a second probability space $(Y,\mathscr{B},\nu)$, a second group $H$, and a second probability measure preserving action 
$H\curvearrowright^{T}(Y,\nu)$. We say $S$ and $T$ are \textbf{orbit equivalent} if there exists a non-singular, bimeasurable map $\phi:X\to Y$ such that, for almost every $x\in X$, $\phi$ maps the $S$-orbit of $x$ bijectively onto the $T$-orbit of $\phi(x)$. 
\end{definition}
For the rest of this paper, our probability spaces will be atomless and Lebesgue.
\begin{remark} We will use the following more convenient characterization of orbit equivalence since it will reduce some technical baggage. Two probability measure preserving actions $G\curvearrowright^{S}(X,\mu)$ and $H\curvearrowright^{T}(Y,\nu)$ are orbit equivalent if and only if there is a third action $H\curvearrowright^{R}(X,\mu)$ which is conjugate to $T$ such that, for almost every $x\in X$, we have that $O_{T}(x)=O_{S}(x)$.
\end{remark}
The next theorem, due to Dye \cite{DYE}, implies that, for ergodic transformations, orbit equivalence erases their dynamical properties.
\begin{theorem}
If $S:X\to X$ and $T:Y\to Y$ are ergodic probability measure preserving transformations such that $S$ and $T$ are free or $X$ and $Y$ are atomless, then $S$ and $T$ are orbit equivalent.
\end{theorem}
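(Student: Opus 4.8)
The plan is to prove the theorem in the equivalent form: \emph{every ergodic probability measure preserving transformation is orbit equivalent to one fixed model}, and then invoke transitivity and symmetry of orbit equivalence. For the model take the dyadic odometer $R$ on $(\{0,1\}^{\N},\beta)$, with $\beta$ the $(1/2,1/2)$-Bernoulli measure. The hypotheses (``free or atomless'') reduce us to the atomless Lebesgue case, in which an ergodic transformation is automatically aperiodic, so that the Rokhlin lemma is available; accordingly I assume $S$ and $T$ act on atomless spaces.

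The first step is to exhibit the orbit equivalence relation $E_T=\{(x,T^n x):x\in X,\ n\in\Z\}$ as \emph{hyperfinite}, i.e. as an increasing union of finite measure preserving subrelations. Apply the Rokhlin lemma repeatedly to build a refining sequence of Rokhlin towers: bases $B_n$ and heights $h_n$ with $h_n\mid h_{n+1}$ and $h_n\to\infty$, such that $B_n,TB_n,\dots,T^{h_n-1}B_n$ are pairwise disjoint, their union $X_n$ satisfies $\mu(X\setminus X_n)\to 0$, and the $(n{+}1)$-st tower refines the $n$-th (each level of the $n$-th tower is partitioned by levels of the $(n{+}1)$-st, coherently stacked). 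Let $F_n$ be the finite equivalence relation whose classes are the column-orbit segments of the $n$-th tower (and singletons off $X_n$); then $F_n\subseteq F_{n+1}$ and $\bigcup_n F_n=E_T$ mod null. The odometer relation $E_R$ is manifestly hyperfinite, exhausted by the subrelations $G_n$ (``agree from coordinate $n$ on''), which are themselves Rokhlin towers of height $2^n$ with empty error set.

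Next comes the back-and-forth matching of the two hyperfinite structures; what remains is essentially the statement that the ergodic hyperfinite probability measure preserving equivalence relation is unique. After passing to a subsequence of the $h_n$ and, if necessary, interleaving extra refinements into both tower systems, one arranges that at each stage $k$ the combinatorial data of the $T$-tower and of an $R$-tower (base measures and stacking patterns) agree --- because the space is atomless this data is a complete invariant for finite measure isomorphism of the tower partitions. Build finite measure isomorphisms $\phi_k$ carrying the $k$-th $T$-tower partition to the matching $R$-tower partition, hence intertwining $F_{n_k}$ with the corresponding $G_{m_k}$, and do this \emph{coherently}: require $\mu\{x:\phi_{k+1}(x)\neq\phi_k(x)\}<2^{-k}$, which one secures by only modifying $\phi_k$ inside the small error sets and redistributing them by another round of the Rokhlin lemma. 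Then $(\phi_k)$ is Cauchy in the uniform topology on measure isomorphisms (metric $d(\phi,\psi)=\mu\{\phi\neq\psi\}$, which is complete), so it converges to $\phi:X\to\{0,1\}^{\N}$ with $\phi_*\mu=\beta$ and $\phi F_{n_k}\phi^{-1}=G_{m_k}$ for every $k$; taking unions gives $\phi E_T\phi^{-1}=E_R$, i.e. $\phi$ implements an orbit equivalence of $T$ with $R$. Composing the orbit equivalences built for $S$ and for $T$ yields one between $S$ and $T$.

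The main obstacle is the coherence of the back-and-forth step: one must simultaneously (i) force the height sequences and stacking patterns of the two tower systems to line up so that each $F_{n_k}$ is mapped \emph{exactly} onto some $G_{m_k}$, and (ii) keep each extension $\phi_{k+1}$ equal to $\phi_k$ off a set of measure $<2^{-k}$ so that the limit exists and is a genuine measure isomorphism. This is exactly where Dye's argument concentrates its effort, and where the measure-preserving (as opposed to merely Borel) hypothesis is used, since the Rokhlin lemma must be invoked over and over to absorb the accumulated errors.
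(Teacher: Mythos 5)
The paper itself offers no proof of this statement: it is quoted as Dye's theorem with a citation, so your argument has to stand on its own. Your strategy --- exhibit $E_T$ as hyperfinite via nested Rokhlin towers and then prove uniqueness of the ergodic hyperfinite p.m.p.\ equivalence relation by a back-and-forth against the dyadic odometer --- is indeed the standard route to Dye's theorem, and the reduction to the atomless case is harmless. However, the decisive steps are asserted rather than proved. First, $\bigcup_n F_n=E_T$ mod null does not follow merely from $\mu(X\setminus X_n)\to 0$, $h_n\mid h_{n+1}$ and refinement: you need that for a.e.\ $x$ and every fixed $j$, the pair $(x,T^jx)$ eventually lies in a single column, i.e.\ $x$ is eventually outside the error sets and away from the top and bottom $|j|$ levels of the $n$-th tower; this requires a summability/Borel--Cantelli arrangement of the parameters (exactly the kind of bookkeeping this paper does in its own tower constructions). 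Fixable, but not automatic.

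Second, and more seriously, the exact intertwining $\phi_k F_{n_k}\phi_k^{-1}=G_{m_k}$ cannot hold as stated: every $G_m$-class has exactly $2^m$ elements and $G_m$ covers the whole odometer space, whereas $F_{n_k}$ has a positive-measure Rokhlin error set consisting of singleton classes, so the class-size distributions of the two finite relations differ and no measure isomorphism can match them. The phrase that the ``combinatorial data \dots agree because the space is atomless'' conceals the actual content of Dye's argument: one must produce finite subrelations of $E_T$ whose classes all have size exactly $2^m$ and which cover $X$ mod null --- this uses ergodicity to sweep or absorb the Rokhlin error set, not just another application of the Rokhlin lemma --- or else settle for approximate intertwinings and then prove separately that the limit map carries $T$-orbits onto $R$-orbits. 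Likewise the coherence bound $\mu\{\phi_{k+1}\neq\phi_k\}<2^{-k}$ is precisely the hard estimate; ``modifying $\phi_k$ inside the small error sets and redistributing them'' does not explain how a matching at stage $k$ extends to the refined stage $k+1$ without moving a definite proportion of mass. So the skeleton is the right one, but as proposed the proof has a genuine gap exactly where the theorem's difficulty lies.
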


In order to turn orbit equivalence into an interesting equivalence relation, we must place some restrictions upon it. The way this is normally accomplished is by requiring additional properties for the cocycles of the orbit equivalence, which we define below.

\begin{definition}
Given two orbit equivalent free p.m.p. actions $G\curvearrowright^{S}(X,\mu)$ and $H\curvearrowright ^{T}(X,\mu)$, we can find measurable functions  $\kappa:X\times G\to H$ and $\lambda:X\times H\to G$ where $\kappa$ is given by the formula $T^{\kappa(x,g)}(x)=S^{g}(x)$ for all $g\in G$ and for a.e. $x\in X$ and $\lambda$ is given by the relationship $S^{\lambda(x,h)}(x)=T^{h}(x)$ for all $h\in H$ and for a.e. $x\in X$. We call these the \textbf{orbit equivalence cocycles} between $T$ and $S$. In general, a \textbf{cocycle} will be any map $\alpha:X\times G\to H$ which satisfies the following two conditions:
\begin{enumerate}
    \item $\alpha(x,\cdot):G\to H$ is a bijection for a.e. $x\in X$
    \item $\alpha(x, g_{1}g_{2})=\alpha(S^{g_{2}}x,g_{1})\alpha(x,g_{2})$ for all $g_{1},g_{2}\in G$ and a.e. $x\in X$.
\end{enumerate}
\end{definition}
\begin{remark} Note that, while the essential property of a cocycle is captured by the second condition, the first condition is useful to assume because we want all of our cocycles to be invertible given a fixed point from the probability space.
\end{remark}
\begin{remark}
Given a cocycle $\alpha$ of the action $G\curvearrowright^{T} (X,\mu)$, we can define a second action $G\curvearrowright^{S} (X,\mu)$ which is orbit equivalent to $T$ as follows. We first define a function $\alpha^{-1}:X\times G \to G$ given by the formula
\begin{align*}
\alpha(x,\alpha^{-1}(x,g))=g
\end{align*}
for every $g\in G$ and almost every $x\in X$ and then defining $S$ by the formula 
\begin{align*}
S^{g}(x)=T^{\alpha^{-1}(x,g)}(x)
\end{align*}
so that $\alpha$ is a cocycle for the equivalence relation between $T$ and $S$.
\end{remark}
Perhaps the two most natural types of restricted orbit equivalence are bounded and integrable orbit equivalence, which we will now define.
\begin{definition}
Let $G\curvearrowright^{S}(X,\mu)$ and $H\curvearrowright ^{T}(X,\mu)$ be orbit equivalent probability measure preserving actions with cocycles $\kappa:X\times G\to H$ and $\lambda:X\times H\to G$. We say $S$ and $T$ are \textbf{boundedly orbit equivalent} if, for every $g\in G$ and $h\in H$, the sets $\{\kappa(g,x)\mid x\in X\}$ and $\{\lambda(h,x)\mid x\in X\}$ are finite.
\end{definition}
\begin{definition}
Let $\Z\curvearrowright^{S}(X,\mu)$ and $\Z\curvearrowright ^{T}(X,\mu)$ be orbit equivalent probability measure preserving transformations with cocycles $\kappa$ and $\lambda$. We say $S$ and $T$ are \textbf{integrably orbit equivalent} if we have that 
\begin{align*}
\int |\kappa(m,x)|d\mu(x)<\infty
\end{align*}
for all $m\in\Z$ and 
\begin{align*}
\int |\lambda(m,x)|d\mu(x)<\infty
\end{align*}
for all $m\in\Z$.
\end{definition}
\begin{remark}
Note that an orbit equivalence between transformations being bounded also implies that it is integrable.
\end{remark}
The premier result related to integrable orbit equivalence is the following theorem from 1968 due to Belinskaya \cite{BELIN}.
\begin{theorem}
Two ergodic measure preserving transformations $S$ and $T$ are integrably orbit equivalent if and only if they are flip-conjugate. That is, either $S$ is conjugate to $T$ or $S$ is conjugate to $T^{-1}$.
\end{theorem}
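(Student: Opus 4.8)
I will prove the harder direction; the forward direction is routine, since if $S=RTR^{-1}$ for a measure preserving $R$ then, realizing the orbit equivalence between $S$ and $S$ itself (which is conjugate to $T$ by the Remark), the cocycles are $\kappa(m,x)=\lambda(m,x)=m$, while if $S$ is conjugate to $T^{-1}$ one may take $\kappa(m,x)=\lambda(m,x)=-m$; in either case $\int|\kappa(m,\cdot)|\,d\mu=|m|<\infty$. So assume $S$ and $T$ are integrably orbit equivalent. Using the reformulation in the Remark I may assume $S$ and $T$ act on the same $(X,\mu)$ with $O_S(x)=O_T(x)$ for a.e.\ $x$; being ergodic on an atomless space they are aperiodic, so a.e.\ orbit is a genuine bi-infinite line. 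Define $n\colon X\to\Z$ by $Sx=T^{n(x)}x$; then $n=\kappa(1,\cdot)$, so integrability gives $n\in L^1(\mu)$, and the cocycle identity yields $\kappa(m,x)=\sum_{j=0}^{m-1}n(S^jx)$ for $m\ge1$ together with the analogous backward sum for $m\le-1$. Birkhoff's theorem applied to $S$ and to $S^{-1}$ therefore gives, for a.e.\ $x$, that $\kappa(m,x)/m\to c:=\int n\,d\mu$ as $m\to\pm\infty$.

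Next I would pin down $c$ using that $m\mapsto\kappa(m,x)$ is a bijection of $\Z$ for a.e.\ $x$. If $|c|<1$, then for large $N$ the $2N+1$ distinct integers $\kappa(m,x)$ with $|m|\le N$ would (up to a bounded $x$-dependent error) all lie in an interval containing about $2|c|N<2N$ integers, contradicting injectivity; this in particular rules out $c=0$. If $|c|>1$, then every integer of absolute value at most $L$ is $\kappa(m,x)$ for some $|m|\lesssim L/|c|$, forcing $2L+1$ distinct indices into a set of about $2L/|c|<2L$ integers, contradicting surjectivity. Hence $|c|=1$. Replacing $T$ by $T^{-1}$ if necessary (this keeps the orbits and replaces $n$ by $-n$), I may assume $c=1$, and it then suffices to show that $S$ is conjugate to $T$.

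Since $c=1>0$, for a.e.\ $x$ we have $\kappa(m,x)\to+\infty$ as $m\to+\infty$ and $\kappa(m,x)\to-\infty$ as $m\to-\infty$, so
\[
P(x):=\#\{m\ge1:\kappa(m,x)\le0\},\qquad Q(x):=\#\{m\le-1:\kappa(m,x)\ge1\}
\]
are finite a.e.; set $p:=Q-P\colon X\to\Z$, a measurable function. The heart of the argument is the identity $p(Sx)-p(x)=1-n(x)$ for a.e.\ $x$. To prove it I would use the relation $\kappa(m,Sx)=\kappa(m+1,x)-n(x)$ (immediate from $T^{\kappa(m,Sx)}(Sx)=S^{m+1}x$ and $Sx=T^{n(x)}x$) to rewrite $P(Sx)$ and $Q(Sx)$, and then collapse the resulting telescoping differences of counting functions using the bijectivity of $m\mapsto\kappa(m,x)$ — concretely the fact that $\#\{m\in\Z:\kappa(m,x)\in[1,n(x)]\}=n(x)$ and its analogue for negative $n(x)$. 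Granting this identity, put $\phi(x):=T^{p(x)}x$; then $\phi(Sx)=T^{p(x)-n(x)+1}T^{n(x)}x=T\,\phi(x)$, so $\phi$ intertwines $S$ and $T$.

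Finally I would check that $\phi$ is a measure preserving automorphism. From $\phi\circ S=T\circ\phi$ and aperiodicity, $\phi$ restricts on each orbit $O_S(x)=O_T(x)=O_T(\phi(x))$ to a bijection of that orbit onto itself, so $\phi$ is a bijection of $X$ mod null whose inverse is again of the form $y\mapsto T^{q(y)}y$ with $q$ measurable; hence $\phi$ and $\phi^{-1}$ send null sets to null sets, so $\phi_*\mu\sim\mu$. As $\phi_*\mu$ and $\mu$ are both $T$-invariant, $d\phi_*\mu/d\mu$ is $T$-invariant, hence constant by ergodicity of $T$, hence $\equiv1$; thus $\phi_*\mu=\mu$ and $S$ is conjugate to $T$. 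Together with the case $c=-1$ this shows $S$ and $T$ are flip-conjugate. I expect the main obstacle to be the combinatorial identity for $p$: getting the index bookkeeping for $P(Sx)$ and $Q(Sx)$ exactly right and isolating the single place — bijectivity of the cocycle $m\mapsto\kappa(m,x)$ — where two-sided orbit equivalence, as opposed to a mere measurable orbit map, is indispensable (this is also precisely what forces $c=\pm1$).
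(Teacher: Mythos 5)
The paper itself offers no proof of this statement: it is Belinskaya's theorem, imported from the cited 1968 paper \cite{BELIN}, so there is nothing internal to compare your argument against. Judged on its own terms, your proposal is correct and essentially complete, and it follows the standard modern route to the result. The forward direction is as routine as you say. For the converse, the chain (integrability of $n=\kappa(1,\cdot)$, Birkhoff along $S$ and $S^{-1}$ to get the a.e.\ slope $c=\int n\,d\mu$, and the counting argument using bijectivity of $m\mapsto\kappa(m,x)$ to force $|c|=1$, which also rules out $c=0$) is sound, and the one step you flag as the main obstacle does go through: using $\kappa(m,Sx)=\kappa(m+1,x)-n(x)$ and treating the cases $n(x)\geq 1$ and $n(x)\leq -1$ separately (note $n\neq 0$ a.e.\ by aperiodicity), the shifted counts $P(Sx),Q(Sx)$ differ from $P(x),Q(x)$ by the number of $m$ with $\kappa(m,x)\in[1,n(x)]$, respectively $[n(x)+1,0]$; bijectivity says there are exactly $|n(x)|$ such $m$, of which exactly one lies in $\{0,1\}$, and the bookkeeping then yields $p(Sx)-p(x)=1-n(x)$ in both cases. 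Two small touches would streamline the endgame: surjectivity of $\phi$ onto each orbit is immediate from the iterated intertwining $\phi(S^{m}x)=T^{m+p(x)}x$, with no need to argue injectivity separately, and the finiteness of $P$ and $Q$ is worth recording explicitly as the precise point where $c=1>0$ (hence integrability) enters. The final Radon--Nikodym/ergodicity argument upgrading the nonsingular conjugacy to a measure-preserving one is fine. In short: this is a legitimate self-contained derivation of the quoted theorem, consistent with how the result is proved in the literature the paper cites rather than in the paper itself.
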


One of the first lines of investigation to follow for restrictive orbit equivalences is to understand where they lie on the spectrum between unrestricted orbit equivalence, which erases the dynamics of ergodic actions, and conjugation. One way to accomplish this is to see if a given type of restricted orbit equivalence preserves dynamical properties which are known conjugation invariants. One such property is mixing. One of the primordial results in this direction is the following result due to Friedman and Ornstein \cite{FO} in the realm of Kakutani equivalence.
\begin{theorem}
If $T:X\to X$ is an ergodic transformation, then there is a mixing transformation $S:X\to X$ such that $T$ is kakutani equivalent to $S$.
\end{theorem}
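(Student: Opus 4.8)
The plan is to produce $S$ as a first-return (induced) transformation $T_A$ of $T$ on a well-chosen positive-measure set $A$; since $T_A$ is Kakutani equivalent to $T$ essentially by definition, it then suffices to arrange that $T_A$ is mixing. The construction is cleanest in the cutting-and-stacking picture: using Rokhlin's lemma, present $T$ (up to isomorphism) as the transformation built from a refining sequence of Rokhlin towers $\tau_n$ of heights $h_n\to\infty$ with vanishing error sets, so that passing from $\tau_n$ to $\tau_{n+1}$ amounts to cutting the current column into $r_n$ subcolumns and restacking. One then builds $S$ by the \emph{same} procedure on an enlarged space, except that at stage $n$ one also inserts $s_{n,i}$ new ``spacer'' levels on top of the $i$-th subcolumn before restacking. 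Declaring $A$ to be the union of all non-spacer levels, the first-return map $S_A$ simply skips the spacers and restacks the $A$-parts in the original order, hence $S_A$ is isomorphic to $T$; thus $S$ is automatically Kakutani equivalent to $T$, and the only freedom — and the only thing left to exploit — is the choice of how many spacers to insert and where.

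Next I would choose the spacer heights $s_{n,i}$ to grow and to be suitably ``spread out'': for instance a staircase pattern $s_{n,i}=i$, or an explicitly randomized pattern, so that the accumulated offsets $\sigma_{n,i}=\sum_{j\le i}s_{n,j}$ sweep through a long arithmetic-progression-like set. The resulting $S$ is an ergodic measure-preserving transformation (ergodicity is inherited from the cutting-and-stacking structure, and the error/spacer proportions can be kept under control so that the limit is a genuine probability-preserving map), and by construction it induces $T$ on $A$.

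The heart of the matter is checking that $S$ is mixing. Here I would estimate the correlations $\langle U_S^{k}\mathbf{1}_C,\mathbf{1}_D\rangle-\mu(C)\mu(D)$ for $C,D$ running over levels of some fixed finite stage of the construction. The point is that a high power $U_S^{k}$ carries a level of the stage-$n$ column across many of the $r_n$ subcolumns, which have been sheared relative to one another by the varying spacer offsets $\sigma_{n,i}$; consequently the image becomes asymptotically equidistributed among the stage-$n$ levels, which forces the correlations to $0$. Quantitatively this reduces to bounding Weyl-type averages such as $\frac{1}{r_n}\sum_{i}e^{2\pi i k\,\sigma_{n,i}}$ uniformly over the relevant range of $k$, using a van der Corput inequality together with the staircase/random choice of the $\sigma_{n,i}$. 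Feeding these estimates into a diagonal argument over $n$ yields $\langle U_S^{k}f,g\rangle\to\int f\,d\mu\int g\,d\mu$ as $k\to\infty$ for $f,g$ in a dense subalgebra, i.e.\ mixing.

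I expect the main obstacle to be precisely this last, quantitative step: getting decorrelation that is uniform in the time parameter $k$ \emph{while} the cut numbers $r_n$ and heights $h_n$ are themselves growing, and doing so with spacer counts small enough that the identification $S_A\cong T$ and the passage to the limiting probability space remain valid. Balancing ``enough spacers, spread out enough, to mix'' against ``few enough spacers to retain control of the Kakutani equivalence and of ergodicity'' is the delicate point, and it is exactly what the original estimates of Friedman and Ornstein are designed to carry through.
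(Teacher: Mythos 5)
This statement is quoted in the paper as background (Theorem 1.9, due to Friedman and Ornstein \cite{FO}); the paper does not prove it, so your proposal can only be measured against the known construction, whose technique the paper describes as manipulating the columns of a sequence of Rokhlin towers of $T$ itself. Measured that way, your Kakutani-equivalence half is fine (a primitive built over $T$, or an induced map of $T$, is Kakutani equivalent to $T$ essentially by definition), but the proposal has two genuine gaps. First, the very first step is not available: presenting an arbitrary ergodic $T$ so that ``passing from $\tau_n$ to $\tau_{n+1}$ amounts to cutting the current column into $r_n$ subcolumns and restacking'' with vanishing error is precisely the definition of a rank-one transformation. A general ergodic $T$ (for instance any positive-entropy one) admits Rokhlin towers, but their bases must be partitioned into many columns according to $\mathscr{P}$-names, there is no single column being cut and restacked, and so the prescription ``insert $s_{n,i}$ spacers on top of the $i$-th subcolumn'' is not even defined. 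Consequently the claim that the only remaining freedom is the choice of spacer counts collapses at the outset.

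Second, the mixing step --- the entire content of the theorem --- is deferred rather than proved. The reduction to Weyl-type averages $\frac{1}{r_n}\sum_i e^{2\pi i k\,\sigma_{n,i}}$ and van der Corput is the mechanism for rank-one staircase or random-spacer constructions, where the whole dynamics is the stacking of one column; over an arbitrary ergodic base the correlations of $S$ between levels carry the dynamics of $T$ inside the columns, and no choice of spacer offsets alone obviously equidistributes images of a level against sets built from the base dynamics. Your closing remark that this balance ``is exactly what the original estimates of Friedman and Ornstein are designed to carry through'' concedes the heart of the argument. Note also that the actual Friedman--Ornstein route is the opposite of yours: they fix the orbits of $T$ and produce the mixing transformation as the induced (first-return) map on a carefully chosen subset, built by manipulating the columns of a sequence of towers of $T$ --- the same column-manipulation philosophy that Fieldsteel--Friedman, and this paper, then push further --- rather than by building a tower with spacers over a cutting-and-stacking presentation. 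As it stands, your sketch would prove the theorem only for rank-one $T$, and even there only after supplying the quantitative equidistribution estimates you have postponed.
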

The construction of the mixing transformation of Friedman and Ornstein highlights the critical technique that underpins the construction of the actions in the upcoming theorems: manipulating the columns of a sequence of Rokhlin towers to define the orbits of the resulting action. \par
In general, Kakutani equivalence does not imply orbit equivalence. However, del Junco and Rudolph in \cite{DJR} noticed the following relationship between even Kakutani and restricted orbit equivalence showcased in the following theorem.
\begin{theorem}
Ergodic transformations $S$ and $T$ are evenly Kakutani equivalent if and only if there is an orbit equivalence between $S$ and $T$ with cocycle $\alpha$  such that, for almost every $x$, there is an $I(x)\subset \Z$ of density 1 such that 
\begin{align*}
\lim_{\stackrel{|v|\to\infty}{v\in I(x)}}\frac{|\alpha(x,v)-v|}{|v|}=0.
\end{align*}
\end{theorem}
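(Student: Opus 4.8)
The statement is a biconditional, and I would treat the two implications separately; by the Remark following Dye's theorem I may keep all transformations on the fixed space $(X,\mu)$, producing in the forward direction a conjugate copy of $T$ sharing the $S$-orbits and receiving such a copy as part of the hypothesis in the backward direction. The one external fact I would quote about even Kakutani equivalence is its ``common base'' description: $S$ and $T$ are evenly Kakutani equivalent precisely when there are an ergodic transformation $W$ of a Lebesgue probability space $(\Omega,m)$ and integer roof functions $f,g\colon\Omega\to\{1,2,\dots\}$ with $\int f\,dm=\int g\,dm$ such that the tower over $W$ with roof $f$, rescaled to a probability space, is isomorphic to $S$ and the tower over $W$ with roof $g$ is isomorphic to $T$. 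This follows by taking $W$ to be a common induced transformation $S_A\cong T_B$ and $f,g$ the two return-time functions, the equality of integrals being exactly $\mu(A)=\nu(B)$, i.e.\ the word ``evenly''.

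For the forward direction I would start from such $W,f,g$ and build the orbit equivalence as a limit of finite Rokhlin-tower (``castle'') matchings. Fix a tall Rokhlin base $\Omega_0\subset\Omega$ for $W$ of height $M$. Over a base point $\omega\in\Omega_0$ the induced $S$-column has height $h_S(\omega)=\sum_{i<M}f(W^i\omega)$ and the induced $T$-column has height $h_T(\omega)=\sum_{i<M}g(W^i\omega)$; Birkhoff's theorem gives $h_S(\omega),h_T(\omega)=M\int f\,dm+o(M)$ and, crucially because $\int(f-g)\,dm=0$, also $h_S(\omega)-h_T(\omega)=o(M)$ for a.e.\ $\omega$, with the rate made uniform on a large set by Egorov. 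Since $\int f\,dm=\int g\,dm$ the total masses of the two castles agree up to an arbitrarily small error, so after discarding a small junk set I can build a measure-preserving bijection between them that is almost column-preserving: it carries an $S$-orbit segment of length $\approx M\int f\,dm$ running up a column onto a $T$-orbit segment up the matched column with displacement only $O(|h_S-h_T|)=o(M)$, and over longer segments spanning several Rokhlin periods the per-period displacements add with their signs and stay $o$ of the total length, again by Birkhoff. Carrying this out for a sequence $M_k\to\infty$ with the castles nested and the $k$-th matching agreeing with the previous one off a set of measure $<2^{-k}$, the maps converge a.e.\ to a single orbit equivalence, and a Borel--Cantelli argument over $k$ yields, for a.e.\ $x$, a density-one set $I(x)\subset\Z$ with $|\alpha(x,v)-v|/|v|\to0$.

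For the backward direction I would run the same machine in reverse. By the Remark I may take $T$ itself on $X$ with $O_T=O_S$ and cocycle $\alpha$ as in the hypothesis; Egorov makes the convergence uniform on large sets, so for large $N$ and most $x$ the $S$-enumeration and the $T$-enumeration of the orbit of $x$ coincide, along a set of times of density $1-o(1)$, up to displacement $o(N)$ --- that is, $T$ read inside a tall $S$-Rokhlin tower is a ``$\bar f$-small'' perturbation of the shift, and symmetrically with the roles reversed. Feeding a sequence of ever taller Rokhlin towers into the usual coding argument, I would extract positive-measure sets $A$ and $B$ with $S_A\cong T_B$, the displacement bound forcing $\mu(A)=\mu(B)$; with $W=S_A$ and $f,g$ the corresponding return-time functions this is the common-base data above, so $S$ and $T$ are evenly Kakutani equivalent.

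In both directions the genuine work --- and what I expect to be the main obstacle --- is the passage from the finite tower-by-tower matchings to a single almost everywhere defined orbit equivalence (respectively inducing set) that still carries the displacement estimates; this is the Kakutani-equivalence analogue, through Feldman's $\bar f$ metric, of the coherent-refinement arguments of Ornstein theory. The algebraic input that keeps the estimates small is exactly the equality $\int f\,dm=\int g\,dm$: if one dropped ``evenly'' and allowed inducing sets of unequal measures $p\neq q$, the same computation would instead give $|\alpha(x,v)-(p/q)v|/|v|\to0$, which is why it is the identity, not a genuine dilation of $v$, that appears in the statement.
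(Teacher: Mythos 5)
This statement is not proved in the paper at all: it is Theorem 1.10, quoted as background from del Junco and Rudolph \cite{DJR}, so there is no internal argument to compare yours against; what you have written has to stand on its own as a proof of a known, substantial theorem. Your forward direction is essentially workable, and in fact can be done more directly than by nested castle matchings: transfer $T$ to $X$ as the tower over the common induced map $W=S_A\cong T_B$ with roof $g$, match the $S$-orbit and the $T$-orbit of a point in the order given by successive returns to the base, and estimate the displacement at time $v$ by the Birkhoff sums of $f-g$ along the $W$-orbit (zero mean, hence $o$ of the number of returns) plus a boundary term coming from the height of the column currently occupied. That boundary term is exactly why the conclusion is only along a density-one set $I(x)$ when the roof functions are unbounded; your sketch never isolates this, and the Borel--Cantelli sentence at the end of your second paragraph is doing a lot of unexamined work.

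The genuine gap is the backward direction. From the hypothesis you must manufacture sets $A,B$ of equal measure with $S_A$ \emph{isomorphic} to $T_B$, but all the hypothesis gives you is an orbit equivalence whose cocycle is asymptotically the identity along density-one sets of times. The phrase ``feeding a sequence of ever taller Rokhlin towers into the usual coding argument, I would extract positive-measure sets $A$ and $B$ with $S_A\cong T_B$'' is not a proof step: passing from approximate, density-one agreement of orbit enumerations to an exact isomorphism of induced transformations is precisely the content of the theorem, and it is where the $\bar f$-metric and the Ornstein--Rudolph--Weiss/Feldman equivalence-theorem machinery (or del Junco--Rudolph's own argument) must be invoked and verified --- one has to show the relevant name processes are $\bar f$-close at all scales, and that $\bar f$-limits can be realized as genuine isomorphisms of induced maps, none of which follows from Egorov plus Rokhlin towers alone. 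You flag this yourself as ``the main obstacle,'' which is honest, but it means the proposal is a plausible outline rather than a proof; the claim that the displacement bound ``forces'' $\mu(A)=\mu(B)$ also needs an actual argument (e.g.\ an orbit-counting or ratio-ergodic-theorem computation showing the cocycle's asymptotic slope equals the ratio of return-time integrals).
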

Using the above theorem as a model, Fieldsteel and Friedman \cite{FF} defined the following type of restricted orbit equivalence for $\Z^{d}$ ($d>1$).
\begin{definition}
Ergodic $\Z^{d}$-actions $S$ and $T$ are $id$-\textbf{Kakutani equivalent} if there is an orbit equivalence from $S$ to $T$ with cocycle $\alpha$ such that, for almost every $x\in X$, there is a set $I(x)\subset \Z^{d}$ of density 1 satisfying 
\begin{align*}
\lim_{\stackrel{|v|\to\infty}{v\in I(x)}}\frac{|\alpha(x,v)-v|}{|v|}=0.
\end{align*}
\end{definition}
Spurred by this definition, Fieldstel and Friedman asked if results related to Kakutani equivalence of $\Z$ actions can be lifted to the types of restricted orbit equivalence of $\Z^{d}$ modeled above. As Fieldsteel and Friedman show in \cite{FF}, The Friedman-Ornstein result does transfer to this case. The varieties of restricted orbit equivalence for which the Friedman-Ornstein result holds are described as follows.
\begin{definition}
A family of sets $\{I(x)\subset \Z^{d}\}_{x\in X}$ is called a sequence of \textbf{measurable full density} if there exist an increasing sequence of measurable sets $A_{i}\subset X$ and a sequence of integers $M_{i}$ such that $\lim_{i\to\infty}\mu(A_{i})=1$ and for all $x\in X$, $I(x)=\bigcup _{i=1}^{\infty}\{v\in\Z^{d}\mid |v|_{\infty}>M_{i}\text{ and } T^{v}(x)\in A_{i}\}$. For $a>1$, we say there is a \textbf{weak}-$a$-\textbf{equivalence} between $T$ and $S$ if there is an orbit equivalence between $S$ and $T$ such that for the corresponding cocycle $\alpha$ there is a sequence $\{I(x)\}_{x\in X}$ of measurable full density such that for almost every $x$,
\begin{align*}
\lim_{\stackrel{|v|\to\infty}{v\in I(x)}}\frac{|\alpha(x,v)-v|^{a}}{|v|}=0.
\end{align*}
\end{definition}
The second variant for which Fieldsteel-Friedman shows that the Friedman-Ornstein result holds is detailed below.
\begin{definition} Let $b$ be a real number such that $0<b<1$. We say that ergodic $\Z^{d}$-actions  $S$ and $T$ of $(X,\mu)$ are \textbf{strong}-$b$-\textbf{equivalent} if there is an orbit equivalence between $S$ and $T$ such that the cocycle $\alpha$ satisfies
\begin{align*}
\lim_{|v|\to\infty}\frac{|\alpha(x,v)-v|^{b}}{\min(|v|,|\alpha(x,v)|)}=0.
\end{align*}
\end{definition}
Equipped with these definitions, Fieldsteel and Friedman in \cite{FF} prove the following theorem.
\begin{theorem}
 Let $T$ be a probability measure preserving ergodic $\Z^{d}$-action on $(X,\mu)$ for some $d\geq 1$. There is a cocycle $\alpha$ for $T$ producing a mixing action $S=T^{\alpha^{-1}}$ such that the corresponding orbit equivalence between $T$ and $S$ is, for all $a\geq 1$ , a weak-$a$-equivalence and, for all $b\in(0,1)$, a strong-$b$-equivalence.
\end{theorem}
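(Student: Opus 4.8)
The plan is to realize $\alpha$ as a limit, in measure, of cocycles $\alpha_n$ built on an exhausting sequence of Rokhlin towers for $T$, in the spirit of a cutting-and-stacking construction; this is essentially the Fieldsteel--Friedman argument, and I sketch its skeleton. Using the $\Z^d$ Rokhlin lemma, fix a nested sequence of Rokhlin boxes $\mathcal{T}_n$ for $T$: each $\mathcal{T}_n$ is the $T$-orbit of a base $B_n$ that fills out a $d$-dimensional box with rapidly growing side lengths, $\mathcal{T}_{n+1}$ refines $\mathcal{T}_n$, and $\mu\big(\bigcup_n \mathcal{T}_n\big)=1$. One views $\mathcal{T}_{n+1}$ as obtained from $\mathcal{T}_n$ by slicing it into many congruent sub-boxes (``columns'') and re-assembling them --- together with ``spacer'' levels borrowed from the ambient $T$-structure --- into the larger box $\mathcal{T}_{n+1}$. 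The action $S$ is then defined by prescribing, on each $\mathcal{T}_n$, that $S^v$ translate a point ``by $v$ according to the re-assembly order''; concretely one writes down $\alpha_n$ with $T^{\alpha_n(x,v)}x=S_n^vx$ and sets $\alpha=\lim_n\alpha_n$, $S=T^{\alpha^{-1}}$.

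The construction is then tuned so that three things hold. First, convergence: the stage-$(n+1)$ re-assembly leaves the stage-$n$ data unchanged on a set of measure $\ge 1-2^{-n}$ and for all $|v|$ below a threshold $\to\infty$; Borel--Cantelli then gives $\alpha_n\to\alpha$ a.e., $\alpha$ is a genuine orbit-equivalence cocycle for $T$, and $S$ is a well-defined p.m.p.\ $\Z^d$-action with the same orbits as $T$ (hence ergodic, as ergodicity is a property of the orbit relation). Second, mixing: the re-assembly pattern at stage $n$ is chosen to meet a classical spreading criterion of Ornstein/Adams type --- after cutting $\mathcal{T}_n$ into $r_n$ columns and re-stacking them with sufficiently diverse spacers (randomized, or of staircase type), the set $S^k(L)$, for $L$ a level of $\mathcal{T}_n$ and $k$ anywhere in the window from the diameter of $\mathcal{T}_n$ to that of $\mathcal{T}_{n+1}$, is $\varepsilon_n$-close to uniformly spread across $\mathcal{T}_{n+1}$; taking $\varepsilon_n\to0$ forces $S$ to be mixing. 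Third, the metric estimates: the re-assembly is kept ``gentle,'' meaning each column is displaced, and each spacer is, small relative to the current box dimensions. With the growth rates, the column counts $r_n$, and the spacer sizes chosen appropriately, one gets (a) $\alpha(x,v)/v\to1$ for a.e.\ $x$ and every $v$, whence $\min(|v|,|\alpha(x,v)|)\sim|v|$ and $|\alpha(x,v)-v|=o(|v|)$, giving strong-$b$-equivalence for every $b\in(0,1)$; and (b) a sequence $\{I(x)\}$ of measurable full density --- take the $A_i$ to be the orbit-points that remain far from every ``seam'' of the construction up to stage $i$ --- along which $|\alpha(x,v)-v|$ grows subpolynomially in $|v|$, giving weak-$a$-equivalence for every $a\ge1$ since then $|\alpha(x,v)-v|^a=|v|^{o(1)}=o(|v|)$.

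The crux, and the step I expect to be the main obstacle, is reconciling the second and third requirements: mixing pushes toward aggressive scrambling and plenty of injected spacer mass, while the metric estimates demand a re-assembly that is nearly the identity. The point that makes it possible is that mixing need not be produced at any single stage --- in the Ornstein regime it is accumulated over infinitely many stages from perturbations that are individually minuscule (small but diverse spacers), precisely the regime where each stage contributes a negligible amount to the cocycle discrepancy --- and the ``full density'' relaxation permits the cocycle to misbehave on the density-zero set of orbit-points lying near a seam, which is exactly where the discrepancy is large. Turning this into a proof means choosing the tower growth rates so that the spreading estimates close up while the telescoped discrepancy remains $o(|v|)$ globally and $|v|^{o(1)}$ along $I(x)$; this bookkeeping is the technical heart. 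The only genuinely new feature beyond the $d=1$ case is combinatorial: ``towers'' are $d$-dimensional Rokhlin boxes, ``columns'' are sub-boxes, and the cutting, re-stacking, spacer allocation, and spreading estimates must be executed for box tilings; once the one-dimensional mixing criterion is phrased in that language the argument runs in parallel.
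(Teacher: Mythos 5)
First, a point of orientation: the paper does not prove this statement at all. It is Theorem 1.16, quoted from Fieldsteel and Friedman \cite{FF}, and the paper only records (in Section 5) the skeleton of their argument so that it can later be adapted: one builds a sequence of cocycles $\alpha_{n}$ of $T$ such that each $\beta_{n}=\alpha_{n}\circ\alpha_{n-1}^{-1}$ is a $(J_{n},K_{n},L_{n},\epsilon_{n})$-blocked cocycle, i.e.\ almost every orbit is partitioned into translates of the cube $C_{L_{n}}$, inside each of which a rigid sub-cube $C_{J_{n}}$ is translated by a vector in $\bar{C}_{K_{n}}$ and only the thin filler set is scrambled; the translation vectors are chosen uniformly at random and independently across the blocks of a Rokhlin tower, columnwise with respect to a pure partition. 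Their Lemma 1 is the load-bearing statement: if $\{\epsilon_{n}\}$ is summable, $\{K_{n}\}$ is arbitrary, $J_{n}$ grows fast enough and $L_{n}$ is close enough to $J_{n}$, then the $\alpha_{n}$ converge to an orbit-equivalence cocycle $\alpha$ which is simultaneously a weak-$a$-equivalence for all $a\geq1$ and a strong-$b$-equivalence for all $b\in(0,1)$; mixing is then extracted from the accumulated approximate independence, and crucially requires the number of available translation values $K_{n}$ to grow without bound.

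Your proposal reproduces this architecture in outline (towerwise modification of the cocycle, Borel--Cantelli convergence, mixing accumulated over stages, metric estimates from gentleness of each stage), but the two steps that constitute the theorem are asserted rather than argued, so there is a genuine gap. For mixing you appeal to an Ornstein/Adams-type spreading criterion with spacers, claiming that for every $k$ between the diameters of consecutive towers the image of a level is $\varepsilon_{n}$-uniformly spread over the next tower, while simultaneously insisting that each column displacement is small relative to the box dimensions; this quantitative compatibility is exactly what must be proved, and it is not obtained by remarking that mixing ``is accumulated from perturbations that are individually minuscule.'' Indeed the present paper emphasizes (Section 2) that in the Fieldsteel--Friedman scheme mixing forces unbounded growth in the amount of scrambling introduced per stage --- which is precisely why, under the Shannon constraint, only weak mixing is achieved here --- so the tension you flag as ``the main obstacle'' is the theorem, not bookkeeping to be deferred. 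Similarly, the strong-$b$ estimate must hold for all $v$ with no density-one exceptional set, and the weak-$a$ estimate requires subpolynomial discrepancy along measurable full-density sets; your derivation of both reduces to ``with the growth rates chosen appropriately one gets $|\alpha(x,v)-v|=o(|v|)$,'' which is the content of Fieldsteel--Friedman's Lemma 1 and is nowhere established. (Minor further issues: ``$\alpha(x,v)/v\to1$ for every $v$'' is not a meaningful limit statement, and the spacer language sits awkwardly with the fact that the construction is a permutation of existing tower levels --- no mass is inserted --- though this could be repaired by reading your spacers as the filler sets of the blocked permutations.) As a roadmap the proposal is reasonable and recognizably the same strategy as \cite{FF}; as a proof it omits the central estimates.
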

Fieldsteel and Friedman also use the same framework for this construction to get a similar result for the bounded orbit equivalence case. This appears as Theorem 3 in their paper \cite{FF}.
\begin{theorem}
If $T$ is an erogdic $\Z^{d}$-action with $d\geq2$, then there is a cocycle $\alpha$ of $T$ which gives a mixing $T^{\alpha^{-1}}$ such that the orbit equivalence between them is  bounded.
\end{theorem}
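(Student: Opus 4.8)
The plan is to reproduce the Fieldsteel--Friedman argument that this paper adapts: the cocycle $\alpha$ is built as the limit of a Cauchy sequence of cocycles $\alpha_{n}$, each manufactured by permuting the columns of a Rokhlin castle for $T$, with the permutations chosen so that the limit action $S=T^{\alpha^{-1}}$ is mixing while the resulting orbit equivalence is bounded. We may assume $T$ is free, so the $\Z^{d}$ Rokhlin lemma applies. First I would fix a sequence of castles $\tau_{1},\tau_{2},\dots$ for $T$ whose shapes are $d$-dimensional boxes $R_{n}=\prod_{i=1}^{d}[0,L_{n}^{(i)})$, with $\tau_{n+1}$ refining $\tau_{n}$ (each level of $\tau_{n}$ tiled by levels of $\tau_{n+1}$), with the measure of the support of $\tau_{n}$ tending to $1$, and with side lengths growing as fast as the later estimates require. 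A point $x$ in the support of $\tau_{n}$ has a position $p_{n}(x)\in R_{n}$, and any permutation $\pi_{n}$ of $R_{n}$ induces, by the obvious column-wise formula, a cocycle $\alpha_{n}$ supported on $\tau_{n}$ and equal to the trivial cocycle $v\mapsto v$ off the support; the action $T^{\alpha_{n}^{-1}}$ merely permutes the equal-measure levels of $\tau_{n}$ and so is measure preserving. Requiring $\pi_{n+1}$ to be compatible with $\pi_{n}$ on the part of $\tau_{n+1}$ lying inside $\tau_{n}$ up to a set of small measure makes $(\alpha_{n})$ Cauchy in measure, hence convergent to a genuine cocycle $\alpha$ for $T$, and $S:=T^{\alpha^{-1}}$ is then a well-defined measure-preserving $\Z^{d}$-action orbit equivalent to $T$.

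The heart of the proof is to choose the $\pi_{n}$ so as to meet two requirements at once. For \textbf{boundedness} it suffices that, for each fixed $v\in\Z^{d}$, the increments $\pi_{n}^{\pm1}(w+v)-\pi_{n}^{\pm1}(w)$ take only finitely many values as $w$ ranges over $R_{n}$ and $n$ over $\N$; tracing through the formula for $\alpha$ this forces $\{\alpha(x,v):x\in X\}$ and $\{\alpha^{-1}(x,v):x\in X\}$ to be finite for every $v$, which is exactly bounded orbit equivalence. For \textbf{mixing} it suffices that the $\pi_{n}$ spread levels, in the sense that for every $m$ and any two unions of levels $A,B$ of $\tau_{m}$ one has $\mu(S^{v}A\cap B)\to\mu(A)\mu(B)$ as $|v|\to\infty$; this reduces to the combinatorial statement that $\pi_{n}$, shifted by $v$, distributes each $\tau_{m}$-level nearly uniformly among the $\tau_{m}$-levels. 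I would take the $\pi_{n}$ from the explicit family Fieldsteel and Friedman construct: permutations of a $d$-box that use the $d-1\ge1$ transverse coordinate directions to route the rearrangement, so that they are mixing in the sense above while keeping every finite-vector increment controlled.

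Finally I would verify mixing of $S$ on the algebra generated by the castle levels, which is dense in $\mathscr{A}$: for $A,B$ in this algebra and $|v|$ large the action $S^{v}$ is governed by $\pi_{n}$ for the appropriate large $n$, and the spreading property gives $\mu(S^{v}A\cap B)\to\mu(A)\mu(B)$, whence mixing on all of $\mathscr{A}$. The hypothesis $d\ge2$ is used twice: it supplies the room needed to build box-permutations that are simultaneously bounded-per-vector and mixing, and it is genuinely necessary, since by Belinskaya's theorem a bounded (hence integrable) orbit equivalence between ergodic $\Z$-transformations is a flip-conjugacy and so cannot turn a non-mixing $T$ into a mixing $S$. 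The main obstacle is precisely the permutation construction of the previous paragraph: making one sequence of box-permutations do both jobs, and doing it compatibly across refinements so that $(\alpha_{n})$ converges, is the delicate combinatorial core of the argument.
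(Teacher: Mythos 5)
The statement you are addressing is Theorem 1.17, which this paper does not prove at all: it is quoted from Fieldsteel and Friedman (Theorem 3 of \cite{FF}), and the surrounding discussion only describes the general mechanism (cocycles built by permuting columns of a sequence of towers, convergence of the cocycle sequence via 0--1 laws, approximate independence introduced at each stage). Your outline reproduces that mechanism faithfully: refining box-shaped castles, box permutations inducing cocycles, a limit cocycle $\alpha$, boundedness read off from uniformly finite increments, mixing from a spreading property checked on the dense algebra of castle levels, and the correct observation via Belinskaya that $d\geq 2$ is genuinely necessary. So you are following the route the paper points to, not a different one.

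Judged as a proof, however, there is a genuine gap, which you yourself flag: the entire content of the theorem lies in exhibiting permutations that do both jobs at once, and you only gesture at ``the explicit family Fieldsteel and Friedman construct'' without defining it or verifying either property for it. Two steps in particular need real work. First, boundedness is a property of the limit cocycle, hence of the composed modifications across all stages: it is not enough that each stage-$n$ permutation have finitely many increments for a fixed $v$; you must arrange that the composition of all stage cocycles keeps $\alpha_{n}(x,v)$ inside one fixed finite set independent of $n$, and this compatibility across refinements is exactly where the extra directions available when $d\geq2$ are used and where the combinatorics is delicate. Second, ``Cauchy in measure, hence convergent to a genuine cocycle'' is too quick: since the values are discrete, what is actually needed (and what Lemma 1 of Fieldsteel--Friedman, or Proposition 6.1 of this paper, supplies) is almost-everywhere eventual stabilization of $\alpha_{n}(x,v)$, obtained from summable error estimates and Borel--Cantelli; this stabilization is also what lets one verify the cocycle identity and that $\alpha(x,\cdot)$ is a bijection, surjectivity being the nontrivial point. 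As it stands your text is an accurate roadmap of the Fieldsteel--Friedman argument rather than a proof of it.
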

The construction involved in this theorem, as well as the main theorem in the latter half of this paper, hinges on the fact that we can create orbit equivalence cocycles by manipulating a sequence of towers. Indeed, let $(F,B)$ be a Rokhlin tower of the action $\Z^{d}\curvearrowright^{T}(X,\mu)$ and let $\mathscr{A}$ be a partition of $B$. For each $A\in\mathscr{A}$, let $\sigma_{A}:F\to F$ be a bijection . We can define a cocycle $\alpha:X\times \Z^{d}\to Z^{d}$ by setting
\begin{align*}
\alpha(x,g)=
    \begin{cases}
        \sigma_{A}(g+f) & \text{if } T^{g}x \in T^{F}B \text{ and }x=T^{f}b\text{ for some }f\in F\text{ and } b\in A\\
        g & \text{otherwise} 
    \end{cases}
\end{align*}

One key component to making this construction work is that a given sequence of cocycles begets another cocycle when a certain limit is taken. This is the cocycle for our desired action. This requires a condition for which the cocycle value for a given point in the space is fixed for a tail of the sequence. We also require certain types of 0-1 laws to guarantee that such a condition holds for almost every point in the space. The other key component is that Fieldsteel and Friedman get this sequence of cocycles by permuting columns of corresponding Rokhlin towers by independent random translations inside disjoint blocks, thereby introducing approximate independence at each stage of the construction which will produce mixing after the limit is taken.\par 
With this context from restricted orbit equivalence theory in mind, we are ready to introduce Shannon orbit equivalence.  \par

\emph{Acknowledgements}. The author greatly appreciates the guidance and helpful insights from his PhD advisor David Kerr. The research was partially funded by the Deutsche Forschungsgemeinschaft (DFG, German Research Foundation) under Germany’s Excellence Strategy – EXC
2044 – 390685587, Mathematics Münster – Dynamics – Geometry – Structure; the Deutsche
Forschungsgemeinschaft (DFG, German Research Foundation) – Project-ID 427320536 – SFB
1442, and ERC Advanced Grant 834267 - AMAREC
\end{section}
\begin{section}{Shannon Orbit Equivalence}
While Theorem 1.17 tells us that mixing is not a bounded orbit equivalence invariant for ergodic $\Z^{d}$-actions when $d\geq2$, entropy is an invariant of bounded orbit equivalence for these actions. Indeed, Austin shows in his 2016 paper \cite{A} an even stronger result: entropy is an invariant for integrable orbit equivalence of actions of fintely generated amenable groups. So, to what extent can we generalize integrable orbit equivalence so that entropy is still an invariant for a sufficiently rich class of group actions? Perhaps the most natural generalization to look at is Shannon orbit equivalence. Shannon orbit equivalence was introduced by Kerr and Li in \cite{KLSOE1} as a generalization of both integrable orbit equivalence and bounded orbit equivalence. In that paper, the authors prove that entropy is an invariant for Shannon orbit equivalences between actions over certain classes of groups which exclude $\Z$. In a subsequent paper \cite{KLSOE2}, Kerr and Li extend the aforementioned result to actions of the integers.\par

So, the natural question is the following: does a version of Belinskaya's theorem hold for Shannon orbit equivalences between ergodic actions over the integers? This was answered in the negative in a paper by Carderi, Joseph, Le Maître, and Tessera \cite{BEL}. Indeed, the authors craft examples of ergodic transformations which satisfy a weaker type of integrable orbit equivalence, which also implies Shannon orbit equivalence, but are not flip conjugate. So, if Shannon orbit equivalence between ergodic transformations does not imply flip conjugacy, yet it still preserves entropy, what other properties does Shannon orbit equivalence preserve between ergodic transformations? Turning back to Fieldsteel and Friedman, can their method of constructing cocycles be adapted to the Shannon orbit equivalence setting to produce a mixing action which is equivalent to a given ergodic action, similar to Theorem 1.16? In this paper, we will show that a weaker version of the result of Fieldsteel and Friedman does hold in the Shannon orbit equivalence setting. Namely, we will show that for a given ergodic transformation, we can find a weak mixing transformation which is Shannon orbit equivalent to it. While there is inherent tension between the methods used to guarantee mixing by Fieldsteel and Friedman and the finite entropy bounds for the cocycle partitions required by Shannon orbit equivalence, we can finesse the introduction of asymptotic independence to produce weak mixing. In particular, while our method for obtaining Shannon orbit equivalence places a bound on the number of permutations of the orbits we introduce at each stage of the construction, mixing in the Fieldsteel-Friedman construction requires unbounded growth of the number of permutations we introduce at each stage. However, we will show that we can stagger our introduction of relative mixing phenomena at each stage of the construction to guarantee weak mixing while still controlling the number of permutations we introduce at each stage to guarantee Shannon orbit equivalence. 

Before we show our weak mixing result, we will take a slight detour to introduce a method for constructing actions of $\Z^{2}$ which are Shannon orbit equivalent to a given ergodic action of $\Z$. The significance of this construction, which is adapted from the asymptotic tower constructions of Fieldsteel and Friedman, is that it allows for a great degree of flexibility in the choice of permutations applied to each column for each tower in the sequence of towers. Indeed, we replace the block translations of Fieldsteel and Friedman by more general column permutations called column-to-tile maps which encode columns of a tower of an integer action onto squares inside $\Z^{2}$ in a way that preserves the structure of the column permutations from the previous stage. In turn, a sequence of column-to-tile maps will produce an action of $\Z^{2}$ which is Shannon orbit equivalent to the original action, as long as the growth of the heights of the towers associated to each column-to-tile map is sufficiently controlled. We believe that, through careful choice of column-to-tile maps, we can produce actions of $\Z^{2}$, or perhaps actions of any discrete countable group which can be monotiled, that satisfy a particular mixing condition and are Shannon orbit equivalent to a given ergodic transformation.

\begin{definition}
Let $(X,\mathscr{A},\mu)$ be a probability space and let $\mathscr{P}\subset\mathscr{A} $ be a countable partition of $X$. The \textbf{Shannon entropy} of $P$ is the sum $\sum _{P\in \mathscr{P}}-\mu(P)\log\mu(P)$. Given a cocycle $\alpha:X\times G\to H$ and a $g\in G$, there is a natural partition $\mathscr{P}_{\alpha,g}=\{P_{g,h}\mid h\in H\}$ where $P_{g,h}=\{x\in X\mid \alpha(x,g)=h\}$. We call these the \textbf{cocycle partitions} of $\alpha$. We say an orbit equivalence between free p.m.p. actions $T$ and $S$ with cocycles $\kappa$ and $\lambda$ is \textbf{Shannon} if the cocycle partitions $\mathscr{P}_{\kappa,g}$ and $\mathscr{P}_{\lambda,h}$ have finite Shannon entropy for all $g\in G$ and all $h\in H$.
\end{definition}
The following proposition expresses the main criteria we will use to guarantee Shannon orbit equivalence.
\begin{proposition}
Let $G\curvearrowright ^{S}(X,\mu)$ and $H\curvearrowright ^{T}(X,\mu)$ be free probability measure preserving actions and $\alpha: X\times G \to H$ a cocycle between them. Fix a $g\in G$. If the cocycle partition $\mathscr{P}_{\alpha,g}$ has the property that for every $h\in H$ there is a partition $\{D_{n}^{h}\}_{n\in\N}$ of $P_{g,h}$ such that for every $n$ there is a finite set $F_{n}\subset H$ with $D_{n}^{h}=\emptyset$ for $h\notin F_{n}$ and  we have that $\sum_{n=1}^{\infty}-\mu(\bigcup_{h\in F_{n}}D_{n}^{h})\log\frac{\mu(\bigcup_{h\in F_{n}}D_{n}^{h})}{|F_{n}|}<\infty$, then $\mathscr{P}_{\alpha,g}$ has finite Shannon entropy.
\end{proposition}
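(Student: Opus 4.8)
The plan is to exhibit a single countable measurable partition $\mathscr{Q}$ of $X$ which simultaneously refines $\mathscr{P}_{\alpha,g}$ and sits above a coarse partition whose entropy we can control by hypothesis, and then to invoke the two elementary facts about Shannon entropy: it is monotone under refinement, and a partition into at most $k$ cells has entropy at most $\log k$.

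First I would assemble $\mathscr{Q}$. Since $D_n^h\subseteq P_{g,h}$, the $P_{g,h}$ are pairwise disjoint with union $X$ (mod null), and for fixed $h$ the family $\{D_n^h\}_{n\in\N}$ partitions $P_{g,h}$, the collection $\mathscr{Q}:=\{D_n^h: n\in\N,\ h\in H,\ D_n^h\neq\emptyset\}$ is a countable measurable partition of $X$ refining $\mathscr{P}_{\alpha,g}$, so $H(\mathscr{P}_{\alpha,g})\le H(\mathscr{Q})$. Next set $E_n:=\bigcup_{h\in F_n}D_n^h$ for $n\in\N$. If $D_n^h\cap D_m^{h'}\neq\emptyset$ then $h=h'$ (the two sets lie in disjoint $P_{g,\cdot}$ cells) and then $n=m$ (the $\{D_k^h\}_k$ partition $P_{g,h}$); hence the $E_n$ are pairwise disjoint, and $\bigcup_n E_n=\bigcup_h\bigcup_n D_n^h=X$ because $D_n^h=\emptyset$ for $h\notin F_n$. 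Thus $\mathscr{R}:=\{E_n: n\in\N,\ E_n\neq\emptyset\}$ is a countable measurable partition of $X$ coarser than $\mathscr{Q}$, and inside each atom $E_n$ the partition $\mathscr{Q}$ induces at most $|F_n|$ cells.

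Then I would estimate $H(\mathscr{Q})$ directly. Writing $p_n=\mu(E_n)$ and, when $p_n>0$, $q_{n,h}=\mu(D_n^h)/p_n$ (so $\sum_{h\in F_n}q_{n,h}=1$), expand $-\mu(D_n^h)\log\mu(D_n^h)=-p_nq_{n,h}\log p_n+p_n(-q_{n,h}\log q_{n,h})$; all summands are non-negative since $0\le p_n,q_{n,h}\le 1$, so Tonelli permits summing over $h\in F_n$ and then over $n$ in any order. Summing over $h\in F_n$ yields $-p_n\log p_n+p_nH_n$ where $H_n=\sum_{h\in F_n}-q_{n,h}\log q_{n,h}\le\log|F_n|$ (a probability vector of length at most $|F_n|$ has Shannon entropy at most $\log|F_n|$). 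Summing over $n$,
\[
H(\mathscr{Q})\;=\;\sum_n\Big(-p_n\log p_n+p_nH_n\Big)\;\le\;\sum_n\Big(-p_n\log p_n+p_n\log|F_n|\Big)\;=\;\sum_n-\mu(E_n)\log\frac{\mu(E_n)}{|F_n|},
\]
which is finite by hypothesis; therefore $H(\mathscr{P}_{\alpha,g})\le H(\mathscr{Q})<\infty$.

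There is no serious obstacle here: the content is exactly the two standard inequalities for Shannon entropy, packaged through the chain rule $H(\mathscr{Q})=H(\mathscr{R})+H(\mathscr{Q}\mid\mathscr{R})$ for the tower $\mathscr{Q}\succeq\mathscr{R}$. The only points needing care are (i) verifying that the $E_n$ are genuinely pairwise disjoint, which uses that each $D_n^h$ lies in $P_{g,h}$ together with the two partition hypotheses, and (ii) justifying the rearrangement of the double series, for which it suffices that after the split above every term is non-negative so Tonelli applies. One should also dispatch the degenerate cases $\mu(E_n)=0$ or $F_n=\emptyset$, where the relevant terms vanish under the convention $0\log 0=0$.
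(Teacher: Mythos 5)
Your argument is correct and is essentially the paper's proof: both pass to the refinement $\{D_n^h\}$ of $\mathscr{P}_{\alpha,g}$, group its atoms by $n$ into $E_n=\bigcup_{h\in F_n}D_n^h$, and bound the contribution of each group by $-\mu(E_n)\log\frac{\mu(E_n)}{|F_n|}$. Your chain-rule computation $H(\mathscr{Q})=H(\mathscr{R})+H(\mathscr{Q}\mid\mathscr{R})$ with $H(\mathscr{Q}\mid\mathscr{R})\le\sum_n\mu(E_n)\log|F_n|$ is just a more explicit rendering of the paper's appeal to the fact that the equidistribution over $|F_n|$ cells maximizes entropy, with the added (harmless, and welcome) care about disjointness of the $E_n$ and the rearrangement of the double series.
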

\begin{proof}
Assume we have a $g\in G$ such that the the cocycle partition $\mathscr{P}_{\alpha,g}$ has the property that for every $h\in H$ there is a partition $\{D_{n}^{h}\}_{n\in\N}$ of $P_{g,h}$ such that for every $n$ there is a finite set $F_{n}\subset H$ such that $D_{n}^{h}=\emptyset$ for $h\notin F_{n}$ and  we have that $\sum_{n=1}^{\infty}-\mu(\cup_{h\in F_{n}}D_{n}^{h})\log\frac{\mu(\cup_{h\in F_{n}}D_{n}^{h})}{|F_{n}|}<\infty$. 
Since $\{D_{n}^{h}\}_{n\in\N,h\in H}$ is a refinement of $\mathscr{P}_{\alpha,g}$ and entropy increases under refinements, we get the following inequality: 
\begin{align*}
\sum_{h\in H}-\mu(P_{g,h})\log \mu(P_{g,h})\leq \sum_{h\in H,n\in\N}-\mu(D^{h}_{n})\log \mu(D^{h}_{n}).
\end{align*}
 Note that the equipartition of a set maximizes entropy relative to any other partition of the same size. So, we can conclude  that 
\begin{align*}
\sum_{h\in H, n\in\N}-\mu(D^{h}_{n})\log \mu(D^{h}_{n})&\leq \sum_{n\in\N}\sum_{h\in F_{n}}-\mu(D^{h}_{n})\log \mu(D^{h}_{n})\\
&\leq  \sum_{n\in\N} -|F_{n}| \frac{\mu(\cup_{h\in F_{n}}D^{h}_{n})}{|F_{n}|}\log\frac{\mu(\cup_{h\in F_{n}}D^{h}_{n})}{|F_{n}|}\\
&\leq  \sum_{n\in\N} - \mu(\cup_{h\in F_{n}}D^{h}_{n})\log\frac{\mu(\cup_{h\in F_{n}}D^{h}_{n})}{|F_{n}|}
\end{align*}

So, working backwards, we can see that $\sum_{h\in H}-\mu(P_{g,h})\log \mu(P_{g,h})<\infty$.
\end{proof}
\end{section}
\begin{section}{Column Permutations of Towers}
Let $(X,\mu)$ be a standard atomless probability space and $\Z\curvearrowright^{S} (X,\mu)$ be an ergodic (free) p.m.p action.
\begin{definition}
    We call a pair $(T,B)$ consisting of a finite set $T\subset\Z$ and a measurable set $B\subset X$ a \textbf{tower} of $S$ if the sets in the collection $\{S^{n}B\}_{n\in T}$ are pairwise disjoint. Moreover, if $(T,B)$ is a tower such that $\mu\bigg(X\setminus(S^{T}B)\bigg)>1-\epsilon$ for some $\epsilon>0$, we say that $(T,B)$ is an $\epsilon$-\textbf{tower}. Given an $\epsilon$-tower $(T,B)$ and a partition $\mathscr{A}$ of $B$, we call the collection $\{S^{n}A\}_{n\in T}$ the \textbf{column} over $A\in\mathscr{A}$ in $(T,B)$ and the set $S^{n}A$ for some $n\in T$ a \textbf{column level} in $\{S^{n}A\}_{n\in T}$.  
\end{definition}
\begin{definition} Let $\epsilon>0$ be given and let $(T,B)$ be an $\epsilon$-tower of $S$. Also, assume we are given an ordered partition $\mathscr{P}=\{P_{1},\dots,P_{n}\}$ of $X$. We define the \textbf{pure partition} $\mathscr{A}_{\mathscr{P}}$ of $B$ relative to $\mathscr{P}$ to be the coarsest partition of $B$ such that, for every $A\in \mathscr{A}_{\mathscr{P}}$, each column level in the column over $A$ is contained in some element of $\mathscr{P}$. We also define the $(\mathscr{P},T)$\textbf{-name} of an atom $A\in \mathscr{A}_{\mathscr{P}}$ to be the indexed set $\{i_{t}\}_{t\in T}\subset \{1,\dots,n\}$ such that $S^{t}A\in P_{i_{t}}$.
\end{definition}
\begin{definition}
Let $(T,B)$ be an $\epsilon$-tower of $S$. Let $G$ be a countable discrete group and let $F\subset G$ be such that $|F|=|T|$.  Suppose we are given some finite set $I$ and a bijection $\phi_{i}:T\to F$ for each $i\in I$. Let $\mathscr{A}$ be a partition of $B$ such that $|\mathscr{A}|>|I|$ and let $\sigma:\mathscr{A}\to I$ be a surjection. We define a map $\alpha: T\times B\to F$ in the following way: given $A\in \mathscr{A}$, we set $\alpha(n,x)=\phi_{\sigma(A)}(n)$ for each $x\in A$. We call the map $\alpha$ a \textbf{column-to-tile map} of the tower $(T,B)$ to $F$. We say $\alpha$ is \textbf{pure} with respect to a partition $\mathscr{P}$ of $X$ if $\mathscr{A}$ is a refinement of the pure partition of $B$ relative to $\mathscr{P}$.
\end{definition}

\end{section}

\begin{section}{Actions of $\Z^{2}$ Induced by a Sequence of Column-to-Tile Maps}
Let $(X,\mu)$ be a standard probability space and $\Z\curvearrowright^{S} (X,\mu)$ be an ergodic p.m.p action. We will show how to create an action $\Z^{2}\curvearrowright ^{R}(X,\mu)$ from a sequence of column-to-tile maps of towers of $S$ which satisfy a few requirements. Note that any reference to a generator of $\Z^{2}$ will refer to either element from the pair of conventional generators $(1,0)$ and $(0,1)$ of $\Z^{2}$. \par

 First, suppose we are given a sequence $\{\epsilon_{n}\}_{n\in \N}$ such that $\epsilon_{n}<2^{-n}$ for all $n\in\N$. Assume for every $n\in\N$ we have an $\epsilon_{n}$-tower $(T_{n},B_{n})$ of $S$ such that $\frac{|T_{n}|}{|T_{n+1}|}<\epsilon_{n+1}$ and $T_{n}$ is an interval of nonnegative integers which contains $0$. We will also assume that there is some $N\in\N$ such that $|T_{n}|<2^{n^{3}}$ for all $n>N$. In addition, assume that there is a sequence $\{F_{n}\}_{n\in\N}$ of squares centered at the origin in $\Z^{2}$ such that $|F_{n}|=|T_{n}|$ and $|(F_{n}+F_{n+1})\setminus F_{n+1}|<\epsilon_{n+1}|F_{n+1}|$ for every $n\in\N$. Now, for each $n\in\N$, let $\alpha_{n}$ be a column-to-tile map of $(T_{n},B_{n})$ to $F_{n}\subset \Z^{2}$ with associated partition $\mathscr{A}_{n}$ of $B_{n}$.

   \par

Now, assume our column-to-tile maps satisfy the following properties. For every $n\in\N$, assume there is a finite set $C_{n}\subset \Z^{2}$ such that the tiles $\{F_{n}+c\}_{c\in C_{n}}$ are disjoint and $\bigsqcup_{c\in C_{n}}(F_{n}+c)= F_{n+1}$. We define, for every $n>1$, a function $\ell_{n}: B_{n}\to \{0,1\}^{T_{n}}$ given by 
\begin{align*}
\ell_{n}(b)(m)= \begin{cases}
	1 & \text{if } S^{m}x\in B_{n-1}\\
	0 & \text{if } S^{m}x\notin B_{n-1}
\end{cases}
\end{align*}
for $b\in B_{n}$ and $m\in T_{n}$. We make the assumption that if $A\in\mathscr{A}_{n}$, we have, for every $b,b'\in A$, that $\ell_{n}(b)=\ell_{n}(b')$.
Also, for every $n\in\N$ and a given $A\in\mathscr{A}_{n+1}$, we let $\bar{T}_{n+1}\subset T_{n+1}$ be the largest subset such that $S^{\bar{T}_{n+1}}A\in B_{n}$ and $S^{T_{n}+\bar{T}_{n+1}}A\subset S^{T_{n+1}}B_{n+1}$. So assume that, given $b\in A$, we have a distinct $c\in C_{n}$ for each $i\in \bar{T}_{n+1}$ so that $\alpha_{n+1}(i+j,b)=\alpha_{n}(j,S^{i}b)+c$ for all $j\in T_{n}$. \par

\begin{proposition} If for some $x\in X$ and $i,k\in\N$, $i<k$, we have that $x\in S^{T_{j}}B_{j}$, so that $x=S^{m_{j}}b_{j}$ for some $m_{j}\in T_{j}$ and $b_{j}\in B_{j}$, and there exists a $y\in B_{j}\cap S^{T_{j+1}}B_{j+1}$ such that $x\in S^{T_{j}}y\subset S^{T_{j+1}}B_{j+1}$ whenever $i\leq j\leq k$, then we can write $F_{k}$ as a disjoint union of translates of $F_{i}$ so that there is some $c\in\Z^{2}$ such that $\alpha_{k}(m_{k},b_{k})= \alpha_{i}(m_{i},b_{i})+c$.
\end{proposition}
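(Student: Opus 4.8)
The plan is to prove Proposition 4.1 by induction on $k-i$, using the compatibility hypothesis relating $\alpha_{n+1}$ to $\alpha_n$ as the engine that propagates the conclusion one level at a time.

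First I would set up the base case $k = i+1$. Here we are given $x \in S^{T_i}B_i$ with $x = S^{m_i}b_i$, and $x \in S^{T_{i+1}}B_{i+1}$ with $x = S^{m_{i+1}}b_{i+1}$, together with a point $y \in B_i \cap S^{T_{i+1}}B_{i+1}$ such that $x \in S^{T_i}y \subset S^{T_{i+1}}B_{i+1}$. The point is that $y$ plays the role of the base point of the column of $(T_i,B_i)$ through $x$ that sits inside the tower $(T_{i+1},B_{i+1})$; so $y = b_i$ (or more precisely, $y$ and $b_i$ lie in the same atom of $\mathscr{A}_i$ — I would check which is needed, but the hypothesis $\ell_n$ being constant on atoms and the purity conditions should make $\alpha_i$ well-defined on the relevant atom). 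Writing $y = S^{i_0}b_{i+1}$ for the appropriate $i_0 \in \bar T_{i+1}$, the last displayed assumption in Section 4 gives a distinct $c \in C_i \subset \Z^2$ with $\alpha_{i+1}(i_0 + j, b_{i+1}) = \alpha_i(j, S^{i_0}b_{i+1}) + c = \alpha_i(j,y) + c$ for all $j \in T_i$. Taking $j$ so that $i_0 + j = m_{i+1}$ (equivalently $j = m_i$, since the column structure forces $m_{i+1} = i_0 + m_i$), we get $\alpha_{i+1}(m_{i+1}, b_{i+1}) = \alpha_i(m_i, b_i) + c$. The decomposition $F_{i+1} = \bigsqcup_{c' \in C_i}(F_i + c')$ is exactly the tiling assumption, so $F_{i+1}$ is written as a disjoint union of translates of $F_i$ as required.

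For the inductive step, suppose the statement holds for $k-1$ with the same $i$ (note the hypothesis for the pair $(i, k)$ restricts to the hypothesis for the pair $(i, k-1)$, since the condition "$x \in S^{T_j}y \subset S^{T_{j+1}}B_{j+1}$ whenever $i \le j \le k$" in particular covers $i \le j \le k-1$). So we get $F_{k-1} = \bigsqcup_\ell (F_i + d_\ell)$ and some $c' \in \Z^2$ with $\alpha_{k-1}(m_{k-1}, b_{k-1}) = \alpha_i(m_i,b_i) + c'$. Now apply the base-case argument one more time at level $k-1 \to k$: the hypothesis gives a $y \in B_{k-1} \cap S^{T_k}B_k$ with $x \in S^{T_{k-1}}y \subset S^{T_k}B_k$, so by the Section 4 compatibility assumption there is a distinct $c'' \in C_{k-1}$ with $\alpha_k(m_k, b_k) = \alpha_{k-1}(m_{k-1}, b_{k-1}) + c''$, and by the tiling assumption $F_k = \bigsqcup_{c \in C_{k-1}}(F_{k-1} + c)$. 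Substituting $F_{k-1} = \bigsqcup_\ell(F_i + d_\ell)$ into this expresses $F_k$ as a disjoint union of translates of $F_i$, and setting $c = c' + c''$ gives $\alpha_k(m_k, b_k) = \alpha_i(m_i, b_i) + c$, completing the induction.

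The main obstacle I anticipate is bookkeeping rather than conceptual: one must verify carefully that the base points $b_j$ and the auxiliary points $y$ line up correctly — i.e., that the $y$ supplied at level $j$ really is (in the same $\mathscr{A}_j$-atom as) the base $b_j$ of the column through $x$, and that the indices satisfy $m_{j+1} = i_{0,j} + m_j$ so that the identity $\alpha_{j+1}(i_{0,j}+m_j, b_{j+1}) = \alpha_j(m_j, S^{i_{0,j}}b_{j+1}) + c$ specializes to the value at $x$. This requires unwinding the definitions of $\bar T_{n+1}$, the function $\ell_n$, and the purity of the column-to-tile maps, and checking that $S^{i_{0,j}}b_{j+1}$ lies in the atom of $\mathscr{A}_j$ on which $\alpha_j(\cdot, \cdot)$ agrees with its value at $b_j$ — which is where the assumption that $\ell_n$ (and hence the relevant tower structure) is constant on atoms of $\mathscr{A}_n$ does the work. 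Once that alignment is nailed down, the recursion $c = c' + c''$ and the iterated tiling $F_k = \bigsqcup(F_{k-1}+\cdot) = \bigsqcup(F_i + \cdot)$ are immediate.
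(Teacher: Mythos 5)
Your proposal is correct and is essentially the paper's own argument: both identify $y_j$ with $b_j$, write $b_j = S^{n_{j+1}}b_{j+1}$ with $m_{j+1} = m_j + n_{j+1}$, apply the level-to-level compatibility assumption $\alpha_{j+1}(n_{j+1}+m_j,b_{j+1}) = \alpha_j(m_j,b_j) + c_j$, and accumulate the translations (the paper sets $c = \sum_{j=i}^{k-1}c_j$ and "recursively applies" the one-step identity, which is exactly your induction on $k-i$). The only cosmetic difference is that you make the iterated tiling of $F_k$ by translates of $F_i$ explicit, which the paper leaves implicit.
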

\begin{proof}
Let $x\in X$ and $i,k\in\N$  be given such that $x\in S^{T_{j}}B_{j}$, so that $x=S^{m_{j}}b_{j}$ for some $m_{j}\in T_{j}$ and $b_{j}\in B_{j}$, and there exists a $y_{j}\in B_{j}\cap S^{T_{j+1}}B_{j+1}$ such that $x\in S^{T_{j}}\{y_{j}\}\subset S^{T_{j+1}}B_{j+1}$ whenever $i\leq j\leq k$. So, for every $j\in\{i,\dots,k\}$ there is a $n_{j+1}\in T_{j+1}$ such that $b_{j}=S^{n_{j+1}}b_{j+1}$. Thus, for every $j\in\{i,\dots,k\}$ there is a $c_{j}\in \Z$ so that, by construction, we have that
\begin{align*}
\alpha_{j+1}(m_{j+1},b_{j+1})&=\alpha_{j+1}(m_{j}+n_{j+1},b_{j+1})= \alpha_{j}(m_{j},S^{n_{j+1}}b_{j+1})+c_{j}\\
&=\alpha_{j}(t_{j},b_{j})+ c_{j}.
\end{align*}
If we set $c=\sum_{j=i}^{k-1} c_{j}$, we can see that recursively applying the above formula yields  $\alpha_{k}(m_{k},b_{k})= \alpha_{i}(m_{i},b_{i})+c$.
\end{proof}
 \begin{proposition}
Let $s\in \{(1,0),(0,1)\}$, one of the generators of $\Z^{2}$. For a.e. $x\in X$, there is an $N\in \N$ such that, for all $n\geq N$, the following conditions hold: \begin{enumerate}
\item $x\in S^{T_{n}}B_{n}$,

\item there exists a $y\in B_{n}\cap S^{T_{n+1}}B_{n+1}$ such that $x\in S^{T_{n}}\{y\}\subset S^{T_{n+1}}B_{n+1}$, and,
\item when $x$ is expressed as $x=S^{m_{1}}b$ where $m_{1}\in T_{n}$ and $b\in B_{n}$, we have a $m_{2}\in T_{n}$ such that $\alpha_{n}(m_{2},b)-\alpha_{n}(m_{1},b)=s$.
\end{enumerate}
 \end{proposition}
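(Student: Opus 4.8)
The plan is to establish the three conditions as a sequence of full-measure events, each obtained from a Borel--Cantelli or ergodic-theoretic argument, and then intersect them. Condition (1) is the easiest: since $(T_n,B_n)$ is an $\epsilon_n$-tower with $\epsilon_n<2^{-n}$, we have $\mu(X\setminus S^{T_n}B_n)<2^{-n}$, which is summable, so by Borel--Cantelli almost every $x$ lies in $S^{T_n}B_n$ for all sufficiently large $n$. For condition (2), I would show that the ``bad set'' $E_n=\{x\in S^{T_n}B_n : x\notin S^{T_n}\{y\}\subset S^{T_{n+1}}B_{n+1}\text{ for the }y\in B_n\text{ under }x\}$ has measure controlled by the overlap estimates built into the construction. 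The point is that $E_n$ consists of those $x=S^{m_1}b$ with $b\in B_n$ such that either $b\notin S^{T_{n+1}}B_{n+1}$, or $b=S^{n_{n+1}}b'$ with $b'\in B_{n+1}$ but $S^{T_n+n_{n+1}}\{b'\}\not\subset S^{T_{n+1}}B_{n+1}$; the first alternative has small measure because $B_n$ is mostly covered by $S^{T_{n+1}}B_{n+1}$ (a consequence of $(T_{n+1},B_{n+1})$ being an $\epsilon_{n+1}$-tower together with $|T_n|/|T_{n+1}|<\epsilon_{n+1}$), and the second is exactly governed by the constraint $|(F_n+F_{n+1})\setminus F_{n+1}|<\epsilon_{n+1}|F_{n+1}|$ transported back through the column-to-tile structure, i.e.\ by the definition of $\bar T_{n+1}$. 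Summability of these bounds gives condition (2) for all large $n$, almost everywhere.

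Condition (3) is the substantive one, and here I would argue by contradiction using ergodicity and the approximate-independence/growth hypotheses. Fix the generator $s$. For a level $m_1\in T_n$ and an atom $A\in\mathscr A_n$ corresponding to the column through $x$, the map $\phi_{\sigma(A)}:T_n\to F_n$ is a bijection onto the square $F_n$ (of side roughly $|T_n|^{1/2}$); condition (3) fails for $x=S^{m_1}b$ precisely when $\phi_{\sigma(A)}(m_1)$ lies on the face of $F_n$ from which a step by $s$ exits the square, i.e.\ $\phi_{\sigma(A)}(m_1)+s\notin F_n$. This is a set of relative size $O(|F_n|^{-1/2})$ inside each column, hence a set of measure $O(|T_n|^{-1/2})$ in $X$. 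Since we are told $|T_n|<2^{n^3}$ is not what we want — rather we need a lower bound forcing $|T_n|\to\infty$ fast enough; this follows from $|T_n|/|T_{n+1}|<\epsilon_{n+1}<2^{-(n+1)}$, which makes $|T_n|$ grow at least geometrically, so $|T_n|^{-1/2}$ is summable. Borel--Cantelli then eliminates the boundary-level event for all large $n$, almost everywhere, giving the desired $m_2=\phi_{\sigma(A)}^{-1}(\phi_{\sigma(A)}(m_1)+s)$.

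I would then take the intersection of the three full-measure sets obtained above; on this set, for every $x$ there is an $N$ beyond which (1), (2), and (3) all hold, which is the claim. The main obstacle I anticipate is (2), specifically bookkeeping the nested-tower compatibility: one must check that the event ``the $T_n$-column through $x$ inside $B_n$ embeds, via the $n{+}1$-st column-to-tile data, into a single translate of $F_n$ sitting inside $F_{n+1}$'' really is the complement of something of summable measure, and this requires carefully combining the tower-height ratio bound, the $\epsilon_{n+1}$-tower property of level $n+1$, and the square-overlap bound $|(F_n+F_{n+1})\setminus F_{n+1}|<\epsilon_{n+1}|F_{n+1}|$ — all of which are in the standing hypotheses, but whose interaction through the definition of $\bar T_{n+1}$ and the relation $\alpha_{n+1}(i+j,b)=\alpha_n(j,S^i b)+c$ is where the estimate has to be assembled. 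A secondary subtlety is making sure the geometry of $F_n$ as a \emph{square} is used correctly: the count $O(|F_n|^{-1/2})$ for boundary levels relies on $|F_n|=|T_n|$ and on $F_n$ being a genuine centered square, so that the number of lattice points on the exiting face is comparable to the side length.
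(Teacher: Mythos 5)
Your treatment of conditions (1) and (3) is sound and matches the paper's argument in spirit: the paper also runs a single Borel--Cantelli argument over bad sets $E_k$, and your boundary-face count for (3) (a fraction $\sqrt{|F_n|}/|F_n|=|T_n|^{-1/2}$ of each column, summable because $|T_n|$ grows at least geometrically from $|T_n|/|T_{n+1}|<\epsilon_{n+1}$) is in fact a more explicit justification than the paper's bare assertion that this set has measure $<\epsilon_k$. The problem is condition (2), precisely the step you flag as the anticipated obstacle: your decomposition conditions on the base point $b\in B_n$ under $x$, and neither of your two bounds works as stated. For the first alternative, knowing $\mu(X\setminus S^{T_{n+1}}B_{n+1})<\epsilon_{n+1}$ is an \emph{absolute} bound; it does not make $B_n$ ``mostly covered'' in the relative sense you need, since $\mu(B_n)\approx 1/|T_n|$ can be far smaller than $\epsilon_{n+1}$ (nothing in the hypotheses rules out a sizable portion of $B_n$ lying off the $(n+1)$-tower). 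Worse, the set of $x$ you must control is the union of the full $T_n$-columns over such bases, so the natural bound is $|T_n|\,\mu(B_n\setminus S^{T_{n+1}}B_{n+1})\le |T_n|\,\epsilon_{n+1}$, which is not summable under the stated assumptions. For the second alternative, the relevant mechanism is not the square-overlap hypothesis $|(F_n+F_{n+1})\setminus F_{n+1}|<\epsilon_{n+1}|F_{n+1}|$ at all: whether the $T_n$-column through $b$ fits inside the $(n+1)$-tower is a one-dimensional height question, and the overlap condition on the $\Z^2$ tiles plays no role in it.

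The repair, which is exactly what the paper does, is to phrase the bad event in terms of the position of $x$ itself rather than of its base: besides $X\setminus S^{T_n}B_n$ and $X\setminus S^{T_{n+1}}B_{n+1}$, the only additional bad set needed is the union $S_{n+1}$ of the top and bottom $|T_n|-1$ levels of the tower $(T_{n+1},B_{n+1})$, whose measure is at most $2|T_n|/|T_{n+1}|<2\epsilon_{n+1}$. If $x$ lies in both towers and avoids $S_{n+1}$, then writing $x=S^m b=S^j b'$ with $m\in T_n$, $b\in B_n$, $j\in T_{n+1}$, $b'\in B_{n+1}$, the inequalities $|T_n|-1\le j\le |T_{n+1}|-|T_n|$ force $b=S^{j-m}b'$ to lie in the $(n+1)$-tower and $S^{T_n}\{b\}=S^{T_n+j-m}\{b'\}\subset S^{T_{n+1}}B_{n+1}$, which is condition (2) with $y=b$. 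All four bad sets then have summable measure and Borel--Cantelli finishes the proof. So the architecture of your proposal is right, but as written the estimate for (2) has a genuine gap and invokes the wrong hypothesis; it needs to be reorganized along the lines above.
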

 \begin{proof}
 Let $s\in\Z^{2}$ be a given generator, and let $\mathscr{A}_{k}$ be the partition given above for every $k\in\N$. Let $S_{k}$ be the union of the top $|T_{k-1}|-1$ and bottom $|T_{k-1}|-1$ tower levels of $(T_{k},B_{k})$. Set \begin{align*}
 E_{k}=(X\setminus S^{T_{k}}B_{k})\cup (X\setminus S^{T_{k-1}}B_{k-1})\cup \{S^{t}b\in S^{T_{k}}B_{k}\mid s+\alpha_{k}(t,b)\notin F_{k}\}\cup S_{k}.
\end{align*} 

The proof follows if we can show that $\mu(\bigcap_{n=1}^{\infty}\bigcup_{k=n}^{\infty}E_{k})=0$. This can be accomplished by showing that $\sum \mu(E_{k})<\infty$ and then applying the Borel-Cantelli Lemma. First, since $S^{T_{n}}B_{n}$ is an $\epsilon_{n}$-tower for every $n\in\N$, we have that $\mu(X\setminus S^{T_{k}}B_{k})<\epsilon_{k}$. Similarly, we have $\mu(X\setminus S^{T_{k-1}}B_{k-1})<\epsilon_{k-1}$. Also, by construction, we have that $\mu(S_{k})<\frac{2|T_{k-1}|}{|T_{k}|}<2\epsilon_{k}$ and $\mu(\{S^{t}b\in S^{T_{k}}B_{k}\mid s+\alpha_{k}(t,b)\notin F_{k}\})<\epsilon_{k}$. So, we have that $\mu(E_{k})< 4\epsilon_{k}+\epsilon_{k-1}$ so that $\sum \mu(E_{k})<\infty$ since we chose $\{\epsilon_{k}\}_{k\in\N}$ to be summable.
 \end{proof}
 
Let $s\in\Z^{2}$ be a standard generator. For all $n\in\N$, we can define a sequence of partial maps $\sigma_{s,n}: S^{T_{n}}B_{n}\to S^{T_{n}}B_{n}$ in the following way: given some $t\in T_{n}$ and $b\in B_{n}$ such that there is some $t'\in T_{n}$ such that $\alpha_{n}(t',b)-\alpha_{n}(t,b)= s$, set $\sigma_{s,n}(S^{t}b)=S^{t'}b$.
 
\begin{proposition}
 Let $s\in\Z^{2}$ be a standard generator and $x\in X$. There is an $N\in\N$ such that, for all $n_{1},n_{2}>N$, we have that $x\in S^{T_{n_{1}}}B_{n_{1}}\cap S^{T_{n_{2}}}B_{n_{2}}$ and $\sigma_{s,n_{1}}(x)=\sigma_{s,n_{2}}(x)$.
\end{proposition}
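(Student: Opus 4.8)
The plan is to reduce the claim to the stability of the partial maps $\sigma_{s,n}$ between consecutive indices, and then to establish that stability from the compatibility relations built into the $\alpha_n$. First I would invoke the preceding proposition (the one proved via Borel--Cantelli): for a.e.\ $x\in X$ there is an $N_0\in\N$ so that conditions (1)--(3) hold at $x$ for every $n\ge N_0$. Condition (1) already gives $x\in S^{T_{n_1}}B_{n_1}\cap S^{T_{n_2}}B_{n_2}$ whenever $n_1,n_2\ge N_0$, and condition (3) guarantees that $\sigma_{s,n}$ is defined at $x$ for all $n\ge N_0$. It therefore suffices to prove $\sigma_{s,n}(x)=\sigma_{s,n+1}(x)$ for every $n\ge N_0$, since a finite chain of such equalities yields $\sigma_{s,n_1}(x)=\sigma_{s,n_2}(x)$ for all $n_1,n_2>N:=N_0-1$.

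So fix $n\ge N_0$ and write $x=S^{m_1}b$ with $m_1\in T_n$ and $b\in B_n$. Using condition (2) together with the disjointness of the levels of $(T_n,B_n)$, I would identify $b$ with the witnessing point $y\in B_n\cap S^{T_{n+1}}B_{n+1}$: writing $x=S^{t}y$ with $t\in T_n$ and comparing with $x=S^{m_1}b$ forces $t=m_1$ and $y=b$. Hence $b=S^{i}b'$ for some $b'\in B_{n+1}$ and some $i\in T_{n+1}$, and $S^{T_n+i}\{b'\}=S^{T_n}\{b\}\subseteq S^{T_{n+1}}B_{n+1}$. Since each $T_k$ is an interval of nonnegative integers containing $0$, this forces $T_n+i\subseteq T_{n+1}$; combining this with the $\ell_{n+1}$-homogeneity of the atom $A\in\mathscr{A}_{n+1}$ containing $b'$ (which gives $S^iA\subseteq B_n$) yields $i\in\bar{T}_{n+1}$. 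In particular $x=S^{m_1+i}b'$ with $m_1+i\in T_{n+1}$ is the canonical representation of $x$ in $(T_{n+1},B_{n+1})$, so $\sigma_{s,n+1}(x)$ is computed from $b'$ and the level $m_1+i$.

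Next I would transfer condition (3) up one level. Let $m_2\in T_n$ be the index with $\alpha_n(m_2,b)-\alpha_n(m_1,b)=s$; it is unique because $\alpha_n(\cdot,b)$ is a bijection of $T_n$ onto $F_n$, and $\sigma_{s,n}(x)=S^{m_2}b$. Since $i\in\bar{T}_{n+1}$, the construction supplies a single $c=c(i)\in C_n$ with $\alpha_{n+1}(i+j,b')=\alpha_n(j,S^ib')+c=\alpha_n(j,b)+c$ for all $j\in T_n$. Applying this with $j=m_1$ and $j=m_2$ and subtracting, the translates $c$ cancel and $\alpha_{n+1}(i+m_2,b')-\alpha_{n+1}(i+m_1,b')=s$, with $i+m_1,i+m_2\in T_{n+1}$. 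As $\alpha_{n+1}(\cdot,b')$ is again a bijection of $T_{n+1}$ onto $F_{n+1}$, the index computing $\sigma_{s,n+1}(x)$ must be exactly $i+m_2$, so that $\sigma_{s,n+1}(x)=S^{i+m_2}b'=S^{m_2}(S^ib')=S^{m_2}b=\sigma_{s,n}(x)$. This closes the induction and completes the proof.

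I expect the main obstacle to be the bookkeeping of the second paragraph: one has to promote the pointwise information supplied by condition (2) to the atom-level statement $i\in\bar{T}_{n+1}$, so that the compatibility identity relating $\alpha_{n+1}$ and $\alpha_n$ becomes available, and simultaneously check that the shifted indices $i+m_1$ and $i+m_2$ genuinely lie in $T_{n+1}$. This is where the hypotheses that each $T_k$ is an interval of nonnegative integers containing $0$ and that $\ell_{n+1}$ is constant on the atoms of $\mathscr{A}_{n+1}$ get used. Once those points are settled, the remainder is the short computation with the relation $\alpha_{n+1}(i+j,b')=\alpha_n(j,S^ib')+c$ sketched above.
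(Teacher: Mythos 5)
Your argument is correct and takes essentially the same route as the paper: both rest on the full-measure conditions from Proposition 4.2, the compatibility relation $\alpha_{n+1}(i+j,b')=\alpha_{n}(j,S^{i}b')+c$, cancellation of the translate $c$, and injectivity of the column-to-tile map on each column. The only difference is organizational — the paper compares $n_{1}<n_{2}$ in one step by citing Proposition 4.1 (the iterated form of the compatibility relation), whereas you re-derive the one-stage case and chain over consecutive indices; your explicit promotion of the pointwise data from condition (2) to the atom-level statement $i\in\bar{T}_{n+1}$ is a detail the paper leaves implicit inside the proof of Proposition 4.1.
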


\begin{proof} Let a standard generator $s\in\Z^{2}$ and $x\in X$ be given, and let $N\in\N$ be given so that the property from Proposition 4.2 holds. Let $n_{2}>n_{1}>N$ be given. So, we have $t_{1}\in T_{n_{1}}$, $b_{1}\in B_{n_{1}}$, $t_{2}\in T_{n_{2}}$ ,and $b_{2}\in B_{n_{2}}$ such that $x=S^{t_{1}}b_{1}=S^{t_{2}}b_{2}$, and we have $t_{1}'\in T_{n_{1}}$ and $t_{2}'\in T_{n_{2}}$ such that $\alpha_{n_{1}}(t_{1}',b_{1})=s+\alpha_{n_{1}}(t_{1},b_{1})$ and $\alpha_{n_{2}}(t_{2}',b_{2})=s+\alpha_{n_{2}}(t_{2},b_{2})$. We must show that $S^{t_{1}'}b_{1}=S^{t_{2}'}b_{2}$. Since $S^{t_{1}}b_{1}=x\in S^{T_{n_{2}}}B_{n_{2}}$, we can find a $t_{3}\in T_{n_{2}}$ such that $S^{t_{1}'}b_{1}=S^{t_{3}}b_{2}$. By Proposition 4.1, we have that $F_{n_{2}}$ can be written as a disjoint union of translates of $F_{n_{1}}$ so that there is some $c\in\Z^{2}$ such that \begin{equation}\alpha_{n_{2}}(t_{3},b_{2})=\alpha_{n_{1}}(t_{1}',b_{1})+c
\end{equation}
and 
\begin{equation}
\alpha_{n_{2}}(t_{2},b_{2})=\alpha_{n_{1}}(t_{1},b_{1})+c
\end{equation}
so that
\begin{align*}
\alpha_{n_{2}}(t_{2}',b_{2})&=\alpha_{n_{1}}(t_{1}',b_{1})+\alpha_{n_{2}}(t_{2},b_{2})-\alpha_{n_{1}}(t_{1},b_{1})\\
&\stackrel{(2)}{=}\alpha_{n_{1}}(t_{1}',b_{1})+c\\
&\stackrel{(1)}{=}\alpha_{n_{2}}(t_{3},b_{2}).
\end{align*}
Since $\alpha_{n_{2}}(\cdot,b_{2})$ is a bijection, we have that $t_{2}'=t_{3}$ so that $S^{t_{2}'}b_{2}=S^{t_{3}}b_{2}=S^{t_{1}'}b_{1}$.
\end{proof}
Now, we are ready to define the action $\Z^{2}\curvearrowright X$. Let $s\in\Z^{2}$ be a standard generator and $x\in X$. Thus, we can find an $N\in\N$ such that, for all $n,m>N$, we have that $x\in S^{T_{m}}B_{m}\cap S^{T_{n}}B_{n}$ and $\sigma_{s,m}(x)=\sigma_{s,n}(x)$. So, the action will be defined by setting $sx=\sigma_{s,n}(x)$ for some $n>N$.

\begin{proposition}
The action of $\Z^{2}\curvearrowright^{R} X$ defined above is orbit equivalent to $\Z\curvearrowright^{S} X$. 
\end{proposition}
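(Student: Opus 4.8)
The plan is to verify that the identity map on $X$ implements the orbit equivalence, i.e. that $O_{R}(x)=O_{S}(x)$ for a.e. $x$; the incidental facts that $R$ is a free p.m.p. $\Z^{2}$-action come out along the way. That $R$ is measure preserving is clear, since on its domain each partial map $\sigma_{s,n}$ is level-by-level a power of $S$ (it carries $S^{t}A$ to $S^{t'}A$), hence so are the a.e. limits $R^{s}=\lim_{n}\sigma_{s,n}$ furnished by Proposition 4.3; $R^{-s}$ inverts $R^{s}$, the maps $R^{(1,0)}$ and $R^{(0,1)}$ commute because $\alpha_{n}(\cdot,b)\colon T_{n}\to F_{n}$ is a bijection and (by the Borel-Cantelli estimate in the proof of Proposition 4.2) the relevant $\Z^{2}$-translates stay inside $F_{n}$ for large $n$, and $R$ is free because $R^{g}x=S^{m'}b$ with $m'\neq m$ when $g\neq 0$.

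The crux is the claim $(\star)$: for a.e. $x$ and every fixed $g\in\Z^{2}$ there is an $N$ such that for all $n\geq N$, writing $x=S^{m}b$ with $m\in T_{n}$ and $b\in B_{n}$, one has $\alpha_{n}(m,b)+g\in F_{n}$ and $R^{g}x=S^{m'}b$ for the unique $m'\in T_{n}$ with $\alpha_{n}(m',b)=\alpha_{n}(m,b)+g$. I would prove $(\star)$ by induction on the word length of $g$ in the standard generators. The case of length at most $1$ is exactly Propositions 4.2 and 4.3: condition (3) of Proposition 4.2 gives $\alpha_{n}(m,b)+s\in F_{n}$ and produces the level $m'$, while Proposition 4.3 identifies $R^{s}x$ with $\sigma_{s,n}(x)=S^{m'}b$ for all large $n$. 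For the inductive step, write $g=s+g'$ with $s$ a generator and $g'$ of smaller word length; by the inductive hypothesis $R^{g'}x=S^{m''}b$ with $\alpha_{n}(m'',b)=\alpha_{n}(m,b)+g'$, and then, applying Propositions 4.2 and 4.3 at the point $R^{g'}x$ — legitimate for a.e. $x$ because $R^{g'}$ is measure preserving and so sends the relevant null sets to null sets — one gets $R^{g}x=R^{s}(R^{g'}x)=S^{m'}b$ with $\alpha_{n}(m',b)=\alpha_{n}(m'',b)+s=\alpha_{n}(m,b)+g$. From $(\star)$ the inclusion $O_{R}(x)\subseteq O_{S}(x)$ is immediate, since each $R^{g}x$ lies in the column of $x$ in tower $n$, hence in $O_{S}(x)$.

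For the reverse inclusion it is enough to show $Sx\in O_{R}(x)$ and $S^{-1}x\in O_{R}(x)$ for a.e. $x$: then, since $O_{R}(x)$ is countable, for a.e. $x$ the point $R^{g}x$ has the same property for every $g$, so $S^{\pm1}(O_{R}(x))\subseteq O_{R}(x)$, the set $O_{R}(x)$ is $S$-invariant, and hence $O_{S}(x)\subseteq O_{R}(x)$. To obtain $Sx\in O_{R}(x)$, set $g_{n}:=\alpha_{n}(m_{n}+1,b_{n})-\alpha_{n}(m_{n},b_{n})$, where $x=S^{m_{n}}b_{n}$ with $m_{n},m_{n}+1\in T_{n}$ and $b_{n}\in B_{n}$, valid for all large $n$ away from a null set (as in the proof of Proposition 4.2). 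Using condition (2) of Proposition 4.2 to present, for large $n$, the tower-$n$ column of $x$ as a sub-column of the tower-$(n+1)$ column of $x$ on which the compatibility relation $\alpha_{n+1}(i+j,b)=\alpha_{n}(j,S^{i}b)+c$ holds, that relation forces $g_{n+1}=g_{n}$, so $g_{n}$ stabilizes to some $g_{\infty}$; applying $(\star)$ with $g=g_{\infty}$ gives $R^{g_{\infty}}x=S^{m_{n}+1}b_{n}=Sx$, and the case of $S^{-1}$ is identical with $m_{n}-1$ in place of $m_{n}+1$. Combining the two inclusions yields $O_{R}(x)=O_{S}(x)$ a.e., so $\mathrm{id}_{X}$ is the desired orbit equivalence.

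I expect the main obstacle to be the combination of $(\star)$ with the stabilization of $g_{n}$: proving $(\star)$ requires that, for a.e. $x$, Propositions 4.2 and 4.3 remain simultaneously applicable at all the shifted points $R^{g'}x$ and that all the needed $\Z^{2}$-translates stay inside the squares $F_{n}$, and establishing $g_{n+1}=g_{n}$ is exactly the step where the structural compatibility between $\alpha_{n+1}$ and $\alpha_{n}$ is used. Everything else is bookkeeping with the summable error sequence $\{\epsilon_{n}\}$.
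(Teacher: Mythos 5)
Your proposal is correct and follows essentially the same route as the paper: both directions rest on Propositions 4.1--4.3, showing on one hand that $R$-moves stay inside the $S$-column of $x$ (so $O_{R}(x)\subseteq O_{S}(x)$), and on the other that the stabilized difference $g_{n}=\alpha_{n}(m_{n}+1,b_{n})-\alpha_{n}(m_{n},b_{n})$ realizes $S^{\pm1}x$ as $R^{g_{\infty}}x$ (so $O_{S}(x)\subseteq O_{R}(x)$). Your version merely fills in details the paper leaves terse, namely the induction on word length for general $g\in\Z^{2}$ and the countability/null-set bookkeeping giving $S$-invariance of $O_{R}(x)$ almost everywhere.
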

\begin{proof}
Let $x\in X$ be given such that there is an $N\in\N$ such that for all $m,n>N$ we have that $\sigma_{s,m}(x)=\sigma_{s,n}(x)$ for any standard generator $s\in\Z^{2} $. So, for all $m,n>N$, there is a $t_{m}\in T_{m}$, a $t_{n}\in T_{n}$, a $b_{m}\in B_{m}$ and a $b_{n}\in B_{n}$ such that $x=S^{t_{m}}b_{m}=S^{t_{n}}b_{n}$ and  $g,h\in\Z^{2}$ such that $g=\alpha_{n}(1+t_{n},b_{n})-\alpha_{n}(t_{n},b_{n})= \alpha_{m}(1+t_{m},b_{m})-\alpha_{m}(t_{m},b_{m})$ and $h=\alpha_{n}(-1+t_{n},b_{n})-\alpha_{n}(t_{n},b_{n})= \alpha_{m}(-1+t_{m},b_{m})-\alpha_{m}(t_{m},b_{m})$. So, $R^{g}x=S^{1}x$ and $R^{h}x=S^{-1}x$. This means that for all $\ell\in\Z$, we have that there is a $v\in \Z^{2}$ such that $R^{v}x=S^{\ell}x$. Now, suppose for some $w\in \Z^{2}$ that $R^{w}x=S^{\ell}x$ also. So, for some $N\in\N$, we have that, for all $m,n>N$, there is a $t_{m}\in T_{m}$, a $t_{n}\in T_{n}$, a $b_{m}\in B_{m}$ and a $b_{n}\in B_{n}$ such that $x=S^{t_{m}}b_{m}=S^{t_{n}}b_{n}$ such that $v=\alpha_{n}(1+t_{n},b_{n})-\alpha_{n}(t_{n},b_{n})= \alpha_{m}(1+t_{m},b_{m})-\alpha_{m}(t_{m},b_{m})=w$. \par
Similarly, for a standard generator $s\in\Z^{2}$, we can find an $N\in\N$ where, for every $n>N$ we have a $v\in\Z$ so that, whenever $x=S^{t}b\in S^{T_{n}}B_{n}$, we have a $t'\in T_{n}$ such that $t-t'=v$ and $\alpha_{n}(t',b)-\alpha_{n}(t,b) =s$. Thus, $R^{s}x=S^{v}x$. The fact that we bijectively map onto the orbit of $x$ follows similarly to the first case.
\end{proof}
Let $\lambda: X\times \Z\to \Z^{2} $ and $\kappa: X\times \Z^{2}\to \Z$ be the cocycles induced by the orbit equivalence between $\Z\curvearrowright^{S} (X,\mu)$ and $\Z^{2}\curvearrowright^{R} (X,\mu)$. Let $x\in X$. We can express $\lambda$ in the following way: given an $n\in\Z$, we can find an $N\in\N$ such that for all $k>N$ we have that $x\in S^{T_{k}}B_{k}$ and, when $x$ is expressed as $x=S^{t}b$ where $t\in T_{k}$ and $b\in B_{k}$, we have that $n+t\in T_{k}$ and $\lambda(x,n)=\alpha_{k}(n+t,b)-\alpha_{k}(t,b)$. For $\kappa$, given a $g\in\Z^{2}$, we can find an $N\in\N$ such that $x\in S^{T_{N}}B_{N}$ and, when $x$ is expressed as $x=S^{t}b$ where $t\in T_{N}$ and $b\in B_{N}$, we have that there is a $t'\in T_{N}$ such that $\alpha_{N}(t',b)-\alpha_{N}(t,b)=g$, so we get that $\kappa(x,g)=t'-t$.\par It will be useful for us later to have an estimate on the measure of the set of points whose $\lambda$ cocycle value for some element in $\Z$ is fixed at a given stage in the construction. By saying that a point $x\in X$ has its $\lambda$ cocycle value fixed for $k\in\Z$ at stage $N$, we mean that $N$ is the smallest natural number such that, for all $n\geq N$, there exists a $t\in T_{n}$ and a $b\in B_{n}$ such that $x=S^{t}b$ and $\lambda(x,k)=\alpha_{n}(k+t,b)-\alpha_{n}(t,b)$.
\begin{lemma} Let $k\in\Z$ be given. Let $D^{k}_{n}$ be the set of points whose $\lambda$ cocycle value for $k$ is fixed at stage $n$. Then, for $n\geq 2$, we have that
\begin{align*}
\mu(D_{n}^{k})< \epsilon_{n-1}+3\sum_{i=n}^{\infty}\epsilon_{i}+k\sum_{i=n}^{\infty}\frac{1}{|T_{i-1}|}
\end{align*}
\end{lemma}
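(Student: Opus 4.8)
The plan is to bound $\mu(D_n^k)$ by the measure of the set of points for which the stage-$n$ tower data is not yet "stable enough" to pin down the cocycle value $\lambda(x,k)$, and then to estimate each contributing bad event using the parameters $\epsilon_i$ and $|T_i|$ already built into the construction. Concretely, a point $x$ has its $\lambda$-value for $k$ fixed at stage $n$ (and not earlier) only if at stage $n-1$ at least one of the following failed: $x \in S^{T_{n-1}}B_{n-1}$; $x$ sits low enough in the tower $(T_{n-1},B_{n-1})$ that the translate by $k$ stays inside $T_{n-1}$ (this is where the $\frac{k}{|T_{n-1}|}$-type term enters, via the top $k$ levels of the tower); and the compatibility condition $x \in S^{T_{n-1}}\{y\} \subset S^{T_n}B_n$ from Proposition 4.2(2) that lets Proposition 4.1 transfer the cocycle value forward. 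So I would first write $D_n^k$ as contained in a union of such bad sets indexed over $n-1$, together with tail versions for all later stages $i \ge n$ (since "fixed at stage $n$" requires stability for \emph{all} $i \ge n$, matching the $\sum_{i=n}^\infty$ in the bound, but the dominant single-stage failure happens at $n-1$, matching the lone $\epsilon_{n-1}$).

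The key steps, in order, are: (1) Unpack the definition: if $x \notin D_j^k$ for some $j < n$, then the stage-$j$ representation $x = S^t b$, $t \in T_j$, $b \in B_j$ already computes $\lambda(x,k) = \alpha_j(k+t,b)-\alpha_j(t,b)$, and by Proposition 4.1 this value persists to every later stage provided the chain of inclusions $x \in S^{T_i}\{y_i\} \subset S^{T_{i+1}}B_{i+1}$ holds for all $i \ge j$. Hence $D_n^k$ is contained in the set of $x$ for which either $x \notin S^{T_{n-1}}B_{n-1}$, or $k + t \notin T_{n-1}$ when $x = S^t b$ at stage $n-1$ (i.e. $x$ lies in the top $k$ levels of $(T_{n-1},B_{n-1})$), or the inclusion chain fails at some stage $i \ge n-1$. (2) Estimate each piece: $\mu(X \setminus S^{T_{n-1}}B_{n-1}) < \epsilon_{n-1}$ by the $\epsilon_{n-1}$-tower property; the top-$k$-levels set has measure at most $k \cdot \mu(B_{n-1}) \le k/|T_{n-1}|$ (using that the $|T_{n-1}|$ translates of $B_{n-1}$ are disjoint, so $\mu(B_{n-1}) \le 1/|T_{n-1}|$), and summing the analogous contributions over later stages gives $k \sum_{i=n-1}^\infty \frac{1}{|T_i|}$ — I would need to check whether the paper's $\sum_{i=n}^\infty \frac{1}{|T_{i-1}|}$ is exactly this reindexed sum (it is: $i \mapsto i-1$). (3) Estimate the failure of the inclusion chain at stage $i$: this is the event that $x \in S^{T_i}B_i$ but $x \notin S^{T_i}\{y\}$ for any $y \in B_i \cap S^{T_{i+1}}B_{i+1}$ with the required inclusion; as in the proof of Proposition 4.2 this is controlled by $\mu(X \setminus S^{T_{i+1}}B_{i+1}) < \epsilon_{i+1}$, by the measure of the top/bottom $|T_i|-1$ levels of $(T_{i+1},B_{i+1})$ which is $< 2|T_i|/|T_{i+1}| < 2\epsilon_{i+1}$, plus the $\epsilon_i$-tower defect of $(T_i,B_i)$ — altogether $O(\epsilon_i + \epsilon_{i+1})$ per stage, summing to $3\sum_{i=n}^\infty \epsilon_i$ after collecting constants. (4) Add the three bounds to obtain the stated inequality.

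The main obstacle I anticipate is the bookkeeping in step (3): making the "inclusion chain fails at some stage $i \ge n-1$" event precise and matching it exactly to the $3\sum_{i=n}^\infty \epsilon_i$ coefficient, because the relevant bad set at stage $i$ is essentially the set $E_{i+1} \setminus S_{i+1}$-type set from the proof of Proposition 4.2 (the part involving $X \setminus S^{T_{i+1}}B_{i+1}$ and the extremal tower levels), and one has to be careful that the events at stage $i$ and stage $i+1$ are not double-counted and that the constant $3$ (rather than $4$, as in Proposition 4.2's $4\epsilon_k$) is correct once the $k$-levels contribution has been split off into its own sum. A secondary subtlety is confirming that failure of being fixed at stage $n$ genuinely reduces to a \emph{single} failure at stage $n-1$ plus \emph{tail} failures at stages $\ge n$, rather than requiring failures at all intermediate stages — this follows because the cocycle value, once pinned down at any stage $j \le n-1$, propagates forward by Proposition 4.1 as long as the inclusion chain holds, so the only way to \emph{not} be fixed by stage $n$ is for stage $n-1$ itself to be defective or for some later-stage inclusion to break the propagation.
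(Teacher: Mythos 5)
Your proposal is correct in substance, but it is organized differently from the paper's proof, and the comparison is worth recording. The paper argues via a two-sided estimate: it shows $\mu(D_n^k)\le \mu(S^{T_n}B_n)-\sum_{i=1}^{n-1}\mu(D_i^k)$ (from the disjointness of the sets $D_i^k$ inside the stage-$n$ tower) together with a lower bound for $\mu(D_n^k)$ obtained from the characterization of membership (the point must lie in all later towers, satisfy $t\in T_i\cap(-k+T_i)$, and avoid the top/bottom $|T_i|$ levels of each $(T_{i+1},B_{i+1})$); it then substitutes the stage-$(n-1)$ lower bound into the stage-$n$ upper bound, the partial sums $\sum_{i\le n-2}\mu(D_i^k)$ cancel, and the lone $\epsilon_{n-1}$ comes from $\mu(S^{T_n}B_n)-\mu(S^{T_{n-1}}B_{n-1})<\epsilon_{n-1}$. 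You instead cover $D_n^k$ directly by the failure events at stage $n-1$: being outside $S^{T_{n-1}}B_{n-1}$ (measure $<\epsilon_{n-1}$), lying in the top $k$ levels of that tower (measure $\le k/|T_{n-1}|$), or breakdown of the column-compatibility that propagates the stage-$(n-1)$ value forward via Proposition 4.1 (controlled by later tower defects and extremal levels, giving the $3\sum_{i\ge n}\epsilon_i$). The underlying bad events and their measure estimates are identical to those in the paper's lower-bound step, so nothing new is needed; what your version buys is directness — it collapses the paper's bootstrap into a single union bound, and in fact, since $x\in D_n^k$ already lies in every tower of stage $\ge n$ with the correct value there, only the $n-1\to n$ propagation can fail, so a careful execution even yields the slightly sharper bound $\epsilon_{n-1}+k/|T_{n-1}|+3\epsilon_n$, of which the stated inequality is a weakening. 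What the paper's version buys is that it never has to argue that the failure localizes at stage $n-1$: that localization comes for free from the disjointness bookkeeping. The two cautions you flag are real but resolve as you anticipate: first, ``fixed at stage $n$'' means the defining condition holds for all $m\ge n$ and therefore fails exactly at $m=n-1$, which is what justifies your containment; second, in the per-stage accounting you must not add the $\epsilon_i$-tower defect of $(T_i,B_i)$ for $i\ge n$ (those events are vacuous for points of $D_n^k$), and dropping them is precisely what produces the coefficient $3$ rather than $4$.
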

\begin{proof}
We only need to show that the following bounds for $\mu(D_{n}^{k})$ for all $n\geq 2$:
\begin{align*}
\mu(S^{T_{n}}B_{n})-\sum_{i=1}^{n-1}\mu(D_{i}^{k})-3\sum_{i=n}^{\infty}\epsilon_{i+1}-k\sum_{i=n}^{\infty}\frac{1}{|T_{i}|}\leq \mu(D_{n}^{k})\leq\mu(S^{T_{n}}B_{n})-\sum_{i=1}^{n-1}\mu(D_{i}^{k})
\end{align*}
Indeed, assuming these hold we have
\begin{align*}
\mu(D_{n}^{k})&\leq\mu(S^{T_{n}}B_{n})-\sum_{i=1}^{n-1}\mu(D_{i}^{k})\\
&\leq \mu(S^{T_{n}}B_{n})-\bigg(\mu(S^{T_{n-1}}B_{n-1})-\sum_{i=1}^{n-2}\mu(D_{i}^{k})-3\sum_{i=n-1}^{\infty}\epsilon_{i+1}-k\sum_{i=n-1}^{\infty}\frac{1}{|T_{i}|}\bigg)-\sum_{i=1}^{n-2}\mu(D_{i}^{k})\\
&\leq  \mu(S^{T_{n}}B_{n})-\mu(S^{T_{n-1}}B_{n-1})+3\sum_{i=n-1}^{\infty}\epsilon_{i+1}+k\sum_{i=n-1}^{\infty}\frac{1}{|T_{i}|}\\
&<1-(1-\epsilon_{n-1})+3\sum_{i=n-1}^{\infty}\epsilon_{i+1}+k\sum_{i=n-1}^{\infty}\frac{1}{|T_{i}|}\\
&=\epsilon_{n-1}+3\sum_{i=n}^{\infty}\epsilon_{i}+k\sum_{i=n}^{\infty}\frac{1}{|T_{i-1}|}.
\end{align*}
First, the upper bound simply follows from the fact that $\bigsqcup_{i=1}^{n}D^{k}_{i}\subset S^{T_{n}}B_{n} $.
For the lower bound, note that a point $x\in S^{T_{n}}B_{n}$, which we represent as $x=S^{t}b$ where $t\in T_{n}$ and $b\in B_{n}$, is an element of  $D_{n}^{k}$ if and only if the following conditions hold:
\begin{enumerate}
\item $x\in S^{T_{i}}B_{i}$ for every $i\geq n$
\item $t\in T_{i}\cap (-k+T_{i})$ for every $i\geq n$
\item $x$ is not in the top or bottom $|T_{i}|$ levels of the tower $(T_{i+1},B_{i+1})$ for every $i\geq n$.
\item $x\notin \bigcup_{i=1}^{n-1}D^{k}_{i}$
\end{enumerate}
for every $n\in\N$.
Thus, for $n>1$,
\begin{align*}
\mu(D_{n}^{k})&> \mu(S^{T_{n}}B_{n})-\sum_{i-1}^{n-1}\mu(D^{k}_{i})-\sum_{i=n}^{\infty}\mu(X\setminus S^{T_{i+1}}B_{i+1})\\
&\text{ }\text{ }\text{ }-\sum_{i=n}^{\infty}k \frac{\mu(S^{T_{n}}B_{n})}{|T_{n}|}-\sum_{i=n}^{\infty} \frac{2|T_{n}|\mu(S^{T_{n+1}}B_{n+1})}{|T_{n+1}|}\\
&>\mu(S^{T_{n}}B_{n})-\sum_{i-1}^{n-1}\mu(D^{k}_{i})-\sum_{i=n}^{\infty}\epsilon_{n+1}-k\sum_{i=n}^{\infty}\frac{1}{|T_{n}|}-\sum_{n=1}^{\infty}2\epsilon_{n+1}\\
&=\mu(S^{T_{n}}B_{n})-\sum_{i-1}^{n-1}\mu(D^{k}_{i})-3\sum_{i=n}^{\infty}\epsilon_{n+1}-k\sum_{i=n}^{\infty}\frac{1}{|T_{n}|}.
\end{align*}

\end{proof}
Now, we will show a similar result for the set of points whose $\kappa$ cocycle value for a given element of $\Z^{2}$ is fixed at a given stage $n$. 
\begin{lemma} Let $g\in\Z^{2}$ be given. Let $E^{g}_{n}$ be the set of points whose $\kappa$ cocycle value for $g$ is fixed at stage $n$. Then, for $n\geq 2$, we have that
\begin{align*}
\mu(E_{n}^{g})< \epsilon_{n-1}+3\sum_{i=n}^{\infty}\epsilon_{i}+\sum_{i=n}^{\infty}\frac{|F_{i-1}\setminus g+F_{i-1}|}{|F_{i-1}|}
\end{align*}
\end{lemma}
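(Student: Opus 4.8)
The plan is to transcribe the proof of Lemma 4.6 essentially verbatim, replacing the condition ``$t+k\in T_i$'' that governs when $\lambda(x,k)$ stabilises by the range condition ``$g+\alpha_i(t,b)\in F_i$'' that governs when $\kappa(x,g)$ stabilises. As in that proof, I will first establish the two-sided estimate
\begin{align*}
\mu(S^{T_n}B_n)-\sum_{i=1}^{n-1}\mu(E_i^g)-3\sum_{i=n}^{\infty}\epsilon_{i+1}-\sum_{i=n}^{\infty}\frac{|F_i\setminus(g+F_i)|}{|F_i|}\;\leq\;\mu(E_n^g)\;\leq\;\mu(S^{T_n}B_n)-\sum_{i=1}^{n-1}\mu(E_i^g),
\end{align*}
and then deduce the stated bound by the identical inductive telescoping as in Lemma 4.6: substitute the lower bound for $\mu(E_{n-1}^g)$ into the upper bound for $\mu(E_n^g)$, use $\mu(S^{T_{n-1}}B_{n-1})>1-\epsilon_{n-1}$ and $\mu(S^{T_n}B_n)\le 1$, and reindex via $\sum_{i=n-1}^{\infty}\epsilon_{i+1}=\sum_{i=n}^{\infty}\epsilon_i$ and $\sum_{i=n-1}^{\infty}\frac{|F_i\setminus(g+F_i)|}{|F_i|}=\sum_{i=n}^{\infty}\frac{|F_{i-1}\setminus(g+F_{i-1})|}{|F_{i-1}|}$.

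The upper bound holds because $\bigsqcup_{i=1}^{n}E_i^g\subset S^{T_n}B_n$: a point whose $\kappa$-value for $g$ is fixed at some stage $i\le n$ lies in $S^{T_m}B_m$ for every $m\ge i$, in particular for $m=n$. For the lower bound I will show that a point $x=S^tb\in S^{T_n}B_n$ (with $t\in T_n$, $b\in B_n$) lies in $E_n^g$ whenever (1) $x\in S^{T_i}B_i$ for all $i\ge n$; (2) writing $x=S^{t_i}b_i$ in $(T_i,B_i)$, we have $g+\alpha_i(t_i,b_i)\in F_i$ for all $i\ge n$; (3) $x$ is not among the top or bottom $|T_i|$ levels of $(T_{i+1},B_{i+1})$ for all $i\ge n$; and (4) $x\notin\bigcup_{i=1}^{n-1}E_i^g$. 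Indeed (2) says that at each stage $i\ge n$ there is a unique (since $\alpha_i(\cdot,b_i)$ is a bijection onto $F_i$) $t_i'\in T_i$ with $\alpha_i(t_i',b_i)=g+\alpha_i(t_i,b_i)$, so that $\kappa(x,g)=t_i'-t_i$ is defined; condition (3) makes Proposition 4.1 applicable, and together with the column-to-tile compatibility $\alpha_{i+1}(n_{i+1}+j,b_{i+1})=\alpha_i(j,b_i)+c$ it forces $t_{i+1}'-t_{i+1}=t_i'-t_i$, so the value agrees at consecutive stages $\ge n$; and (4) guarantees that $n$ is the first such stage.

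It remains to bound the measure of the set of $x\in S^{T_n}B_n$ failing one of (1)--(4). Conditions (1), (3), (4) are estimated exactly as in Lemma 4.6, contributing $\sum_{i\ge n}\mu(X\setminus S^{T_{i+1}}B_{i+1})<\sum_{i\ge n}\epsilon_{i+1}$, a term $<\sum_{i\ge n}2\epsilon_{i+1}$ for the boundary levels, and $\sum_{i=1}^{n-1}\mu(E_i^g)$ respectively. The only new estimate is for (2): since $\alpha_i(\cdot,b)$ is a bijection of $T_i$ onto $F_i$ on each atom of $\mathscr{A}_i$, the set $\{S^tb\in S^{T_i}B_i:g+\alpha_i(t,b)\notin F_i\}$ meets every column of $(T_i,B_i)$ in precisely $|F_i\setminus(-g+F_i)|=|F_i\setminus(g+F_i)|$ levels, hence has measure at most $\frac{|F_i\setminus(g+F_i)|}{|F_i|}\mu(S^{T_i}B_i)\le\frac{|F_i\setminus(g+F_i)|}{|F_i|}$; summing over $i\ge n$ supplies the final term of the sandwich, and the telescoping described above finishes the proof. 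I expect the only step needing genuine care to be the propagation step in (2)--(3): checking, via Proposition 4.1 and the column-to-tile compatibility, that ``$g+\alpha_i(t_i,b_i)\in F_i$ for all $i\ge n$'' together with the non-boundary condition really does certify that $\kappa(x,g)$ has stabilised from stage $n$ onward. Everything else is a faithful copy of the argument for Lemma 4.6.
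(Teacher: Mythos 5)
Your proposal is correct and follows essentially the same route as the paper: the identical two-sided sandwich estimate, the same four conditions (1)--(4) characterizing membership in $E_n^g$ (with the range condition $g+\alpha_i(t_i,b_i)\in F_i$ replacing the $\lambda$-case condition), the same per-condition measure bounds, and the same telescoping with $\mu(S^{T_{n-1}}B_{n-1})>1-\epsilon_{n-1}$. The extra detail you supply --- justifying the stabilisation step via Proposition 4.1 and the column-to-tile compatibility, and counting $|F_i\setminus(-g+F_i)|=|F_i\setminus(g+F_i)|$ bad levels per column --- is a correct elaboration of what the paper leaves implicit.
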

\begin{proof}
Similar to the proof of Lemma 4.5, we need to show that 
\begin{align*}
\mu(S^{T_{n}}B_{n})-\sum_{i=1}^{n-1}\mu(E_{g}^{k})-3\sum_{i=n}^{\infty}\epsilon_{i+1}-\sum_{i=n}^{\infty}\frac{|F_{i}\setminus (g+F_{i})|}{|F_{i}|}\leq \mu(E_{g}^{k})\leq\mu(S^{T_{n}}B_{n})-\sum_{i=1}^{n-1}\mu(E_{g}^{k})
\end{align*}
since
\begin{align*}
\mu(E_{n}^{g})&\leq \mu(S^{T_{n}}B_{n})-\sum_{i=1}^{n-1}\mu(E_{n}^{g})\\
&\leq \mu(S^{T_{n}}B_{n})-\bigg(\mu(S^{T_{n-1}}B_{n-1})-\sum_{i=1}^{n-2}\mu(E_{n}^{g})-3\sum_{i=n-1}^{\infty}\epsilon_{i+1}-\sum_{i=n}^{\infty}\frac{|F_{i}\setminus g+F_{i}|}{|F_{i}|}\bigg)-\sum_{i=1}^{n-2}\mu(E_{i}^{g})\\
&\leq  \mu(S^{T_{n}}B_{n})-\mu(S^{T_{n-1}}B_{n-1})+3\sum_{i=n-1}^{\infty}\epsilon_{i+1}+\sum_{i=n}^{\infty}\frac{|F_{i}\setminus g+F_{i}|}{|F_{i}|}\\
&<1-(1-\epsilon_{n-1})+3\sum_{i=n-1}^{\infty}\epsilon_{i+1}+\sum_{i=n}^{\infty}\frac{|F_{i}\setminus g+F_{i}|}{|F_{i}|}\\
&=\epsilon_{n-1}+3\sum_{i=n}^{\infty}\epsilon_{i}+\sum_{i=n}^{\infty}\frac{|F_{i}\setminus g+F_{i}|}{|F_{i}|}.
\end{align*}
The upper bound simply follows from the fact that $\bigsqcup_{i=1}^{n}E^{g}_{i}\subset S^{T_{n}}B_{n} $.
For the lower bound, note that a point $x\in S^{T_{n}}B_{n}$, representing $x$ as $x=S^{t}b$ where $t\in T_{n}$ and $b\in B_{n}$, is also an element of $E_{n}^{g}$ if and only if
\begin{enumerate}
\item $x\in S^{T_{i}}B_{i}$ for every $i\geq n$
\item $\alpha_{i}(t,b)\in F_{i}\cap (-g+F_{i})$ for every $i\geq n$
\item $x$ is not in the top or bottom $|T_{i}|$ levels of the tower $S^{T_{i+1}}B_{i+1}$ for every $i\geq n$.
\item $x\notin \bigcup_{i=1}^{n-1}E^{g}_{i}$
\end{enumerate}
for every $n\in\N$.
Thus, for $n>1$,
\begin{align*}
\mu(E_{n}^{g})&> \mu(S^{T_{n}}B_{n})-\sum_{i-1}^{n-1}\mu(E^{g}_{i})-\sum_{i=n}^{\infty}\mu(X\setminus S^{T_{i+1}}B_{i+1})\\
&\text{ }\text{ }\text{ }-\sum_{i=n}^{\infty}\frac{|F_{i}\setminus g+F_{i}|}{|F_{i}|}-\sum_{i=n}^{\infty} \frac{2|T_{n}|\mu(S^{T_{n+1}}B_{n+1})}{|T_{n+1}|}\\
&>\mu(S^{T_{n}}B_{n})-\sum_{i-1}^{n-1}\mu(E^{g}_{i})-\sum_{i-n}^{\infty}\epsilon_{n+1}-\sum_{i=n}^{\infty}\frac{|F_{i}\setminus g+F_{i}|}{|F_{i}|}-\sum_{n=1}^{\infty}2\epsilon_{n+1}\\
&=\mu(S^{T_{n}}B_{n})-\sum_{i-1}^{n-1}\mu(E^{g}_{i})-3\sum_{i=n}^{\infty}\epsilon_{n+1}-\sum_{i=n}^{\infty}\frac{|F_{i}\setminus g+F_{i}|}{|F_{i}|}.
\end{align*}
 
\end{proof}
\begin{proposition}
The orbit equivalence between $\Z\curvearrowright X$ and $\Z^{2}\curvearrowright X$ is Shannon.
\end{proposition}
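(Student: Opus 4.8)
The plan is to verify the hypotheses of Proposition 2.2 for each of the two cocycles $\lambda: X\times\Z\to\Z^2$ and $\kappa: X\times\Z^2\to\Z$, using the measure estimates from Lemmas 4.5 and 4.6. First I would treat $\lambda$. Fix a generator $k\in\Z$ (it suffices to handle generators, but since the hypothesis of Proposition 2.2 is stated for a fixed $g$, I would in fact carry out the argument for an arbitrary $k\in\Z$). For each $v\in\Z^2$ let $P_{k,v}=\{x\mid \lambda(x,k)=v\}$ be the cocycle partition atom, and set $D^{k}_{n,v}=P_{k,v}\cap D^{k}_{n}$, where $D^{k}_n$ is the set of points whose $\lambda$-value for $k$ is fixed at stage $n$. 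Then $\{D^{k}_{n,v}\}_{n\in\N}$ partitions $P_{k,v}$, and I would take $F_n$ (in the notation of Proposition 2.2, the finite subset of $\Z^2$) to be the set of values $v\in\Z^2$ that can possibly occur as $\alpha_n(k+t,b)-\alpha_n(t,b)$ for some $t\in T_n, b\in B_n$ — this is finite because $\alpha_n$ takes values in the finite square $F_n\subset\Z^2$, and in fact $|F_n|$ here can be bounded by $|T_n|<2^{n^3}$ for $n>N$. With these choices, $\bigcup_{v\in F_n} D^{k}_{n,v}=D^{k}_n$, so the series in Proposition 2.2 becomes $\sum_n -\mu(D^{k}_n)\log\frac{\mu(D^{k}_n)}{|F_n|}= \sum_n -\mu(D^{k}_n)\log\mu(D^{k}_n)+\sum_n \mu(D^{k}_n)\log|F_n|$.

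The second step is to show both of these series converge. For the first, since $\sum_n\mu(D^{k}_n)\le 1$ and $x\mapsto -x\log x$ is bounded on $[0,1]$, the only possible divergence is from the tail; using Lemma 4.5, $\mu(D^k_n)$ is summable (it is dominated by $\epsilon_{n-1}+3\sum_{i\ge n}\epsilon_i + k\sum_{i\ge n}|T_{i-1}|^{-1}$, and both $\sum\epsilon_i$ and $\sum|T_i|^{-1}$ converge since $|T_n|/|T_{n+1}|<\epsilon_{n+1}<2^{-(n+1)}$ forces $|T_n|$ to grow at least geometrically), and a summable sequence $a_n\to 0$ has $\sum -a_n\log a_n<\infty$ because $-a_n\log a_n \le \sqrt{a_n}$ eventually, wait — more carefully, $-a_n\log a_n\le a_n\cdot C + a_n^{1/2}$ type bounds; the clean statement is that if $\sum a_n<\infty$ then $\sum -a_n\log a_n<\infty$, which holds since for $a_n\le 1/2$, $-a_n\log a_n \le a_n^{1/2}$ fails in general but $-a_n\log a_n\le 2\sqrt{a_n}$ does, and $\sqrt{a_n}$ need not be summable — so instead I would split: for the finitely many $n$ with $a_n$ large there is no issue, and for $a_n$ small, $-\log a_n \le -\log(\text{the Lemma 4.5 bound})$, which grows only polynomially in $n$ (like $\log(2^n)\sim n$ from $\epsilon_{n-1}\sim 2^{-n}$), so $-a_n\log a_n \lesssim n\cdot 2^{-n}$, summable. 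For the second series $\sum_n\mu(D^k_n)\log|F_n|$, use $|F_n|\le |T_n| < 2^{n^3}$ for $n>N$, so $\log|F_n|\le n^3\log 2$, and again $\mu(D^k_n)\lesssim 2^{-n}$ by Lemma 4.5, giving a convergent series $\sum_n n^3 2^{-n}\log 2<\infty$. Hence Proposition 2.2 applies and $\mathscr{P}_{\lambda,k}$ has finite Shannon entropy.

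The third step is the identical argument for $\kappa:X\times\Z^2\to\Z$. Fix $g\in\Z^2$; let $E^g_n$ be the set where the $\kappa$-value for $g$ is fixed at stage $n$, partition each atom $P_{g,j}=\{x\mid\kappa(x,g)=j\}$ by $E^g_{n,j}=P_{g,j}\cap E^g_n$, and take the finite set to be the possible values of $t'-t$ with $t,t'\in T_n$, which has cardinality at most $2|T_n|-1<2^{n^3+1}$. Then $\sum_n -\mu(E^g_n)\log\frac{\mu(E^g_n)}{|F_n|} = \sum_n-\mu(E^g_n)\log\mu(E^g_n) + \sum_n\mu(E^g_n)\log|F_n|$, and by Lemma 4.6, $\mu(E^g_n)\lesssim \epsilon_{n-1}+3\sum_{i\ge n}\epsilon_i + \sum_{i\ge n}\frac{|F_{i-1}\setminus(g+F_{i-1})|}{|F_{i-1}|}$; the last sum converges since $|(F_i+F_{i+1})\setminus F_{i+1}|<\epsilon_{i+1}|F_{i+1}|$ together with the monotiling relation forces $\frac{|F_i\setminus(g+F_i)|}{|F_i|}$ to be summable for each fixed $g$ (for $i$ large enough that $g+F_i$ overlaps $F_i$ in all but an $\epsilon$-fraction). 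So $\mu(E^g_n)$ is summable and decays essentially like $2^{-n}$ up to a $g$-dependent constant, and the same two estimates as before — $-a\log a$ controlled by the polynomial-in-$n$ size of $-\log(\text{bound})$, and $\log|F_n|\le (n^3+1)\log 2$ — give convergence. Therefore $\mathscr{P}_{\kappa,g}$ has finite Shannon entropy for every $g$, and combining the two cases, the orbit equivalence is Shannon.

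The main obstacle I anticipate is not any single hard step but rather making the decay rate of $\mu(D^k_n)$ and $\mu(E^g_n)$ precise enough to beat the $\log|F_n|\le n^3\log 2$ factor: one must use the hypothesis $|T_n|<2^{n^3}$ (for $n>N$) in tandem with $\epsilon_n<2^{-n}$ so that the dominant term $\epsilon_{n-1}$ in Lemmas 4.5 and 4.6 decays geometrically while $\log|F_n|$ grows only polynomially. A secondary technical point is handling the $g$-dependence (resp. $k$-dependence) of the tail bounds — for fixed $g$ these are genuine constants, so the argument goes through for each generator and hence, by the cocycle identity, for all of $\Z^2$ (resp. $\Z$), but one should remark that finiteness of Shannon entropy for all group elements follows once it holds for a generating set, since $\mathscr{P}_{\alpha,g_1g_2}$ refines to a join controlled by $\mathscr{P}_{\alpha,g_1}$ translated and $\mathscr{P}_{\alpha,g_2}$, and Shannon entropy is subadditive under joins.
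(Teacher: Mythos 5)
Your proposal is correct and takes essentially the same route as the paper: partition each cocycle-partition atom according to the stage $n$ at which the cocycle value is fixed, bound $\mu(D^{k}_{n})$ and $\mu(E^{g}_{n})$ via Lemmas 4.5 and 4.6 together with $\epsilon_{n}<2^{-n}$ and the growth bound on $|T_{n}|$, and feed the resulting geometric-decay-versus-polynomial-$\log|F_{n}|$ estimate into Proposition 2.2. The remaining differences are cosmetic (splitting the entropy sum into two series, using the F\o lner condition on the squares instead of the paper's boundary estimate for the $\kappa$-tail, and the harmless remark about generators), though your tail estimate $-a_{n}\log a_{n}\lesssim n2^{-n}$ should be justified by monotonicity of $x\mapsto -x\log x$ near $0$ applied to the bound from Lemma 4.5, not by the reversed inequality $-\log a_{n}\le -\log b_{n}$ for $a_{n}\le b_{n}$.
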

\begin{proof}
Let $k\in\Z$ be given. First, we will show that the cocycle partition of $\lambda:X\times\Z\to \Z^{2}$ for $k$ has finite Shannon entropy. For a given $g\in\Z^{2}$, let $P_{g}=\{x\in X\mid \lambda(x,k)=g\}$ and let $\mathscr{P}=\{P_{g}\mid g\in\Z^{2}\}$. For every $n\in\N$ and $g\in\Z^{2}$, set $D^{k,g}_{n}=D^{k}_{n}\cap P_{g}$ where $D^{k}_{n}$ is the set of points whose $\lambda$ cocycle value for $k$ is fixed at stage $n$. So, for every $g\in \Z^{2}$, $\{D^{k,g}_{n}\}_{n\in\N}$ is a partition of $P_{g}$. Recall that, for every $n\in\N$, $F_{n}\subset \Z^{2}$ is the set that corresponds to the column-to-tile map $\alpha_{n}$ and
let $\bar{F}_{n}=F_{n}-F_{n}$. Note that $D^{k,g}_{n}=\emptyset$ if $g\notin \bar{F}_{n}$. So, in order to apply Proposition 2.2, the only property left for us to show is that  $\sum_{n=1}^{\infty}\mu(D_{n}^{k})\log\frac{\mu(D_{n}^{k})}{|\bar{F}_{n}|}<\infty$ since $D_{n}^{k}=\cup_{g\in \Z^{2}}D_{n}^{k,g}$.\par
By Lemma 4.5, we have that $\mu(D_{n}^{k})< \epsilon_{n-1}+3\sum_{i=n}^{\infty}\epsilon_{i}+k\sum_{i=n}^{\infty}\frac{1}{|T_{i-1}|}$. By choosing $\epsilon_{n}<2^{-n}$ in our construction, which also implies that $2^{i}<|T_{i}|$ for all $i>1$, we have that $\mu(D_{n}^{k})< 2^{-(n-1)}+3(2^{-n+1})+k(2^{-n+2})$ for all $n$. Thus, there is an $N\in\N$ such that for all $n>N$, we have $\mu(D_{n}^{k})<2^{-n/2}$. By construction, we have $|F_{n}|<2^{n^{2}}$ for $n>N$ and $|\bar{F}_{n}|=4|F_{n}|$. So,  this implies that 
\begin{align*}
\sum_{n>N} - \mu(D^{k}_{n})\log\frac{\mu(D^{k}_{n})}{|\bar{F}_{n}|}&\leq \sum_{n>N} - 2^{-n/2}\log\frac{2^{-n/2}}{|\bar{F}_{n}|}\\
&\leq \sum_{n>N} - 2^{-n/2}\log\frac{2^{-n/2}}{4|F_{n}|}\\
&\leq \sum_{n>N}-2^{-n/2}\log 2^{-n^{2}-n-2}\\
&<\infty
\end{align*}
So, we can apply Proposition 2.2 to conclude that $\mathscr{P}$ has finite Shannon entropy.
\par

Now, we will show that the Shannon entropy of each partition associated to $\kappa: X\times \Z^{2}\to \Z$ is finite. Fix $g\in \Z^{2}$. Let $P_{k}=\{x\in X\mid \kappa(x,g)=k\}$ for every $k\in \Z$. For every $n\in\N$ and $k\in\Z$, set $E^{g,k}_{n}=E^{g}_{n}\cap P_{k}$. So, for every $k\in\Z$, we have that $\{E^{g,k}_{n}\}_{n\in\N}$ is a partition of $P_{k}$. 
Just like the $\lambda$ cocycle case, we have that $E^{g,k}_{n}=\emptyset$ if $k\notin \bar{T}_{n}$ where $\bar{T}_{n}=T_{n}-T_{n}$. 
Lemma 4.6 gives us that 
\begin{align*}
\mu(\bigcup_{k\in \bar{T}_n}E_{n}^{g,k})=\mu(E_{n}^{g})< \epsilon_{n-1}+3\sum_{i=n}^{\infty}\epsilon_{i}+\sum_{i=n}^{\infty}\frac{|F_{i-1}\setminus (g+F_{i-1})|}{|F_{i-1}|}.
\end{align*}
Note that we can find an $N\in\N$ such that for all $n>N$ we have that $\sum_{i=n}^{\infty}\frac{|F_{i-1}\setminus (g+F_{i-1})|}{|F_{i-1}|}<2^{-n/4}$ since $2^{i-1}<|F_{i-1}|$ and $\frac{|F_{i-1}\setminus (g+F_{i-1})|}{|F_{i-1}|}$ is approximately $\frac{1}{\sqrt{|F_{i-1}|}}$. Thus, there is an $N\in\N$ such that, for all $n>N$, $\sum_{i=n}^{\infty}\frac{|F_{i-1}\setminus (g+F_{i-1})|}{|F_{i-1}|}<2^{-n/4}$ and $\epsilon_{n}<\frac{1}{2^{n}}$ so that we have $\mu(E^{g}_{n})<2^{-n/8}$ for all $n>N$. 
By taking $|T_{n}|<2^{n^{2}}$ for $n>N$ and noting that $|\bar{T}_{n}|=2|T_{n}|$, we have that 
\begin{align*}
\sum_{n>N} -|\bar{T}_{n}| \frac{\mu(E^{g}_{n})}{|\bar{T}_{n}|}\log\frac{\mu(E^{g}_{n})}{|\bar{T}_{n}|}&\leq \sum_{n>N} -2^{-n/8}\log\frac{2^{-n/8}}{|\bar{T}_{n}|}\\
&\leq \sum_{n>N} -2^{-n/8}\log\frac{2^{-n/8}}{2|T_{n}|}\\
&\leq \sum_{n>N}-2^{-n/8}\log 2^{-n^{2}-\frac{n}{8}-1}\\
&<\infty
\end{align*}
Since we have that
\begin{align*}
\sum_{n>N} -
\mu(\bigcup_{k\in\bar{T}_{n}}E^{g,k}_{n})\log\frac{\mu(\bigcup_{k\in\bar{T}_{n}}E^{g,k}_{n})}{|\bar{T}_{n}|}=\sum_{n>N} -|\bar{T}_{n}| \frac{\mu(E^{g}_{n})}{|\bar{T}_{n}|}\log\frac{\mu(E^{g}_{n})}{|\bar{T}_{n}|},
\end{align*}
we can apply Proposition 2.2 to conclude that the Shannon entropy of each partition associated to $\kappa$ is finite.
\end{proof}

\end{section}
\begin{section}{Summary of the Construction from Fieldsteel and Friedman}
    Now, we will summarize the construction involved in Fieldsteel-Friedman. Later, we will show how this construction can be modified to produce a theorem similar to Theorem 1.16 for Shannon orbit equivalence.
\begin{remark}
If $L\in\N$, we will use the following notation to represent a $d$-cube in $\N^{d}$
\begin{align*}
C_{L}=\{v=(v_{1},\dots,v_{d})\in\Z^{d}\mid 1\leq v_{i}\leq L, i=1,2,\dots, d\}
\end{align*}
and the following notation to represent the $d$-cube centered at the origin of side length $2L+1$ in $\Z^{d}$
\begin{align*}
\bar{C}_{L}=\{v=(v_{1},\dots,v_{d})\in\Z^{d}\mid |v_{i}|\leq L, i=1,2,\dots, d\}
\end{align*}
\end{remark}
\begin{definition}
Let $J,K,L\in N$. We say a permutation $\pi$ of $C_{L}$ is a $(J,K,L)$-\textbf{permutation} if there exists a $u\in\Z^{d}$ and $v\in \bar{C}_{K}$ such that for all $w\in C_{j}+u$, we have that $\pi(w)=w+v$. The vector $v$ is called the \textbf{translation vector} of $\pi$ and will be denoted by $tv(\pi)$. We also call a permutation $\pi$ of $C_{L}+w$ for some $w\in\Z^{d}$ a $(J,K,L)$-\textbf{permutation} if it is a $(J,K,L)$-permutation of $C_{L}$ up to translation by $w$. Moreover, a bijection $\pi$ of $\Z^{d}$ is called a $(J,K,L)$-\textbf{blocked bijection} if there is a collection $\{C_{L}+w_{i}\}_{i\in I}$ of disjoint translates of $C_{L}$ for which $\pi$ acts on each $C_{L}+w_{i}$ by a $(J,K,L)$-permutation and acts as the identity on the rest of $\Z^{d}$. If $C_{J}+u_{i}+w_{i}$ is the subset of $C_{L}+w_{i}$ on which $\pi$ acts by translation, then we call the sets $C_{L}+w_{i}$ the \textbf{blocks} of $\pi$, the sets $C_{J}+u_{i}+w_{i}$ the \textbf{rigid blocks} of $\pi$, and the sets $C_{L}+w_{i}\setminus C_{J}+u_{i}+w_{i}$ \textbf{filler sets} of $\pi$.\par
Now, we say a cocycle $\alpha$ for a probability measure preserving $\Z^{d}$- action $S$ of the probability space $(X,\mu)$ is a $(J,K,L)$-\textbf{blocked cocycle} if, for almost every $x\in X$, there is a $(J,K,L)$-blocked bijection $\pi_{x}$ of $\Z^{d}$ such that $\alpha(x,\cdot)=\pi_{x}(\cdot)-\pi_{x}(0)$, and the blocks, rigid blocks, and the permutations on them are measurably selected. If $x\in X$ and $C_{L}+w_{i}$ is a block of $\pi_{x}$, we call the set $S^{C_{L}+w_{i}}(x)$ a \textbf{block} of $\alpha$. We make a similar definition for rigid blocks and filler sets. In addition to being a a $(J,K,L)$-blocked cocycle, we say $\alpha$ is a a $(J,K,L,\epsilon)$-\textbf{blocked cocycle} of $S$ if the measure of points in $X$ which are not in a block of $\alpha$ is less than $\epsilon$.
\end{definition}
\begin{remark}
Given a sequence of cocycles $\{\alpha_{i}\}_{i\in\N}$ of an action $\Z^{d}\curvearrowright (X,\mu)$, we say that $\{\alpha_{i}\}_{i\in\N}$ converges to a cocycle $\alpha$ if, for almost every $x\in X$ and every $v\in\Z^{d}$, we have that $\lim_{i\to\infty}\alpha_{i}(v,x)=\alpha(v,x)$.
\end{remark}
Lemma 1 in Fieldsteel-Friedman shows that, given a summable sequence $\{\epsilon_{i}\}_{i\in 
\N}$ in $(0,1)$ and an arbitrary sequence $\{K_{i}\}_{i\in \N}$ of positive integers, then, given a sequence of positive integers $\{J_{i}\}_{i\in \N}$ which grow at a sufficiently fast rate and a sequence of positive integers $\{L_{i}\}_{i\in \N}$ such that each $L_{i}$ is sufficiently close to $J_{i}$, we can find a sequence of cocycles $\{\alpha_{i}\}_{i\in\N}$ of an action $\Z^{d}\curvearrowright^{T}(X,\mu)$ which converges to a cocycle $\alpha$ such that the corresponding orbit equivalence between $T$ and $T^{\alpha^{-1}}$ is, for all $a\geq 1$ , a weak-$a$-equivalence and, for all $b\in(0,1)$, a strong-$b$-equivalence.

To get mixing, Fieldsteel and Friedman construct a sequence of blocked cocycles by choosing parameters so that the ergodic theorem and the Rokhlin lemma guarantee a relative mixing condition for each cocycle. In turn, this gives mixing once we pass to the limit, guaranteed by Lemma 1, of our sequence of blocked cocycles.
\end{section}
\begin{section}{Fieldsteel-Friedman Orbit Equivalence Is Shannon}
For the rest of this section, let $\Z\curvearrowright^{S} (X,\mu)$ be an ergodic probability measure preserving action. Also, let $\{K_{n}\}_{n\in\N}$ and $\{L_{n}\}_{n\in\N}$ be sequences of positive integers, and let $\{\epsilon_{n}\}_{n\in\N}$ be a sequence such that $\epsilon_{n}<2^{-n}$ for every $n\in N$. Assume we have a sequence of cocycles $\{\alpha_{n}\}_{n\in\N}$ of $S$ taking values in $\Z$ such that $\beta_{n}=\alpha_{n}\circ \alpha_{n-1}^{-1}$ is a $(K_{n}-L_{n}, K_{n},L_{n},\epsilon_{n})$-blocked cocycle. For every $n\in\N$, we set $S_{n}=S^{\alpha_{n}^{-1}}$ and  $B_{n}=2(\sum_{j=1}^{n}L_{j})$. Also, assume that for every $n\in\N$ we have that $\frac{K_{n}}{L_{n}}<\epsilon_{n}$, $\frac{n+B_{n}}{L_{n+1}-K_{n+1}}<\epsilon_{n+1}$, and, for large enough $n$, $K_{n}<2^{n^{3}}$.
\begin{proposition}
The sequence of cocycles $\{\alpha_{i}\}_{i\in\N}$ converges to a cocycle $\alpha$ given, for every $n\in \Z$ and almost every $x\in X$, by the formula $\lim_{i\to\infty}\alpha_{i}(x,n)=\alpha(x,n)$.
\end{proposition}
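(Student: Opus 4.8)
The plan is to prove that for a.e.\ $x\in X$ and each fixed $m\in\Z$ the integer sequence $n\mapsto\alpha_n(x,m)$ is eventually constant, and then to check that the resulting pointwise limit $\alpha$ satisfies the two defining properties of a cocycle. For the first part I would run a Borel--Cantelli argument on the ``stabilization failure'' sets, exploiting the blocked structure of the increments. Since $\beta_n=\alpha_n\circ\alpha_{n-1}^{-1}$ we have $\alpha_n=\beta_n\circ\alpha_{n-1}$ (composition taken $x$-wise), i.e.\ $\alpha_n(x,m)=\beta_n(x,\alpha_{n-1}(x,m))$, and by hypothesis $\beta_n(x,\cdot)=\pi_x^{(n)}(\cdot)-\pi_x^{(n)}(0)$ for a blocked bijection $\pi_x^{(n)}$ of $\Z$ whose blocks have side $L_n$ and whose rigid blocks have side $L_n-K_n$ (the quantity appearing in the hypothesis $\tfrac{n+B_n}{L_{n+1}-K_{n+1}}<\epsilon_{n+1}$), with the blocks, rigid blocks and permutations measurably selected. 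On a single rigid block $\pi_x^{(n)}$ is one translation, so the two $\pi_x^{(n)}$-terms cancel; hence $\alpha_n(x,m)=\alpha_{n-1}(x,m)$ whenever $0$ and $\alpha_{n-1}(x,m)$ lie in a common rigid block of $\pi_x^{(n)}$.

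To convert this into a measure estimate I need two ingredients. First, an a priori bound $|\alpha_{n-1}(x,m)|\le |m|+B_{n-1}$: since $\pi_x^{(j)}$ permutes each block $C_{L_j}+w_i$ within itself and fixes the complement of the blocks, $|\pi_x^{(j)}(w)-w|<L_j$ for all $w$, hence $|\beta_j(x,w)-w|<2L_j$, and telescoping down the chain $\alpha_n=\beta_n\circ\cdots\circ\beta_1\circ\alpha_0$ (with $\alpha_0$ the identity cocycle, so $S_0=S$) gives the claim with $B_{n-1}=2\sum_{j=1}^{n-1}L_j$. Second, the set of $x$ for which $0$ either fails to lie in a rigid block of $\pi_x^{(n)}$, or lies in one but within $\ell^\infty$-distance $|m|+B_{n-1}$ of that rigid block's boundary, has measure at most $(\text{measure of }x\text{ not in a block of }\beta_n)+(\text{measure of }x\text{ in a filler set of }\beta_n)+(\text{measure of }x\text{ whose rigid block comes within }|m|+B_{n-1}\text{ of its boundary})<\epsilon_n+\tfrac{K_n}{L_n}+\tfrac{2(|m|+B_{n-1})}{L_n-K_n}$; by the hypotheses $\tfrac{K_n}{L_n}<\epsilon_n$ and $\tfrac{(n-1)+B_{n-1}}{L_n-K_n}<\epsilon_n$, this is $<4\epsilon_n$ once $n-1\ge|m|$. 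Combining the two ingredients, $E_n^m:=\{x:\alpha_n(x,m)\neq\alpha_{n-1}(x,m)\}$ is contained in that exceptional set, so $\sum_n\mu(E_n^m)<\infty$; Borel--Cantelli gives that for a.e.\ $x$ the value $\alpha_n(x,m)$ is eventually constant, and intersecting the resulting full-measure sets over the countable family $m\in\Z$ yields the a.e.-defined limit $\alpha(x,m)=\lim_n\alpha_n(x,m)$.

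Finally I would verify that $\alpha$ is a cocycle. The identity $\alpha_n(x,g_1+g_2)=\alpha_n(S^{g_2}x,g_1)+\alpha_n(x,g_2)$ holds for every $n$; by $S$-invariance of $\mu$, for a.e.\ $x$ the point $S^{g_2}x$ also lies in the convergence set, so letting $n\to\infty$ (each term is eventually constant) gives the identity for $\alpha$, for all $g_1,g_2\in\Z$ after a countable intersection. Injectivity of $\alpha(x,\cdot)$ is immediate: for $g_1\neq g_2$ and large $n$, $\alpha(x,g_i)=\alpha_n(x,g_i)$ and $\alpha_n(x,\cdot)$ is a bijection. For surjectivity I would run the identical Borel--Cantelli argument on the inverse cocycles $\{\alpha_n^{-1}\}$: from $\alpha_n=\beta_n\circ\alpha_{n-1}$ one gets $\alpha_n^{-1}=\alpha_{n-1}^{-1}\circ\beta_n^{\mathrm{inv}}$, where $\beta_n^{\mathrm{inv}}(x,\cdot)$ is the $x$-wise inverse of $\beta_n(x,\cdot)$, so by injectivity of $\alpha_{n-1}^{-1}(x,\cdot)$ one has $\{x:\alpha_n^{-1}(x,h)\neq\alpha_{n-1}^{-1}(x,h)\}=\{x:\beta_n(x,h)\neq h\}$, and since $h$ is a fixed integer the same estimate as above (now not even needing the $B_{n-1}$ bound) gives summability. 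Thus $\psi(x,h):=\lim_n\alpha_n^{-1}(x,h)$ exists for a.e.\ $x$ and all $h$; since $\alpha_n^{-1}(x,h)=\psi(x,h)$ for all large $n$ and $\alpha_n(x,\alpha_n^{-1}(x,h))=h$ always, we get $\alpha(x,\psi(x,h))=\lim_n\alpha_n(x,\psi(x,h))=h$, so $\alpha(x,\cdot)$ is onto $\Z$.

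The step I expect to be the main obstacle is the measure estimate in the second paragraph: one has to pass carefully between the cocycle-level data (``the measure of points not in a block of $\beta_n$ is $<\epsilon_n$'', and filler sets occupy a $K_n/L_n$-fraction of each block) and the orbit-level picture of the blocked bijection $\pi_x^{(n)}$ read off from the base point $x$ (equivalently, from $0\in\Z$), which is exactly where the measurable selection of blocks, rigid blocks and permutations in the definition of a blocked cocycle is used; granting this translation, the rest is assembling the rate hypotheses and Borel--Cantelli. Surjectivity of $\alpha(x,\cdot)$ is the other point that needs its own, parallel, argument, carried out on the inverse cocycles as above.
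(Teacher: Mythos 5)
Your proposal is correct and takes essentially the same route as the paper: the same Borel--Cantelli argument on the three failure modes (not in a block of $\beta_n$, in a filler set, or in a rigid block but within $|m|+B_{n-1}$ of its boundary), using the a priori bound $|\alpha_{n-1}(x,m)|\le |m|+B_{n-1}$ and the rate hypotheses to get summability, and then passing the cocycle identity and injectivity to the limit pointwise. The only divergence is surjectivity, which you handle by stabilizing the inverse cocycles $\alpha_n^{-1}$ (in effect anticipating the paper's Lemma 6.4), whereas the paper argues directly that any target value $n$ with $|n|<j+B_j$ is already attained by the bijection $\alpha_j(x,\cdot)$ and is stable from stage $j$ on; both verifications are valid.
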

The proof of this proposition is an adaption of the proof of Lemma 1 from Fieldsteel and Friedman \cite{FF}.
\begin{proof}
First, set 
\begin{align*}
F_{i}=\{x\mid S_{i}^{\bar{C}_{i+B_{i}}}(x) \text{ is contained in a rigid block of } \beta_{i+1}\}
\end{align*}
We will first show that $\mu(\cup_{i=1}^{\infty}\cap_{j=i}^{\infty}F_{j})=1$, which we can accomplish by showing that $\sum_{j=1}^{\infty}\mu(X\setminus F_{j})<\infty$ and applying Borel-Cantelli. To show this, note that $x\notin F_{j}$ in three possible cases:
\begin{enumerate}
\item $x$ is not in a block of $\beta_{j+1}$,
\item $x$ is in a block of $\beta_{j+1}$, but not a rigid block, or
\item $x$ is in a rigid block of $\beta_{j+1}$, but not all of $S_{j}^{\bar{C}_{j+B_{j}}}(x)$.
\end{enumerate}
Note that the measure of the set of points that satisfy case 1 is $<\epsilon_{j+1}$, that satisfy case 2 is $\frac{K_{j+1}}{L_{j+1}}<\epsilon_{j+1}$, and that satisfy case 3 is $\frac{2(j+B_{j})}{L_{j+1}-K_{j+1}}<2\epsilon_{j+1}$. Thus, $\sum_{j=1}^{\infty}\mu(X\setminus F_{j})<\sum_{j=1}^{\infty}4\epsilon_{j+1}<\infty$ since for every $j\in \N$ we have that $\epsilon_{j}<2^{-j}$. Let $x\in \cup_{i=1}^{\infty}\cap_{j=i}^{\infty}F_{j}$ and $n\in\Z$. So, we can find an $i$ so that $|n|<i$ and $x\in \cap_{j=i}^{\infty}F_{j}$. For all $j\geq i$, we have
\begin{align*}
\alpha_{j}(x,n)=\alpha_{i}(x,n)=\lim_{k\to\infty}\alpha_{k}(x,n)=\alpha(x,n).
\end{align*}
Hence, $\alpha$ is a well defined function. \par 

We also have that $\alpha$ satisfies the cocycle identity since, given almost any $x\in X$ and given $m,n\in\Z$, we can find an $i\in\N$ such that $\alpha(x,n+m)=\alpha_{i}(x,n+m)$, $\alpha(x,m)=\alpha_{i}(x,m)$, and $\alpha(S^{m}x,n)=\alpha_{i}(S^{m}x,n)$ so that 
\begin{align*}
\alpha(x,n+m)=\alpha_{i}(x,n+m)= \alpha_{i}(x,m)+\alpha_{i}(S^{m}x,n)=\alpha(x,m)+\alpha(S^{m}x,n).
\end{align*}
A very similar argument shows that for almost every $x\in X$, we have that $\alpha(x,\cdot)$ is injective. Now, we just need to show that, for almost every $x\in X$, the function $\alpha(x,\cdot)$ is surjective. Let $x\in \cup_{i=1}^{\infty}\cap_{j=i}^{\infty}F_{j}$ and $n\in\Z$. Then, we can find an $i\in\N$ such that $x\in \cap_{j=i}^{\infty}F_{j}$. This implies that we can find a $j\in\N$ such that $j+B_{j}>|n|$. Since $\alpha_{j}(x,\cdot)$ is bijective, we can find an $m\in\Z$ such that $\alpha_{j}(x,m)=n$. Hence, we have that $\alpha(x,m)=n$ since $\alpha_{j}(x,m)=\alpha_{k}(x,m)$ for every $k\geq j$. 
\end{proof}
\begin{definition} Let $\alpha$ be a cocycle which is a limit of the cocycles $\{\alpha_{n}\}_{n\in\N}$ and fix a $k\in\Z$. We say a point $x\in X$ has its $\alpha$ \textbf{cocycle value for} $k$ \textbf{fixed at stage} $N$ if $N$ is the smallest natural number such that $\alpha_{N}(x,k)=\alpha(x,k)$. Recall that this implies that, for every $n\geq N$, $\alpha_{n}(x,k)=\alpha(x,k)$. We will let $D^{k}_{N}$ denote the set of all points whose cocycle value for $k$ is fixed at stage $N$.
\end{definition}
\begin{lemma} For the cocycle $\alpha$ for $S$ given above, we have for a given $k\in\Z$ and  $n>1$ that \begin{align*}
\mu(D^{k}_{n})<2\sum_{i=n-1}^{\infty}\epsilon_{i}+\sum_{i=n-1}^{\infty}\frac{|k|+B_{n-2}}{L_{i}}
\end{align*}
\end{lemma}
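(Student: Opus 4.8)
The plan is to imitate the proof of Lemma 4.5, but to replace the growing radius $i+B_i$ used to build the sets $F_i$ in the proof of Proposition 6.1 by a \emph{fixed} radius, namely $|k|+B_{n-2}$; keeping this radius from drifting upward along the construction is exactly what yields the stated bound.

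First I would recall the mechanism behind Proposition 6.1. Since $\beta_{j+1}=\alpha_{j+1}\circ\alpha_j^{-1}$ is a blocked cocycle of $S_j$, for a.e.\ $x$ there is a blocked bijection $\pi^{(j+1)}_x$ of $\Z$ with $\beta_{j+1}(x,\cdot)=\pi^{(j+1)}_x(\cdot)-\pi^{(j+1)}_x(0)$, and $\pi^{(j+1)}_x$ restricted to any rigid block is a translation, so it preserves differences. Hence $\beta_{j+1}(x,w)=w$ whenever the positions $0$ and $w$ lie in a common rigid block of $\pi^{(j+1)}_x$, i.e.\ whenever $x$ and $S_j^{w}(x)$ lie in a common rigid block of $\beta_{j+1}$; combined with the cocycle identity $\alpha_{j+1}(x,k)=\beta_{j+1}(x,\alpha_j(x,k))$, this forces $\alpha_{j+1}(x,k)=\alpha_j(x,k)$ as soon as $x$ and $S_j^{\alpha_j(x,k)}(x)=S^{k}(x)$ lie in the same rigid block of $\beta_{j+1}$.

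Next I would record the a priori displacement bound: each $\beta_i$ moves a point by less than $2L_i$ (since $\pi^{(i)}_x$ permutes points only within blocks of size $L_i$), so telescoping gives $|\alpha_{n-2}(x,k)-k|<2\sum_{i=1}^{n-2}L_i=B_{n-2}$ for a.e.\ $x$ (for $n=2$ read $B_0=0$, $\alpha_0=\mathrm{id}$), whence $|\alpha_{n-2}(x,k)|<|k|+B_{n-2}$. Then I would introduce, for $j\geq n-2$, the refined sets
\begin{align*}
F'_j=\bigl\{x\mid S_j^{\bar{C}_{|k|+B_{n-2}}}(x)\text{ is contained in a rigid block of }\beta_{j+1}\bigr\},
\end{align*}
and prove the crucial claim: if $x\in\bigcap_{j\geq n-2}F'_j$, then $\alpha_{n-2}(x,k)=\alpha_{n-1}(x,k)=\cdots=\alpha(x,k)$, so $x$'s cocycle value for $k$ is fixed at some stage $\leq n-2$ and in particular $x\notin D^k_n$. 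The argument is an induction on $j$: because $|\alpha_{n-2}(x,k)|<|k|+B_{n-2}$, both $x$ and $S_{n-2}^{\alpha_{n-2}(x,k)}(x)$ lie in $S_{n-2}^{\bar{C}_{|k|+B_{n-2}}}(x)$, hence in the rigid block of $\beta_{n-1}$ furnished by $F'_{n-2}$, so by the mechanism above $\alpha_{n-1}(x,k)=\alpha_{n-2}(x,k)$; this equality does not enlarge $|\alpha_{\cdot}(x,k)|$, so the same step at each later stage keeps $|\alpha_j(x,k)|<|k|+B_{n-2}$ and gives $\alpha_{j+1}(x,k)=\alpha_j(x,k)$ for all $j\geq n-2$. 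This induction — especially the point that the radius stays pinned at $|k|+B_{n-2}$ rather than growing with $j$, which is what separates this estimate from the cruder one — is the step I expect to be the only real obstacle.

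To finish, take contrapositives: $D^k_n\subseteq\bigcup_{j\geq n-2}(X\setminus F'_j)$, and estimate $\mu(X\setminus F'_j)$ just as in the proof of Proposition 6.1. A point is outside $F'_j$ either because it lies in no block of $\beta_{j+1}$ (measure $<\epsilon_{j+1}$), or because it lies in a block but not in a rigid block (measure $<K_{j+1}/L_{j+1}<\epsilon_{j+1}$), or because it lies in a rigid block but within distance $|k|+B_{n-2}$ of the boundary of that block (measure $<\frac{|k|+B_{n-2}}{L_{j+1}}$, by the level-counting estimate used for the third case there, now with radius $|k|+B_{n-2}$ in place of $j+B_j$). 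Summing over $j\geq n-2$ and reindexing with $i=j+1$ then gives
\begin{align*}
\mu(D^k_n)<\sum_{j\geq n-2}\Bigl(2\epsilon_{j+1}+\frac{|k|+B_{n-2}}{L_{j+1}}\Bigr)=2\sum_{i=n-1}^{\infty}\epsilon_i+\sum_{i=n-1}^{\infty}\frac{|k|+B_{n-2}}{L_i},
\end{align*}
which is the desired inequality; the right-hand side is finite since $\epsilon_i<2^{-i}$ and the hypotheses force $L_i$ to dwarf $|k|+B_{n-2}$ for all $i\geq n-1$.
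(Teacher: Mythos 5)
Your proof is correct in substance and is essentially the paper's argument: the displacement bound $|\alpha_{n-2}(x,k)|<|k|+B_{n-2}$, the three failure modes at each stage (outside the blocks of $\beta_{j+1}$, in a filler set, too close to the edge of a rigid block), and the summation over stages $i\geq n-1$ are exactly the counts the paper makes, the only difference being that you state the inclusion $D^{k}_{n}\subseteq\bigcup_{j\geq n-2}(X\setminus F'_{j})$ directly, whereas the paper packages the same counting as a two-sided bound on $\mu(D^{k}_{n})$ in terms of $\sum_{i=1}^{n-1}\mu(D^{k}_{i})$ and then feeds the stage-$(n-1)$ lower bound into the stage-$n$ upper bound. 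The one caveat is that the edge estimate for your two-sided window should be $\frac{2(|k|+B_{n-2})}{L_{j+1}}$ rather than $\frac{|k|+B_{n-2}}{L_{j+1}}$, but the paper's own proof is equally loose on this constant (it silently trades $L_{i}-K_{i}$ for $L_{i}$), and the factor of $2$ is immaterial to how the lemma is used in Proposition 6.7.
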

\begin{proof}
To prove this lemma, it is sufficient to show that the following inequality holds:
\begin{align*}
1-\sum_{i=1}^{n-1}\mu(D^{k}_{i})-\sum_{i=n}^{\infty}\mu(X\setminus S_{i-1}^{C_{M_{i}}}B_{i})-\sum_{i=n}^{\infty}\frac{K_{i}}{L_{i}}-\sum_{i=n}^{\infty}\frac{|k|+B_{n-1}}{L_{i}}\leq \mu(D^{k}_{n})\leq 1-\sum_{i=1}^{n-1}\mu(D^{k}_{i}).
\end{align*}
Indeed, assuming the above inequality holds, we have that 
\begin{align*}
\mu(D^{k}_{n})&\leq 1-\sum_{i=1}^{n-1}\mu(D^{k}_{i})\\
&\leq 1- \bigg(1-\sum_{i=1}^{n-2}\mu(D^{k}_{i})-\sum_{i=n-1}^{\infty}\mu(X\setminus S_{i-1}^{C_{M_{i}}}B_{i})-\sum_{i=n-1}^{\infty}\frac{K_{i}}{L_{i}}-\sum_{i=n-1}^{\infty}\frac{|k|+B_{n-2}}{L_{i}-K_{i}}\bigg)-\sum_{i=1}^{n-2}\mu(D^{k}_{i})\\
&\leq \sum_{i=n-1}^{\infty}\mu(X\setminus S_{i-1}^{C_{M_{i}}}B_{i})+\sum_{i=n-1}^{\infty}\frac{K_{i}}{L_{i}}+\sum_{i=n-1}^{\infty}\frac{|k|+B_{n-2}}{L_{i}-K_{i}}\\
&< \sum_{i=n-1}^{\infty}\epsilon_{i}+\sum_{i=n-1}^{\infty}\epsilon_{i}+\sum_{i=n-1}^{\infty}\frac{|k|+B_{n-2}}{L_{i}-K_{i}}\\&=2\sum_{i=n-1}^{\infty}\epsilon_{i}+\sum_{i=n-1}^{\infty}\frac{|k|+B_{n-2}}{L_{i}}.
\end{align*}
Note that we simply get the upper bound for $\mu(D^{k}_{n})$ by noticing that $D^{k}_{n}\subset X\setminus \bigg(\bigcup_{i=1}^{n-1}D_{i}^{k}\bigg)$. For the lower bound, we first notice that $x\in X$ is guaranteed to be in $D^{k}_{n}$ if both $x$ and $S_{i-1}^{\alpha_{n-1}(x,k)}(x)$ are in the same rigid block of the cocycle $\beta_{i}$ for all $i\geq n$. So, a point $x\in X$ may not be in $D^{k}_{n}$ if it satisfies at least one of the following four conditions:
\begin{enumerate}
\item $x\in \bigcup_{i=1}^{n-1}D_{i}^{k}$,
\item there is an $i\geq n$ such that $x\notin S_{i-1}^{C_{M_{i}}}B_{i}$,

\item there is an $i\geq n$ such that $x$ is in a block, but not a rigid block, of $\beta_{i}$, or
\item there is an $i\geq n$ such that $x$ is in a rigid block of $\beta_{i}$ but $S_{i-1}^{\alpha_{n-1}(x,k)}(x)$ is not. 
\end{enumerate}
So, the measure of the set of points that satisfy each particular condition is bounded above by $\sum_{i=1}^{n-1}\mu(D^{k}_{i})$, $\sum_{i=n}^{\infty}\mu(X\setminus S_{i-1}^{C_{M_{i}}}B_{i})$, $\sum_{i=n}^{\infty}\frac{K_{i}}{L_{i}}$, and $\sum_{i=n}^{\infty}\frac{|k|+B_{n-1}}{L_{i}-K_{i}}$ respectively.
\end{proof}

Now, we wish to show that we can make similar estimates for $\alpha^{-1}$. First, we must show that $\alpha^{-1}$ can be written as a sequence of cocycles.
\begin{lemma} If $\{\alpha_{n}\}_{n\in\N}$ is our sequence of cocyles from before, then $\alpha^{-1}=\lim_{n\to\infty}\alpha_{n}^{-1}$.
\end{lemma}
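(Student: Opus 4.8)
I want to show that the pointwise limit $\alpha^{-1}$ of the inverse cocycles $\{\alpha_n^{-1}\}$ exists and equals the inverse of $\alpha$. The key observation is that the set $F_i$ defined in the proof of Proposition 6.3 — namely the points $x$ for which $S_i^{\bar C_{i+B_i}}(x)$ lies inside a single rigid block of $\beta_{i+1}$ — already gives me a full-measure ``stabilization'' set after the Borel--Cantelli step. I would reuse that: let $G = \bigcup_{i=1}^\infty \bigcap_{j=i}^\infty F_j$, which has full measure, and fix $x \in G$ and $\ell \in \Z$. By definition of $\alpha^{-1}$ as given in the Remark in Section~1, I need to understand $\alpha^{-1}(x,\ell)$, the unique $m$ with $\alpha(x,m) = \ell$, and compare it with $\alpha_n^{-1}(x,\ell)$, the unique $m_n$ with $\alpha_n(x,m_n) = \ell$.

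First I would argue that once $x$ lies in a rigid block of $\beta_{j+1}$ together with a large enough neighborhood, the value $\alpha_j(x,\cdot)$ and $\alpha_{j+1}(x,\cdot)$ agree on a ball whose radius grows with $j$; this is exactly the content already extracted in Proposition~6.3, where on $\bigcap_{j\ge i} F_j$ we have $\alpha_j(x,n) = \alpha_i(x,n)$ for all $|n| < i$. I then need the dual statement about preimages: since $\alpha_j(x,\cdot)$ is a bijection of $\Z$ and agrees with $\alpha(x,\cdot)$ on an increasingly large window, for each fixed $\ell$ there is a threshold $N(x,\ell)$ past which the solution $m$ to $\alpha_j(x,m)=\ell$ stops changing and equals the solution to $\alpha(x,m) = \ell$. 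Concretely: pick $i$ with $i + B_i > |\ell|$ and $x \in \bigcap_{j\ge i}F_j$; since $\alpha_j(x,\cdot)$ is surjective there is $m_j$ with $\alpha_j(x,m_j) = \ell$, and because $\alpha_j(x,\cdot)$ is a blocked bijection whose translation vectors are controlled by $K_j$ (with $K_j/L_j < \epsilon_j$), the preimage $m_j$ cannot be too large — it stays within a window on which $\alpha_j$ and $\alpha_{j+1}$ already coincide, forcing $m_{j+1} = m_j$. Hence $\alpha_n^{-1}(x,\ell)$ is eventually constant in $n$, the limit exists, and $\alpha(x, \lim_n \alpha_n^{-1}(x,\ell)) = \lim_n \alpha_n(x, \alpha_n^{-1}(x,\ell)) = \ell$, so the limit is $\alpha^{-1}(x,\ell)$.

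The one point requiring a little care — and the main obstacle — is the bookkeeping that shows the preimage $m_j$ of a fixed $\ell$ under $\alpha_j(x,\cdot)$ stays in the ``already stabilized'' window. Unlike the forward direction, where we fix the input $n$ and only need $i > |n|$, here we fix the output $\ell$ and must bound the input; this uses that $\beta_{j+1} = \alpha_{j+1}\circ\alpha_j^{-1}$ is a $(K_{j+1}-L_{j+1}, K_{j+1}, L_{j+1}, \epsilon_{j+1})$-blocked cocycle, so $\alpha_{j+1}(x,m) - \alpha_j(x,m)$ differs from $0$ by a vector in $\bar C_{K_{j+1}}$, and on the rigid block containing $x$ it is exactly $0$. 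Combined with the fact (from the $F_j$ argument) that $x$ together with a $\bar C_{j+B_j}$-neighborhood in the $S_j$-orbit lies in one rigid block, and that $|\ell| < j + B_j$, the preimage $m_j$ lands in that same rigid block's footprint in $\Z$, where all the $\alpha_k$ ($k \ge j$) agree. So the argument is essentially a transcription of the Proposition~6.3 proof with the roles of input and output swapped, plus the elementary fact that a pointwise-convergent sequence of bijections of $\Z$ that is eventually locally constant has a pointwise-convergent inverse sequence.
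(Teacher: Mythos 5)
Your proposal is correct and rests on the same underlying mechanism as the paper's argument, namely that on the full-measure set $\bigcup_i\bigcap_{j\geq i}F_j$ the values of the cocycles stabilize, so that $\alpha_j^{-1}(x,\ell)$ is eventually constant and the constant value solves $\alpha(x,m)=\ell$. The difference is one of completeness rather than of route: the paper's own proof is a two-line manipulation --- pick $j$ large, write $\alpha(x,\alpha_j^{-1}(x,\ell))=\alpha_j(x,\alpha_j^{-1}(x,\ell))=\ell$, and conclude $\alpha^{-1}(x,\ell)=\alpha_j^{-1}(x,\ell)$ --- which silently uses agreement of $\alpha$ with $\alpha_j$ at the $j$-dependent input $\alpha_j^{-1}(x,\ell)$, not merely at $\ell$; your rigid-block bookkeeping is precisely the justification of that step, so your write-up fills in what the paper glosses over. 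One small phrasing point: the relevant constraint is not really that the preimage $m_j$ lies in a stabilized window of inputs, but that the output $\ell=\alpha_j(x,m_j)$ lies within the rigid block of $\beta_{j+1}$ in the $S_j$-orbit coordinates (since $\alpha_{j+1}(x,m_j)=\beta_{j+1}(x,\alpha_j(x,m_j))$), which is guaranteed by $|\ell|<j+B_j$ and $x\in F_j$; with that observation your bound on $|m_j|$ is not even needed, and $m_{j+1}=m_j$ follows directly from injectivity of $\alpha_{j+1}(x,\cdot)$. So the proposal is sound, and if anything slightly over-engineered relative to the paper's proof.
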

\begin{proof}
For almost every $x\in X$ and every $n\in\Z$, we can find an $i\in\N$ such that for all $i\geq j$, $\alpha(x,n)=\alpha_{j}(x,n)$ so that $\alpha(x,\alpha_{j}^{-1}(x,n))=\alpha_{j}(x,\alpha_{j}^{-1}(x,n))=n$ which implies that $\alpha^{-1}(x,n)=\alpha_{j}^{-1}(x,n)$ for all $j\geq i$. Hence, we have that $\alpha^{-1}=\lim_{n\to\infty}\alpha_{n}^{-1}$.
\end{proof}
With Lemma 6.4 in hand, we can make the following definition, which is the analog of $D_{n}^{k}$ for $\alpha^{-1}$.
\begin{definition} We say a point $x\in X$ has its $\alpha^{-1}$ \textbf{cocycle value for} $\ell$ \textbf{fixed at stage} $N$ if $N$ is the smallest natural number such that $\alpha_{N}^{-1}(x,\ell)=\alpha^{-1}(x,\ell)$. We will let $E^{\ell}_{N}$ denote the set of all points whose cocycle value for $\ell$ is fixed at stage $N$.
\end{definition}
We can also bound the measure of $E_{n}^{\ell}$ above by a bound similar to the one used for the corresponding sets for $\alpha$.

\begin{lemma}
We have for a given $\ell\in\Z$ and  $n>1$ that \begin{align*}
\mu(E^{\ell}_{n})<2\sum_{i=n-1}^{\infty}\epsilon_{i}+\sum_{i=n-1}^{\infty}\frac{\ell}{L_{i}-K_{i}}.
\end{align*}

\end{lemma}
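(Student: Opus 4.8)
The plan is to follow the proof of Lemma 6.3 closely at the combinatorial level, exploiting the fact that tracking $\alpha^{-1}$ rather than $\alpha$ eliminates the need for any term involving $B_{i}$. As there, it suffices to establish the two-sided estimate
\begin{align*}
1-\sum_{i=1}^{n-1}\mu(E^{\ell}_{i})-2\sum_{i=n}^{\infty}\epsilon_{i+1}-\sum_{i=n}^{\infty}\frac{\ell}{L_{i+1}-K_{i+1}}\ \leq\ \mu(E^{\ell}_{n})\ \leq\ 1-\sum_{i=1}^{n-1}\mu(E^{\ell}_{i}),
\end{align*}
and then to feed the lower bound at stage $n-1$ back into the trivial upper bound at stage $n$, exactly as in the chain of inequalities following the statement of Lemma 6.3. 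The upper bound is immediate from the disjointness of the $E^{\ell}_{i}$, i.e.\ from $\bigsqcup_{i=1}^{n}E^{\ell}_{i}\subset X$.

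For the lower bound I would isolate the following sufficient condition for $x\in E^{\ell}_{n}$. From the relation $\beta_{m}=\alpha_{m}\circ\alpha_{m-1}^{-1}$ one has $\alpha_{m}^{-1}(x,\cdot)=\alpha_{m-1}^{-1}\bigl(x,\beta_{m}^{-1}(x,\cdot)\bigr)$ for a.e.\ $x$. If $x$ lies in a rigid block of $\beta_{m}$, on which the underlying blocked bijection acts by a single translation, and if $S_{m-1}^{\ell}(x)$ lies in the \emph{same} rigid block, then reading off $\beta_{m}(x,\cdot)=\pi_{x}(\cdot)-\pi_{x}(0)$ on that block gives $\beta_{m}(x,\ell)=\ell$, hence $\beta_{m}^{-1}(x,\ell)=\ell$ by injectivity of $\beta_{m}(x,\cdot)$, and therefore $\alpha_{m}^{-1}(x,\ell)=\alpha_{m-1}^{-1}(x,\ell)$. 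Consequently, writing $R_{j}$ for the set of $x$ such that $x$ and $S_{j}^{\ell}(x)$ lie in a common rigid block of $\beta_{j+1}$, if $x\notin\bigcup_{i=1}^{n-1}E^{\ell}_{i}$ and $x\in\bigcap_{j\geq n}R_{j}$, then $\alpha_{n}^{-1}(x,\ell)=\alpha_{n+1}^{-1}(x,\ell)=\cdots=\alpha^{-1}(x,\ell)$ while $\alpha_{n-1}^{-1}(x,\ell)\neq\alpha^{-1}(x,\ell)$, so $x\in E^{\ell}_{n}$. The point to notice here, and the reason the present bound is cleaner than that of Lemma 6.3, is that the displacement that has to stay inside a rigid block is always just $\ell$, whereas in Lemma 6.3 it was $\alpha_{j-1}(x,k)$, which can be as large as $|k|+B_{j-1}$; this is what removes the $B$-terms.

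It then remains to estimate $\mu(X\setminus R_{j})$ for $j\geq n$. A point can fail to lie in $R_{j}$ only if (i) $x$ lies in no block of $\beta_{j+1}$, a set of measure $<\epsilon_{j+1}$; (ii) $x$ lies in a block of $\beta_{j+1}$ but in its filler set, a set of measure $<K_{j+1}/L_{j+1}<\epsilon_{j+1}$; or (iii) $x$ lies in a rigid block of $\beta_{j+1}$ but within $\ell$ positions of the relevant endpoint, so that $S_{j}^{\ell}(x)$ leaves the block, and since the rigid blocks are $S_{j}$-intervals of length $L_{j+1}-K_{j+1}$ this set has measure at most $\ell/(L_{j+1}-K_{j+1})$. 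Thus $\mu(X\setminus R_{j})<2\epsilon_{j+1}+\ell/(L_{j+1}-K_{j+1})$; summing over $j\geq n$ gives the displayed lower bound on $\mu(E^{\ell}_{n})$, and the conclusion follows after one more application of the lower bound at stage $n-1$, exactly as in Lemma 6.3.

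The only genuinely delicate step is the rigid-block computation in the second paragraph: one must unwind the definition of a $(J,K,L)$-permutation to confirm that a blocked bijection translating a rigid block by a fixed vector forces both $\beta_{m}(x,\ell)$ and $\beta_{m}^{-1}(x,\ell)$ to equal $\ell$ whenever $x$ and $S_{m-1}^{\ell}(x)$ sit in that block; everything else is the bookkeeping already carried out in Lemma 6.3.
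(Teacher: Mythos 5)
Your proposal is correct and follows essentially the same route as the paper's proof: the same two-sided estimate on $\mu(E^{\ell}_{n})$, the same bootstrapping of the stage-$(n-1)$ lower bound into the trivial upper bound, and the same enumeration of failure events (outside the blocks, in a filler set, or within $|\ell|$ of the end of a rigid block) with the same measure bounds. The only differences are cosmetic: you make explicit the cocycle computation $\beta_{m}(x,\ell)=\ell$ on a common rigid block that the paper leaves implicit, and your indexing via $\beta_{j+1}$ for $j\geq n$ yields a bound at least as strong as the stated one.
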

\begin{proof}
Let $\ell\in\Z$ and $n>1$ be given. Analogous to the proof of  Lemma 6.3, the desired inequality will hold if we can show that the following inequality holds:
\begin{align*}
1-\sum_{i=1}^{n-1}\mu(E^{\ell}_{i})-\sum_{i=n}^{\infty}\mu(X\setminus S_{i-1}^{C_{M_{i}}}B_{i})-\sum_{i=n}^{\infty}\frac{K_{i}}{L_{i}}-\sum_{i=n}^{\infty}\frac{\ell}{L_{i}-K_{i}}\leq \mu(E^{\ell}_{n})\leq 1-\sum_{i=1}^{n-1}\mu(E^{\ell}_{i}).
\end{align*}
First, we can see that the upper bound for $\mu(E^{\ell}_{n})$ holds  by noting that $E_{n}^{\ell}\subset X\setminus \bigg(\cup_{i=1}^{n-1}E_{i}^{\ell}\bigg)$. For the lower bound, note that $x\in X$ may not be in $E_{n}^{\ell}$ if $x$ satisfies at least one of the following four conditions:
\begin{enumerate}
\item $x\in \bigcup_{i=1}^{n-1}E_{i}^{\ell}$,
\item there is an $i\geq n$ such that $x\notin S_{i-1}^{C_{M_{i}}}B_{i}$,

\item there is an $i\geq n$ such that $x$ is in a block, but not a rigid block, of $\beta_{i}$, or
\item there is an $i\geq n$ such that $x$ is in a rigid block of $\beta_{i}$ but $S_{i-1}^{\ell}(x)$ is not. 
\end{enumerate}
So, the measure of the set of points that satisfy each particular condition is bounded above by $\sum_{i=1}^{n-1}\mu(E^{\ell}_{i})$, $\sum_{i=n}^{\infty}\mu(X\setminus S_{i-1}^{C_{M_{i}}}B_{i})$, $\sum_{i=n}^{\infty}\frac{K_{i}}{L_{i}}$, and $\sum_{i=n}^{\infty}\frac{\ell}{L_{i}-K_{i}}$ respectively.
\end{proof}
\begin{proposition}
The orbit equivalence between $S$ and $S^{\alpha^{-1}}$ given by the cocycle $\alpha$ is Shannon.
\end{proposition}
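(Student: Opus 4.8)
The plan is to mirror the proof of Proposition 4.7 exactly, now using the bounds from Lemmas 6.3 and 6.6 in place of Lemmas 4.5 and 4.6. We must show that every cocycle partition $\mathscr{P}_{\alpha,k}$ (for $k\in\Z$) and every cocycle partition $\mathscr{P}_{\alpha^{-1},\ell}$ (for $\ell\in\Z$) has finite Shannon entropy; by symmetry the two cases are handled identically, so I describe the $\alpha$ case. Fix $k\in\Z$. For each $m\in\Z$ set $P_m=\{x\in X\mid \alpha(x,k)=m\}$, and for each $n\in\N$ set $D^{k,m}_n=D^k_n\cap P_m$, where $D^k_n$ is the set of points whose $\alpha$ cocycle value for $k$ is fixed at stage $n$ (Definition 6.2). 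Then $\{D^{k,m}_n\}_{n\in\N}$ is a partition of $P_m$, and the essential observation is that $D^{k,m}_n=\emptyset$ unless $m$ lies in $\bar C_{L_n}$ (or more precisely the range of possible cocycle values introduced by the blocked cocycles up through stage $n$, which is contained in a cube of side comparable to $L_n$): once $x$ is fixed at stage $n$, its value $\alpha(x,k)=\alpha_n(x,k)=\pi_x(k)-\pi_x(0)$ for the $(K_n-L_n,K_n,L_n)$-blocked bijection $\pi_x$, so $|\alpha(x,k)|\le |k|+2L_n$, say. Let $F_n$ denote this finite index set, so $|F_n|\le C L_n$ for a constant $C$, and note $D^k_n=\bigsqcup_{m\in F_n}D^{k,m}_n$.

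With this set-up, Proposition 2.2 applies once we verify the series condition
\begin{align*}
\sum_{n=1}^\infty -\mu(D^k_n)\log\frac{\mu(D^k_n)}{|F_n|}<\infty.
\end{align*}
This is where Lemma 6.3 does the work: it gives $\mu(D^k_n)<2\sum_{i=n-1}^\infty\epsilon_i+\sum_{i=n-1}^\infty\frac{|k|+B_{n-2}}{L_i}$. Using $\epsilon_i<2^{-i}$ and the growth hypotheses imposed at the start of the section — in particular $\frac{n+B_n}{L_{n+1}-K_{n+1}}<\epsilon_{n+1}$, which forces $L_i$ to grow fast relative to $B$ and hence makes $\sum_{i=n-1}^\infty\frac{|k|+B_{n-2}}{L_i}$ decay geometrically — one shows that for each fixed $k$ there is an $N$ (depending on $k$) with $\mu(D^k_n)<2^{-n/2}$ for all $n>N$. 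On the other hand, $K_n<2^{n^3}$ for large $n$ and $L_n$ is comparable to $K_n$ (from $K_n/L_n<\epsilon_n$), so $|F_n|<2^{n^3+O(1)}$ for large $n$. Then, exactly as in Proposition 4.7,
\begin{align*}
\sum_{n>N}-\mu(D^k_n)\log\frac{\mu(D^k_n)}{|F_n|}\le \sum_{n>N}-2^{-n/2}\log 2^{-n^3-n-O(1)}<\infty,
\end{align*}
and the finitely many terms $n\le N$ are harmless, so Proposition 2.2 yields finite Shannon entropy for $\mathscr{P}_{\alpha,k}$.

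For $\alpha^{-1}$ the argument is the same: fix $\ell\in\Z$, write $E^\ell_n=\bigsqcup_{j}E^{\ell,j}_n$ where $E^{\ell,j}_n=E^\ell_n\cap\{\kappa\text{-value}=j\}$, observe $E^{\ell,j}_n=\emptyset$ unless $j$ lies in a set $G_n$ of size $O(L_n)$ (the inverse blocked bijection also moves points by at most $O(L_n)$), and apply Lemma 6.6, which gives $\mu(E^\ell_n)<2\sum_{i=n-1}^\infty\epsilon_i+\sum_{i=n-1}^\infty\frac{\ell}{L_i-K_i}<2^{-n/2}$ for $n$ large, so the same estimate with $|G_n|<2^{n^3+O(1)}$ closes the argument via Proposition 2.2. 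I expect the only delicate point to be bookkeeping the size of $F_n$ (resp. $G_n$): one must confirm that the support of the cocycle value at stage $n$ — built up as a composition $\beta_n\circ\cdots\circ\beta_1$ of blocked cocycles — is contained in a cube whose side length is polynomially (indeed sub-exponentially) bounded by $K_n$, so that $\log|F_n|=O(n^3)$ and is dominated by the $2^{-n/2}$ decay of $\mu(D^k_n)$; everything else is the routine application of Proposition 2.2 already carried out in Proposition 4.7.
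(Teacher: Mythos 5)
Your overall strategy coincides with the paper's: partition each $P_m$ according to the stage at which the cocycle value is fixed, bound $\mu(D^k_n)$ via Lemma 6.3 (and $\mu(E^\ell_n)$ via Lemma 6.6), and feed the resulting estimates into Proposition 2.2; the measure estimates themselves are fine. The gap is exactly at the step you flag as delicate: the size of the finite value set $F_n$. You bound the possible values at stage $n$ by $|k|+2L_n$, so $|F_n|=O(L_n)$, and then assert that $L_n$ is comparable to $K_n$ ``from $K_n/L_n<\epsilon_n$.'' That inequality says the opposite: since $\epsilon_n<2^{-n}$, it forces $L_n>2^nK_n$, and the standing hypotheses of Section 6 impose no upper bound on $L_n$ whatsoever (only $K_n<2^{n^3}$, together with lower bounds on $L_n$ coming from $\frac{K_n}{L_n}<\epsilon_n$ and $\frac{n+B_n}{L_{n+1}-K_{n+1}}<\epsilon_{n+1}$). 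Hence $|F_n|=O(L_n)$ only gives $\log|F_n|\approx\log L_n$, which is not controlled by $n$; if, say, $L_n$ grows doubly exponentially, the series $\sum_n -\mu(D^k_n)\log\frac{\mu(D^k_n)}{|F_n|}$ with $\mu(D^k_n)\approx 2^{-n/2}$ need not converge, and the application of Proposition 2.2 collapses.

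The paper takes a different count at this point: it sets $F_n=\{\alpha_n(x,k)\mid x\in X\}$ and uses $|F_n|=K_n<2^{n^3}$, i.e.\ the number of admissible values for points whose value is fixed at stage $n$ is governed by the translation parameter $K_n$ (rigid-block translation vectors lie in $\bar{C}_{K_n}$), not by the block length $L_n$. To close your argument you would need to establish a bound of that type --- that on $D^k_n$ (possibly after a further decomposition absorbing the small filler and boundary sets) the value $\alpha(x,k)$ ranges over a set of cardinality controlled by $K_n$, or equivalently by $2^{n^3}$ --- or else add an explicit upper bound on $L_n$ to the hypotheses. The same issue occurs verbatim in your treatment of $\alpha^{-1}$, where $G_n$ is again taken of size $O(L_n)$.
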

\begin{proof}
First, we will show that the Shannon partitions of the cocycle $\alpha$ have finite Shannon entropy.
Let $k\in\Z$ be given.  For a given $\ell\in\Z$, let $P_{\ell}=\{x\in X\mid \alpha(x,k)=\ell\}$ and let $\mathscr{P}=\{P_{\ell}\mid \ell\in\Z\}$. For every $n\in\N$ and $\ell\in\Z$, set $D^{k,\ell}_{n}=D^{k}_{n}\cap P_{\ell}$ where $D^{k}_{n}$ is the set of points whose $\alpha$ cocycle value for $k$ is fixed at stage $n$. Now, note that for every $\ell\in\Z$ we have that $\{D^{k,\ell}_{n}\}_{n\in\N}$ is a partition  of $P_{\ell}$. 
Setting $F_{n}=\{\alpha_{n}(x,k)\mid x\in X\}$, we can see that $D^{k,\ell}_{n}=\emptyset$ if $\ell\notin F_{n}$. Now, by Proposition 2.2, the Shannon entropy of $\mathscr{P}$ will be finite if we show that $\sum -\mu(D_{n}^{k})\log\frac{\mu(D_{n}^{k})}{|F_{n}|}<\infty$.
By Lemma 6.3, we have that 
\begin{align*}
\mu(D^{k}_{n})<2\sum_{i=n-1}^{\infty}\epsilon_{i}+\sum_{i=n-1}^{\infty}\frac{|k|+B_{n-2}}{L_{i}-K_{i}}<3\sum_{i=n-1}^{\infty}\epsilon_{i}
\end{align*}
for large enough $n$. By our choice of $\epsilon_{n}<2^{-n}$ in our construction, there is an $N$ such that, for all $n>N$, we have that $\mu(D_{n}^{k})<2^{-n+4}$ and $|F_{n}|=K_{n}<2^{n^{3}}$. So,  this implies that 
\begin{align*}
\sum_{n>N} -\mu(D^{k}_{n})\log\frac{\mu(D^{k}_{n})}{|F_{n}|}&\leq \sum_{n>N} -2^{-n+4}\log\frac{2^{-n+4}}{|F_{n}|}\\
&\leq \sum_{n>N}-2^{-n+4}\log 2^{-n^{3}-n+4}\\
&<\infty
\end{align*}
Hence, Proposition 2.2 implies that $\sum_{\ell\in\Z}-\mu(P_{\ell})\log \mu(P_{\ell})<\infty$.\par

Next, we will show that the same is true for Shannon partitions of $\alpha^{-1}$. Let $\ell\in\Z$ be given. For a given $k \in\Z$, let $Q_{k}=\{x\in X\mid \alpha^{-1}(x,\ell)=k\}$ and let $\mathscr{Q}=\{Q_{k}\mid k\in\Z\}$. For every $n\in\N$ and $k\in\Z$, set $E^{\ell, k}_{n}=E^{\ell}_{n}\cap P_{k}$ where $E^{\ell}_{n}$ is the set of points whose $\alpha^{-1}$ cocycle value for $\ell$ is fixed at stage $n$. Again, we wish to use Proposition 2.2. First, we see that for every $k\in\Z$ we have $\{E^{\ell, k}_{n}\}_{n\in\N}$ is a partition  of $Q_{k}$. 
Setting $F_{n}=\{\alpha^{-1}_{n}(x,\ell)\mid x\in X\}$, we can see that $E^{\ell,k}_{n}=\emptyset$ if $\ell\notin F_{n}$. So, we only need to show that $\sum -\mu(E_{n}^{\ell})\log\frac{\mu(E_{n}^{\ell})}{|F_{n}|}<\infty$.
By Lemma 6.6, we have that 
\begin{align*}
\mu(E^{\ell}_{n})<2\sum_{i=n-1}^{\infty}\epsilon_{i}+\sum_{i=n-1}^{\infty}\frac{\ell}{L_{i}-K_{i}}<3\sum_{i=n-1}^{\infty}\epsilon_{i}
\end{align*}
for large enough $n$. By our choice of $\epsilon_{n}<2^{-n}$ in our construction, there is an $N$ such that, for all $n>N$, we have that $\mu(E_{n}^{\ell})<2^{-n+4}$ and $|F_{n}|=K_{n}<2^{n^{3}}$. So,  this implies that 
\begin{align*}
\sum_{n>N} -\mu(E^{\ell}_{n})\log\frac{\mu(E^{\ell}_{n})}{|F_{n}|}&\leq \sum_{n>N} -2^{-n+4}\log\frac{2^{-n+4}}{|F_{n}|}\\
&\leq \sum_{n>N}-2^{-n+4}\log 2^{-n^{3}-n+4}\\
&<\infty
\end{align*}
So, by Proposition 2.2, we can conclude that $\sum_{k\in\Z}-\mu(Q_{k})\log \mu(Q_{k})<\infty$

\end{proof}

\end{section}
\begin{section}{Weak Mixing}
In this section, we will find sequences of natural numbers $\{K_{n}\}_{n\in\N}$ and $\{L_{n}\}_{n\in\N}$ and a sequence of real numbers $\{\epsilon_{n}\}_{n\in\N}$ so that we can construct a sequence of cocycles $\{\beta_{n}\}_{n\in\N}$ of the action $\Z\curvearrowright ^{S}(X,\mu)$ such that $\beta_{n}$ is an $(L_{n}-K_{n}, K_{n},L_{n},\epsilon_{n})$-blocked cocycle and, setting $\alpha_{n}=\beta_{n}\circ \beta_{n-1}\circ\dots\beta_{1}$ for every $n\in\N$, the limit $\alpha(x)=\lim_{n\to\infty}\alpha_{n}(x)$ exists and the action $T=S^{\alpha^{-1}}$ is weak mixing. These parameters will also be chosen in a way such that $\alpha$ is a Shannon orbit equivalence between $S$ and $T$.

\begin{definition} Let $G$ be a countable infinite group and $(X,\mathscr{B},\mu)$ be a probability space. we say an action $G\curvearrowright^{S} X$ is \textbf{weak mixing} if for all finite sets $\mathscr{F}\subset\mathscr{B}$ we have that 

\begin{align*}
\liminf_{g\to\infty}\sum_{A,B\in\mathscr{F}}|\mu(A\cap S^{g}B)-\mu(A)\mu(B)|=0.
\end{align*}
\end{definition}
The following characterization of weak mixing is the one we will use.
\begin{proposition}
Let $G$ be a group and let $G\curvearrowright^{S} X$ be an action over the probability space $(X,\mathscr{B},\mu)$. If there is a sequence $\{g_{n}\}_{n\in\N}\subset G$ such that 
\begin{align*}
\lim_{n\to \infty} \mu(A\cap S^{g_{n}}B)=\mu(A)\mu(B)
\end{align*}
for all pairs of measurable sets $A,B\in X$, then $S$ is weak mixing.
\end{proposition}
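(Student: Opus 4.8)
The plan is to show that the existence of such a sequence $\{g_n\}$ forces the weak mixing condition in Definition 7.1. First I would recall the standard fact that weak mixing of $S$ is equivalent to the statement that for every finite collection $\mathscr{F}\subset\mathscr{B}$, the Ces\`aro-type liminf in Definition 7.1 vanishes; equivalently (over a general countably infinite group with a fixed F\o lner-free formulation, or just over $\Z$), it suffices to exhibit, for each such $\mathscr{F}$, a single infinite sequence of group elements along which all the joint measures $\mu(A\cap S^g B)$ converge to the product $\mu(A)\mu(B)$ simultaneously for all $A,B\in\mathscr{F}$.

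The key steps, in order, would be: (1) Fix a finite family $\mathscr{F}$. (2) Apply the hypothesis to each of the finitely many pairs $(A,B)\in\mathscr{F}\times\mathscr{F}$: for each such pair, $\mu(A\cap S^{g_n}B)\to\mu(A)\mu(B)$ as $n\to\infty$. (3) Since $\mathscr{F}\times\mathscr{F}$ is finite, this convergence is uniform over the pairs in the sense that the finite sum $\sum_{A,B\in\mathscr{F}}|\mu(A\cap S^{g_n}B)-\mu(A)\mu(B)|$ is a finite sum of terms each tending to $0$, hence the whole sum tends to $0$ along $\{g_n\}$. (4) Conclude that the liminf over $g\to\infty$ of this sum is $0$, which is exactly Definition 7.1. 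A minor wrinkle is making sure $\{g_n\}$ actually escapes to infinity in $G$ (i.e. is not eventually constant), or more precisely that it has a subsequence tending to infinity; if $\{g_n\}$ had a constant subsequence $g$, then $\mu(A\cap S^g B)=\mu(A)\mu(B)$ for all $A,B$, which would already imply $S^g$ is the trivial-correlation map and one can then produce genuinely escaping elements, or one simply notes that in the intended applications $\{g_n\}$ is explicitly unbounded. I would phrase the hypothesis or the argument so that $g_n\to\infty$ is available.

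I expect the main (really the only) obstacle to be bookkeeping about what ``$\liminf_{g\to\infty}$'' means for a general countably infinite group $G$ and reconciling it with the sequence $\{g_n\}$: one needs $\{g_n\}$ to be a sequence leaving every finite subset of $G$, so that convergence along it bears on the liminf as $g\to\infty$. Once that is pinned down, the proof is a one-line reduction: a finite sum of null sequences is a null sequence. So the write-up would be: fix $\mathscr{F}$ finite, pass to a subsequence of $\{g_n\}$ tending to infinity if necessary (harmless, since the hypothesis passes to subsequences), and then
\begin{align*}
\liminf_{g\to\infty}\sum_{A,B\in\mathscr{F}}|\mu(A\cap S^{g}B)-\mu(A)\mu(B)|
\leq \lim_{n\to\infty}\sum_{A,B\in\mathscr{F}}|\mu(A\cap S^{g_{n}}B)-\mu(A)\mu(B)|=0,
\end{align*}
where the last equality holds because $\mathscr{F}\times\mathscr{F}$ is finite and each summand tends to $0$ by hypothesis. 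Since the sum is nonnegative, the liminf is exactly $0$, establishing weak mixing.
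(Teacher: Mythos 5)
Your proof is correct and follows essentially the same route as the paper: apply the hypothesis to each of the finitely many pairs in $\mathscr{F}\times\mathscr{F}$, note that a finite sum of null sequences is null along $\{g_{n}\}$, and bound $\liminf_{g\to\infty}$ of the sum by its value along the sequence (outside any given finite set $E\subset G$). Your explicit remark about ensuring $\{g_{n}\}$ escapes every finite subset is a detail the paper's proof glosses over, but the substance of the two arguments is identical.
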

\begin{proof}
Let $\{g_{n}\}_{n\in\N}\subset G$ such that
\begin{align*}
\lim_{n\to \infty} \mu(A\cap S^{g_{n}}B)=\mu(A)\mu(B)
\end{align*}
for all pairs of measurable sets $A,B\in X$.
Also, let a finite set $\mathscr{F}\subset \mathscr{B}$ and an $\epsilon>0$ be given. Let $E\subset G$ be a finite set. Let $L$ be a lower bound of the set 
\begin{align*}
\bigg\{\sum_{A,B\in\mathscr{F}}|\mu(A\cap S^{g}B)-\mu(A)\mu(B)|\text{ }\bigg|\text{ } g\in G\setminus E\bigg\}
\end{align*}
Following the hypothesis, we can find an $N\in\N$ such that for all $n>N$ we have that 
\begin{align*}
|\mu(A\cap S^{g_{n}}B)-\mu(A)\mu(B)|<\frac{\epsilon}{2^{|\mathscr{F}|}}
\end{align*}
 for each pair $A,B\in \mathscr{F}$ and $g_{n}\in G\setminus E$. Thus, we have for all $n<N$
 \begin{align*}
 \sum_{A,B\in\mathscr{F}}|\mu(A\cap S^{g_{n}}B)-\mu(A)\mu(B)|<\epsilon,
 \end{align*}
 which implies that $L<\epsilon$. Thus, 
 \begin{align*}
 \inf_{g\in G\setminus E}\bigg\{\sum_{A,B\in\mathscr{F}}|\mu(A\cap S^{g}B)-\mu(A)\mu(B)|\bigg\}<\epsilon.
 \end{align*}
Since $E$ and $\epsilon$ were arbitrary, we have that 
\begin{align*}
\liminf_{g\to\infty}\sum_{A,B\in\mathscr{F}}|\mu(A\cap S^{g}B)-\mu(A)\mu(B)|=0.
\end{align*}
\end{proof}

Let $\{\mathscr{P}_{n}\}_{n\in\N}$ be a sequence of measurable partitions of $X$ such that $\mathscr{P}_{n+1}$ refines $\mathscr{P}_{n}$ for every $n\geq 1$ which generate the sigma algebra of $X$ mod null sets and let $\{\epsilon_{n}\}_{n\in\N}$ be a strictly decreasing sequence of real numbers such that $\epsilon_{n}<\frac{1}{2^{n}}$ for every $n\in\N$. We will now proceed with the construction of our sequence of cocycles $\{\beta_{n}\}_{n\in\N}$ by recursively finding appropriate parameters $K_{n}$, $L_{n}$ and $M_{n}$ at the $n$-th stage so that we can build a Rokhlin tower of height $M_{n}$ whose levels over points in the base will be the blocks for the $(L_{n}-K_{n}, K_{n}, L_{n},\epsilon_{n})$-cocycle $\beta_{n}$. Before we describe the rest of the idea of the construction, we need the following definitions.
\begin{definition}
Let a probability measure preserving action $\Z\curvearrowright^{S}(X,\mu)$, a measurable ordered partition $\mathscr{P}=\{P_{i}\}_{i\in I}$ of $X$, and a finite subset $F\subset X$ be given. If $x\in X$, the $(S,\mathscr{P},F)$\textbf{-name} of $x$ is the indexed set $\{i_{k}\}_{k\in F}\subset I$ such that $S^{n}(x)\in P_{i_{k}}$. If $A\subset X$ such that for every $k\in F$ there is an $i_{k}\in I$ such that $S^{k}A\subset P_{i_{k}}$, we say the $(S,\mathscr{P},F)$\textbf{-name} of $A$ is the indexed set $\{i_{k}\}_{k\in F}\subset I$.\par
In addition, given a set $A\subset X$, we define the \textbf{distribution function} of $\mathscr{P}$ over $A$ to be the function $\dist_{A}(\mathscr{P}): I\to [0,1]$ defined by 
\begin{align*}
\dist_{A}(\mathscr{P})(i)=\frac{\mu(A\cap P_{i})}{\mu(A)}.
\end{align*}
Note that we will drop the subscript of the distribution function if we are looking at the distribution over $X$.
\end{definition}
\begin{remark} Let $A\subset X$, an ordered measurable partition $\mathscr{P}=\{P_{i}\}_{i\in I}$ of $X$ for some set $I$, and a set $\{p_{i}\}_{i\in I}\subset [0,1]$  be given. We define a distance function in the following way:
\begin{align*}
|\dist_{A}(\mathscr{P})-\{p_{i}\}_{i\in I}|=\max_{1\leq i\leq n}\{|\dist_{A}(\mathscr{P})(i)-p_{i}|\}.
\end{align*}
\end{remark}

 We will also guarantee that there is a sequence $\{\ell_{n}\}_{n\in\N}$ of integers such that a certain subsequence of our actions satisfy a progressively stronger with respect to the sequence version of the following approximate mixing condition.
\begin{definition} If $\delta>0$, $\mathscr{P}$ is a partition of $X$, and $\ell\in\N$, we say that an action $S$ is $\delta$\textbf{-mixing relative to} $(\ell,\mathscr{P})$ if  we have that 
\begin{align*}
|\dist(\mathscr{P}\vee S^{-\ell}\mathscr{P})-\dist(\mathscr{P})\times \dist(\mathscr{P})|<\delta.
\end{align*}
\end{definition}
We will also need the following definition.
\begin{definition}
Let a probability measure preserving action $\Z\curvearrowright^{S}(X,\mu)$, measurable ordered partitions $\mathscr{P}=\{P_{i}\}_{i\in I}$ and $\mathscr{Q}=\{Q_{i}\}_{i\in I}$ of $X$ over the same indexing set $I$, a finite subset $F\subset X$, and an $\epsilon>0$ be given. For an $x\in X$ and an $i\in I$, let $a_{i}=\frac{|\{k\in F\mid S^{k}(x)\in P_{i}\}|}{|F|}$. We say that the $\mathscr{P}$\textbf{-distribution of the} $(S,\mathscr{P},F)$\textbf{-name of }$x$\textbf{ is within} $\epsilon$\textbf{ of the distribution of} $\mathscr{Q}$ if
\begin{align*}
|\dist(\mathscr{Q})-\{a_{i}\}_{i\in I}|<\epsilon.
\end{align*}
\end{definition}

\par
We will first construct the tower $S^{C_{M_{1}}}B_{1}$ and the cocycle $\beta_{1}$. First, choose a positive integer $K_{1}$ so that all $x$ in a set $E_{1}\subset X$, $\mu(X\setminus E_{1})<\epsilon_{1}$, have that the $\mathscr{P}_{1}$-distribution of their $(S, \mathscr{P}_{1}, C_{K_{1}})$-name is within $\epsilon_{1}$ of the distribution of $\mathscr{P}_{1}$. Also, choose $L_{1}>\frac{K_{1}}{\epsilon_{1}}$ and $M_{1}>\frac{L_{1}}{\epsilon_{1}}$ such that $M_{1}$ is a multiple of $L_{1}$. Using the strong form of the Rokhlin lemma, we can find a set $B_{1}\subset X$ such that $S^{C_{M_{1}}}B_{1}$ is an $\epsilon_{1}$-tower of $S$ and for all $b\in B_{1}$, we have an $F\subset C_{M_{1}}$ such that $|F|>(1-\epsilon_{1})|C_{M_{1}}|$ and for every $n\in F$ we have $S^{n}b\in E_{1}$. Since $L_{1}$ is a divisor of $M_{1}$, we can write $S^{C_{M_{1}}}B_{1}$ as a disjoint union of the sets $S^{C_{L_{1}}+m}(b)$ for each $b\in B_{1}$ and $m\in L_{1}C_{\frac{M_{1}}{L_{1}}}$.  These will be the blocks for the cocycle $\beta_{1}$.\par

Now we will associate a permutation to each one of these blocks, which we do in the same exact manner as Fieldsteel-Friedman. First, for every $k\in K_{1}$, we choose an $(L_{1}-K_{1},K_{1},L_{1})$-permutation $\pi_{k}$ of $C_{L_{1}}$ which translates $C_{L_{1}-K_{1}}$ by $k$. Next, we construct a family $\Sigma_{1}$ of permutations of $C_{M_{1}}$ by choosing in every possible way a $k\in K_{1}$ for each $m\in L_{1}C_{\frac{M_{1}}{L_{1}}}$ so that $\pi_{k}$ acts on $C_{m+L_{1}}$. Finally, we select a measurable function $\sigma_{1}: B_{1}\to \Sigma_{1}$ so that, for each atom $A$ which belongs to the pure partition of $B_{1}$ with respect to $\mathscr{P}_{1}$, we have that $\sigma_{1}|_{A}$ is uniformly distributed on $\Sigma_{1}$. Now that $\sigma_{1}$ specifies a permutation for each block, we have produced a cocycle $\beta _{1}$ and an action $S_{1}=S^{\beta_{1}^{-1}}$. 

 \par
Now, to define the rest of the cocycles in the sequence we first define the sequence $\{K_{n}\}_{n\in\N}$ by the formula $K_{n+1}=K_{n}+1$ for $n\geq 1$. Assume that we have constructed cocycles $\{\beta_{i}\}_{1\leq i\leq n}$ and $\{\alpha_{i}\}_{1\leq i\leq n}$ which are defined by the formula $\alpha_{i}=\beta_{i}\circ\beta_{i-1}\circ\dots\circ\beta_{1}$, and actions $S_{i}=S^{\alpha_{i}^{-1}}$ whenever $1\leq i\leq n$ so that each $\beta_{i}$ is an $(L_{i}-K_{i},K_{i},L_{i},\epsilon_{i})$-blocked cocycle between $S_{i-1}$ and $S_{i}$ for some $L_{i}$. We also create a subsequence $\{K_{n_{i}}\}_{i\in\N}$ of the $\{K_{n}\}_{n\in\N}$ such that $K_{n_{1}}=K_{1}$ with specific properties which we will detail below. We will now describe how to create $\beta_{n+1}$. Let $K_{n_{j}}$ be the largest value in $\{K_{1},\dots, K_{n}\}$ within our designated subsequence $\{K_{n_{i}}\}_{i\in\N}$. Set $n_{j+1}$ to be the smallest value greater than $n_{j}$  such that, by the ergodic theorem, we can guarantee that there is a set $E_{j+1}$ with $\mu(X\setminus E_{j+1})<\epsilon_{j+1}$ such that the  $\mathscr{P}_{j+1}$-distribution of the $(S_{n_{j}},\mathscr{P}_{j+1},-C_{K_{n_{j+1}}})$-name of each point in $E_{j+1}$ is within $\epsilon_{j+1}$ of the distribution of $\mathscr{P}_{j+1}$. We will choose our parameters $L_{n+1}$ and $M_{n+1}$ and the base $B_{n+1}$ for our tower depending on the following two cases.\bigskip

\emph{Case 1}:  $n+1=n_{j+1}$\bigskip

First, we choose an $L_{n+1}$ large enough such that $\frac{K_{n+1}^{3}}{L_{n+1}}<\epsilon_{n+1}$ and $\frac{n+\sum_{i=1}^{n}L_{i}}{L_{n+1}-K_{n+1}}<\epsilon_{n+1}$. Now, we choose an $M_{n+1}$ such that $M_{n+1}>\frac{L_{n+1}}{\epsilon_{n+1}}$ and $M_{n+1}>M_{n}$, $M_{n+1}$ is a multiple of $L_{n+1}$, and, using the strong form of the Rokhlin lemma and the ergodic theorem, we can find a $B_{n+1}\subset X$ such that 
\begin{enumerate}
\item $(C_{M_{n+1}},B_{n+1})$ forms an $\frac{\epsilon_{n+1}}{K_{n+1}^{2}}$-tower of $S_{n}$,
\item For all $b\in B_{n+1}$ there is a set $F\subset C_{M_{n+1}}$ such that $|F|>(1-\epsilon_{n+1})|C_{M_{n+1}}|$ and for every $k\in F$ we have $S_{n}^{k}b\in E_{j+1}$, and
\item setting $O_{j}=\{x\mid S_{n_{j}}(x)=S_{n}(x)\}$, we have that the $\{O_{j},X\setminus O_{j}\}$-distribution of each $(S_{n}, \{O_{j},X\setminus O_{j}\}, C_{M_{n+1}})$-name for points in $B_{n+1}$ are within $\frac{\epsilon_{n+1}}{K_{n+1}}$ of the distribution of $\{O_{j},X\setminus O_{j}\}$. 
\end{enumerate}
\bigskip

\emph{Case 2}:  $n+1\neq n_{j+1}$\bigskip

 For this case, we choose an $L_{n+1}$ large enough so that $\frac{K_{n_{j+1}}^{3}}{L_{n+1}}<\epsilon_{n+1}$ and $\frac{n+\sum_{i=1}^{n}L_{i}}{L_{n+1}-K_{n+1}}<\epsilon_{n+1}$. Next, we choose an $M_{n+1}$ such that $M_{n+1}>\frac{L_{n+1}}{\epsilon_{n+1}}$, and $M_{n+1}>M_{n}$, and $M_{n+1}$ is a multiple of $L_{n+1}$. We can find a $B_{n+1}\subset X$ such that  $(C_{M_{n+1}},B_{n+1})$ forms an $\frac{\epsilon_{n+1}}{K_{n_{j+1}}^{2}}$-tower of $S_{n}$.\par
In either case, we choose our blocks and blocked permutations to define $\beta_{n+1}$ in exactly the same way. Since $L_{n+1}$ is a divisor of $M_{n+1}$, we can write $S^{C_{M_{n+1}}}B_{n+1}$ as a disjoint union of the sets $S^{C_{L_{n+1}}+m}(b)$ for each $b\in B_{n+1}$ and $m\in L_{n+1}C_{\frac{M_{n+1}}{L_{n+1}}}$.  These will be the blocks for the cocycle $\beta_{n+1}$. Now we will associate a permutation to each one of these blocks, which we do in a manner which is analogous to the construction of $\beta_{1}$. First, for every $k\in K_{n+1}$, we choose an $(L_{n+1}-K_{n+1},K_{n+1},L_{n+1})$-permutation $\pi_{k}$ of $C_{L_{n+1}}$ which translates $C_{L_{n+1}-K_{n+1}}$ by $k$. Next, we construct a family $\Sigma_{n+1}$ of permutations of $C_{M_{n+1}}$ by choosing in every possible way a $k\in K_{n+1}$ for each $m\in L_{n+1}C_{\frac{M_{n+1}}{L_{n+1}}}$ so that $\pi_{k}$ acts on $C_{m+L_{n+1}}$. Finally,    if $n+1=n_{j+1}$, we select a measurable function $\sigma_{n+1}: B_{n+1}\to \Sigma_{n+1}$ so that, for each atom $A$ which belongs to the pure partition of $B_{n+1}$ with respect to $\mathscr{P}_{j+1}\vee \{O_{j},X\setminus O_{j}\}$, we have that $\sigma_{n+1}|_{A}$ is uniformly distributed on $\Sigma_{n+1}$. Now that $\sigma_{n+1}$ specifies a permutation for each block, we have produced a cocycle $\beta _{n+1}$ and an action $S_{n+1}=S_{n}^{\beta_{n+1}^{-1}}$.\par

 \par
We will now show how to get our relative weak mixing property to hold for each $S_{n_{i}}$.
\begin{remark} If $j\in\N$, we have that, by our choice of $\beta_{n_{j}}$, the column levels over each atom A of the pure partition of $B_{n_{j}}$ with respect to $\mathscr{P}_{j}\vee \{O_{j-1},X\setminus O_{j-1}\}$ are permuted according to the block permutations of $\beta_{n_{j}}$ to produce a tower $S_{n_{j}}^{C_{M_{n_{j}}}}\bar{B}_{n_{j}}$ where
\begin{align*}
\bar{B}_{n_{j}}=\{S_{n_{j-1}}^{m}(x)\mid x\in B_{n_{j}}\text{ and } \sigma_{n_{j}}(x,m)=1\}.
\end{align*}
We let $\bar{A}$ be the subset of $\bar{B}_{n_{j}}$ which corresponds to the atom $A$ of $B_{n_{j}}$.
\end{remark}

 First, we will need the following measure theory trick.
 \begin{lemma} Let $\epsilon>0$ and a probability space $(X,\mu)$ be given. Given measurable sets $A\subset X$ and $B\subset X$ of nonzero measure, we can find $\delta_{1},\delta_{2}>0$ such that  $\delta_{1},\delta_{2}<\epsilon$ and we can find a finite measurable partition $\mathscr{A}$ of a set $A'\subset A$ such that $\mu(A\setminus A')<\delta_{1}$ and finite measurable partition of $\mathscr{B}$ of a set $B'\subset B$ such that $\mu(B\setminus B')<\delta_{2}$ such that all sets from both partitions have the same measure. 
\end{lemma}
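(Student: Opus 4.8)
The plan is to reduce the statement to a single claim: given a measurable set $S$ of nonzero measure in an atomless probability space and any $\eta > 0$, one can find a measurable $S' \subset S$ with $\mu(S \setminus S') < \eta$ that admits a finite partition into pieces all of the same prescribed measure $\theta$, provided $\theta$ is small enough. Granting this, I would first pick a common target cell-size $\theta > 0$ that is small relative to $\epsilon$, $\mu(A)$ and $\mu(B)$ (for instance any $\theta < \epsilon \cdot \min(\mu(A),\mu(B))$), apply the claim to $A$ with $\eta = \delta_1$ and to $B$ with $\eta = \delta_2$ for $\delta_1, \delta_2 < \epsilon$ to be chosen, and obtain $A' \subset A$, $B' \subset B$ together with finite partitions $\mathscr{A}$ of $A'$ and $\mathscr{B}$ of $B'$ all of whose atoms have measure exactly $\theta$. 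Since the cell size $\theta$ is the same for both, all sets from both partitions have equal measure, which is what is required.

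For the claim itself, I would use atomlessness of $(X,\mu)$, which on a standard Lebesgue space gives a measurable isomorphism of $S$ (rescaled) with an interval, or more directly Sierpiński's theorem: for any measurable $S$ and any $0 \le r \le \mu(S)$ there is a measurable subset of $S$ of measure exactly $r$. Using this repeatedly, write $\mu(S) = k\theta + \rho$ with $k = \lfloor \mu(S)/\theta \rfloor$ and $0 \le \rho < \theta$, then carve $S$ into $k$ disjoint measurable pieces each of measure $\theta$ and one leftover piece of measure $\rho$; set $S'$ to be the union of the $k$ equal pieces, so $\mu(S \setminus S') = \rho < \theta$. Choosing $\theta$ small enough at the outset (namely $\theta < \eta$) makes $\mu(S \setminus S') < \eta$, and since we also need $\theta \le \mu(S)$ so that at least one cell exists, shrinking $\theta$ further is harmless. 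Applying this with $\eta = \delta_1 := \theta$ for $A$ and $\eta = \delta_2 := \theta$ for $B$ (the same $\theta$), and noting $\theta < \epsilon$ by our choice, yields the conclusion; note we may just take $\delta_1 = \delta_2 = \theta$.

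The only mildly delicate point — and the step I expect to be the main obstacle, though it is standard — is justifying the exact-measure subset selection (Sierpiński / the intermediate value property for nonatomic measures): one shows the function $r \mapsto \mu(S \cap C_r)$ is continuous for a suitable increasing family $\{C_r\}$ exhausting $S$, e.g. obtained from a measurable bijection with $[0,\mu(S)]$, and then invokes the intermediate value theorem. I would cite this as a known fact about atomless Lebesgue spaces (which the paper has already assumed all its probability spaces to be) rather than prove it from scratch. Everything else is bookkeeping: the common cell size $\theta$ forces the "same measure" condition across the two partitions, and choosing $\theta < \epsilon\min(\mu(A),\mu(B))$ simultaneously handles the $\delta_1,\delta_2 < \epsilon$ requirement and guarantees each partition is nonempty.
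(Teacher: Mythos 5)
Your proof is correct, and it takes a slightly different route from the paper's. The paper first perturbs the two sets: it picks $\delta_{1},\delta_{2}<\min(\epsilon,\mu(A),\mu(B))$ so that the ratio $\frac{\mu(A)-\delta_{1}}{\mu(B)-\delta_{2}}$ is rational, writes it as $\frac{m}{n}$, and then partitions all but $\delta_{1}$ of $A$ uniformly into $m$ pieces and all but $\delta_{2}$ of $B$ uniformly into $n$ pieces, so the common piece measure $\frac{\mu(A)-\delta_{1}}{m}=\frac{\mu(B)-\delta_{2}}{n}$ falls out of the rationality condition. You instead fix the common cell size $\theta$ at the outset and tile each of $A$ and $B$ by $\lfloor \mu(\cdot)/\theta\rfloor$ pieces of measure exactly $\theta$, discarding a remainder of measure less than $\theta$; the equality of measures across the two partitions is then automatic, at the cost of invoking the Sierpi\'{n}ski/intermediate-value property of atomless measures explicitly and repeatedly. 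Note that the paper's ``partition uniformly into $m$ many sets'' step rests on exactly the same atomlessness fact, just left implicit, so neither argument uses more than the standing assumption that the spaces are atomless Lebesgue; your version trades the small number-theoretic adjustment (rational ratio) for a direct choice of cell size, which is arguably more transparent, while the paper's version lets it keep the exact-subset machinery in the background. Your bookkeeping (taking $\delta_{1}=\delta_{2}=\theta<\epsilon$ and $\theta\leq\min(\mu(A),\mu(B))$ so each partition is nonempty) is sound.
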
 
\begin{proof}
First, we choose $\delta_{1},\delta_{2}$ such that $0< \delta_{1},\delta_{2}<\min\bigg(\epsilon,\mu(A),\mu(B)\bigg)$ and $\frac{\mu(A)-\delta_{1}}{\mu(B)-\delta_{2}}$ is a rational number. So, we can find positive integers $n$ and $m$ such that 
\begin{align*}
\frac{m}{n}=\frac{\mu(A)-\delta_{1}}{\mu(B)-\delta_{2}}
\end{align*}
Hence, if we partition all but $\delta_{1}$ of $A$ uniformly into $m$ many sets and all but $\delta_{2}$ of $B$ uniformly into $n$ many sets, we've produced two partitions from which any set from either partition will have the same measure.
\end{proof}
\begin{lemma} Let a probability measure preserving action $\Z\curvearrowright^{S}(X,\mu)$, a measurable ordered partition $\mathscr{P}=\{P_{i}\}_{i\in I}$ of $X$, and a finite subset $F\subset X$ be given. if there are disjoint $A,B\subset X$ and an $\epsilon>0$ such that $\mu(A)=\mu(B)$ and  we have that, for some $\{r_{i}\}_{i\in I}$,
\begin{align*}
|\dist_{A}(\mathscr{P})-\{r_{i}\}_{i\in I}|<\epsilon
\end{align*}
and
\begin{align*}
|\dist_{B}(\mathscr{P})-\{r_{i}\}_{i\in I}|<\epsilon
\end{align*}
then
\begin{align*}
|\dist_{A \sqcup B}(\mathscr{P})-\{r_{i}\}_{i\in I}|<\epsilon.
\end{align*}
\end{lemma}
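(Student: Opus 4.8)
The plan is to exploit the hypothesis $\mu(A)=\mu(B)$ to express the distribution of $\mathscr{P}$ over $A\sqcup B$ as the arithmetic mean of the distributions over $A$ and over $B$, and then estimate coordinatewise via the triangle inequality. Since the distance of Remark 7.9 is the maximum over the (finite) index set $I$ of the coordinatewise differences, it suffices to control $|\dist_{A\sqcup B}(\mathscr{P})(i)-r_{i}|$ uniformly in $i$.

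First I would fix $i\in I$ and use disjointness of $A$ and $B$ together with $\mu(A)=\mu(B)>0$ to compute
\begin{align*}
\dist_{A\sqcup B}(\mathscr{P})(i)=\frac{\mu((A\sqcup B)\cap P_{i})}{\mu(A\sqcup B)}=\frac{\mu(A\cap P_{i})+\mu(B\cap P_{i})}{2\mu(A)}=\tfrac{1}{2}\dist_{A}(\mathscr{P})(i)+\tfrac{1}{2}\dist_{B}(\mathscr{P})(i).
\end{align*}
Then, writing $r_{i}=\tfrac{1}{2}r_{i}+\tfrac{1}{2}r_{i}$ and applying the triangle inequality,
\begin{align*}
|\dist_{A\sqcup B}(\mathscr{P})(i)-r_{i}|\leq\tfrac{1}{2}|\dist_{A}(\mathscr{P})(i)-r_{i}|+\tfrac{1}{2}|\dist_{B}(\mathscr{P})(i)-r_{i}|<\tfrac{1}{2}\epsilon+\tfrac{1}{2}\epsilon=\epsilon,
\end{align*}
where the strict bound on each of the two summands is exactly the content of the two hypotheses $|\dist_{A}(\mathscr{P})-\{r_{i}\}_{i\in I}|<\epsilon$ and $|\dist_{B}(\mathscr{P})-\{r_{i}\}_{i\in I}|<\epsilon$. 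Taking the maximum over $i\in I$ of the left-hand side then gives $|\dist_{A\sqcup B}(\mathscr{P})-\{r_{i}\}_{i\in I}|<\epsilon$, as desired.

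There is essentially no obstacle here; this is a convexity estimate. The only points worth a moment's care are that the denominators are nonzero (which is why $A$ and $B$ are assumed of nonzero, hence equal and positive, measure), that the averaging identity is where the equal-measure hypothesis is used — without it one would instead get the convex combination with weights $\mu(A)/(\mu(A)+\mu(B))$ and $\mu(B)/(\mu(A)+\mu(B))$, for which the same triangle-inequality bound still holds — and that the maximum over $I$ of finitely many quantities each strictly less than $\epsilon$ is again strictly less than $\epsilon$.
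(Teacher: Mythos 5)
Your argument is correct and is essentially the same as the paper's: both express $\dist_{A\sqcup B}(\mathscr{P})(i)$ as the average $\tfrac{1}{2}\dist_{A}(\mathscr{P})(i)+\tfrac{1}{2}\dist_{B}(\mathscr{P})(i)$ using $\mu(A)=\mu(B)$ and disjointness, then apply the triangle inequality coordinatewise and take the maximum over $i\in I$. No differences worth noting.
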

\begin{proof}
Let $i\in I$ be given. Then, we have that
\begin{align*}
\frac{\mu(P_{i}\cap(A\sqcup B))}{\mu(A\sqcup B)}=\frac{\mu(P_{i}\cap A)+ \mu(P_{i}\cap B))}{2\mu(A)}
\end{align*}
Thus,
\begin{align*}
|\frac{\mu(P_{i}\cap(A\sqcup B))}{\mu(A\sqcup B)}-r_{i}|&=|\frac{\mu(P_{i}\cap A)+ \mu(P_{i}\cap B))}{2\mu(A)}-r_{i}|=|\frac{\mu(P_{i}\cap A)}{2\mu(A)}+ \frac{\mu(P_{i}\cap B))}{2\mu(B)}-r_{i}|\\
&\leq\frac{1}{2}|\frac{\mu(P_{i}\cap A)}{\mu(A)} -r_{i}|+\frac{1}{2}|\frac{\mu(P_{i}\cap B)}{\mu(B)} -r_{i}|\\&<\frac{\epsilon}{2}+\frac{\epsilon}{2}=\epsilon.
\end{align*}
Since this holds for every $i\in I$, we have that
\begin{align*}
|\dist_{A \sqcup B}(\mathscr{P})-\{r_{i}\}_{i\in I}|<\epsilon.
\end{align*} 
\end{proof}
Now, we can use these lemmas to prove the following fact. 
\begin{lemma}
Let $i\in\N$ and $\ell\in\Z$ be given. If there is an $\epsilon>0$ such that for every atom $A$ of $B_{n_{i}}$ of the pure partition with respect to $\mathscr{P}_{i}\vee \{O_{j-1},X\setminus O_{j-1}\}$ we have that
\begin{align*}
|\dist_{S_{n_{i}}^{C_{M_{i}}}\bar{A}}\bigg( \mathscr{P}_{i}\vee S_{n_{i}}^{-\ell}\mathscr{P}_{i}\bigg)-\dist(\mathscr{P}_{i})\times \dist(\mathscr{P}_{i})|<\epsilon,
\end{align*}
then 
\begin{align*}
|\dist(\mathscr{P}_{i}\vee S_{n_{i}}^{\ell_{i}}\mathscr{P}_{i})-\dist(\mathscr{P}_{i})\times \dist(\mathscr{P}_{i})|<\epsilon+\epsilon_{n_{i}}.
\end{align*}
\end{lemma}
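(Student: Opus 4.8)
The plan is to transfer the given column-by-column estimates to a global one in two steps: average the hypothesis over the columns of the permuted Rokhlin tower $Y:=S_{n_i}^{C_{M_{n_i}}}\bar B_{n_i}$ of Remark~7.7 to bound the distribution of $\mathscr{Q}:=\mathscr{P}_i\vee S_{n_i}^{\ell_i}\mathscr{P}_i$ on $Y$, and then absorb the small complement $X\setminus Y$. Throughout, write $\mathscr{P}_i=\{P_k\}_{k\in I}$, regard $\mathscr{Q}$ as an ordered partition indexed by $I\times I$, and let $p$ denote the target distribution, $p(a,b)=\mu(P_a)\mu(P_b)$.

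First I would reconcile the sign $-\ell$ appearing in the hypothesis with $\ell_i$ in the conclusion. Since $S_{n_i}$ preserves $\mu$, we have $\mu\bigl(P_a\cap S_{n_i}^{-\ell_i}P_b\bigr)=\mu\bigl(S_{n_i}^{\ell_i}P_a\cap P_b\bigr)$, so for any measurable $Z$ the array $\dist_Z\bigl(\mathscr{P}_i\vee S_{n_i}^{-\ell_i}\mathscr{P}_i\bigr)$ is obtained from $\dist_Z(\mathscr{Q})$ by transposing the two coordinates, and since $p$ is symmetric under that transposition, $\bigl|\dist_Z(\mathscr{P}_i\vee S_{n_i}^{-\ell_i}\mathscr{P}_i)-p\bigr|=\bigl|\dist_Z(\mathscr{Q})-p\bigr|$. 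Hence the hypothesis says exactly that $|\dist_{Y_A}(\mathscr{Q})-p|<\epsilon$ for every atom $A$ of the relevant pure partition, where $Y_A:=S_{n_i}^{C_{M_{n_i}}}\bar A$.

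Next I would use Remark~7.7: $Y$ is an $S_{n_i}$-tower whose base $\bar B_{n_i}$ is partitioned by the sets $\bar A$, hence $Y=\bigsqcup_A Y_A$, and $\mu(Y)$ equals the measure of the $S_{n_i-1}$-tower $(C_{M_{n_i}},B_{n_i})$, which (stage $n_i$ falling under Case~1) is an $\tfrac{\epsilon_{n_i}}{K_{n_i}^{2}}$-tower, so $\mu(X\setminus Y)<\tfrac{\epsilon_{n_i}}{K_{n_i}^{2}}<\epsilon_{n_i}$. Because the $Y_A$ partition $Y$, for each $q\in I\times I$,
\begin{align*}
\dist_Y(\mathscr{Q})(q)=\frac{\mu(\mathscr{Q}_q\cap Y)}{\mu(Y)}=\sum_A\frac{\mu(Y_A)}{\mu(Y)}\,\dist_{Y_A}(\mathscr{Q})(q)
\end{align*}
is a convex combination, and running the estimate from the proof of Lemma~7.9 with the weights $\mu(Y_A)/\mu(Y)$ in place of $\tfrac12$ (or, if one wants to cite Lemma~7.9 verbatim, first trimming the $Y_A$ to a common measure up to arbitrarily small error using Lemma~7.8 and combining them in pairs) gives $|\dist_Y(\mathscr{Q})-p|<\epsilon$. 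Finally, for each $q$, setting $a=\mu(\mathscr{Q}_q\cap Y)\le\mu(Y)$ and $b=\mu(\mathscr{Q}_q\setminus Y)\le\mu(X\setminus Y)$, I have
\begin{align*}
\bigl|\dist(\mathscr{Q})(q)-\dist_Y(\mathscr{Q})(q)\bigr|=\Bigl|(a+b)-\frac{a}{\mu(Y)}\Bigr|=\Bigl|b-a\,\frac{1-\mu(Y)}{\mu(Y)}\Bigr|\le\mu(X\setminus Y)<\epsilon_{n_i},
\end{align*}
and the triangle inequality for $|\cdot|$ yields $|\dist(\mathscr{Q})-p|<\epsilon+\epsilon_{n_i}$, as claimed.

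The one step that is more than routine is the structural input used in the third paragraph: that the columns over the pure-partition atoms genuinely tile the permuted tower $Y$, and that $Y$ still covers all but $\epsilon_{n_i}$ of $X$. Both are exactly what Remark~7.7 together with the Case~1 tower estimate supply, so no new idea is needed — the remainder is the sign bookkeeping of the second paragraph and the elementary convexity estimate, i.e. the weighted, many-set form of Lemma~7.9.
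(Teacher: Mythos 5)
Your argument is correct and follows the same overall skeleton as the paper's proof — localize to the columns $S_{n_i}^{C_{M_{n_i}}}\bar A$ over the atoms of the pure partition, aggregate over the base to get the estimate on the whole permuted tower $Y$, then absorb $X\setminus Y$, whose measure is less than $\epsilon_{n_i}$ — but the aggregation step is carried out by a genuinely different mechanism. The paper combines atoms two at a time: it invokes Lemma 7.8 to trim a pair of columns to equal measure up to an arbitrary $\delta$, applies Lemma 7.9 to the trimmed pair, lets $\delta\to 0$, and then passes from pairs to the full union of columns. You instead write, for each cell $q$,
\begin{align*}
\dist_{Y}(\mathscr{Q})(q)=\sum_{A}\frac{\mu(Y_{A})}{\mu(Y)}\,\dist_{Y_{A}}(\mathscr{Q})(q)
\end{align*}
and use convexity directly: a weighted average of quantities each within $\epsilon$ of the same target stays within $\epsilon$. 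Your route is more elementary and tighter, since it needs neither the equal-measure hypothesis of Lemma 7.9 nor the trimming of Lemma 7.8, and it avoids the somewhat loose passage in the paper from pairwise unions to the union over all atoms. You also make explicit two points the paper leaves tacit: the transposition/measure-preservation observation reconciling $S_{n_i}^{-\ell}$ in the hypothesis with $S_{n_i}^{\ell_i}$ in the conclusion, and the per-cell computation showing that replacing $\dist_{Y}$ by the global distribution costs at most $\mu(X\setminus Y)<\epsilon_{n_i}$. What the paper's formulation buys is only economy of reference, reusing the two lemmas it has already proved; your weighted, many-set version of Lemma 7.9 subsumes both for this purpose.
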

\begin{proof}
Suppose we are given two atoms $A$ and $A'$ in $B_{n_{i}}$ and a $\delta>0$. Recall our definition of $\bar{A}$ and $\bar{A}'$ from Remark 7.7. By Lemma 7.8 and Lemma 7.9, we see that
\begin{align*}
|\dist_{S_{n_{i}}^{C_{M_{i}}}\bar{A}\cup\bar{A}'}\bigg( \mathscr{P}_{i}\vee S_{n_{i}}^{-\ell}\mathscr{P}_{i}\bigg)-\dist(\mathscr{P}_{i})\times \dist(\mathscr{P}_{i})|<\epsilon+2\delta.
\end{align*}
Since $\delta$ was given arbitrarily, 
\begin{align*}
|\dist_{S_{n_{i}}^{C_{M_{i}}}\bar{A}\cup\bar{A}'}\bigg( \mathscr{P}_{i}\vee S_{n_{i}}^{-\ell}\mathscr{P}_{i}\bigg)-\dist(\mathscr{P}_{i})\times \dist(\mathscr{P}_{i})|<\epsilon.
\end{align*} 
applying this inequality to every pair of atoms of $B_{n_{i}}$, we have that 
\begin{align*}
|\dist_{S_{n_{i}}^{C_{M_{i}}}\bar{B}_{n_{i}}}\bigg( \mathscr{P}_{i}\vee S_{n_{i}}^{-\ell}\mathscr{P}_{i}\bigg)-\dist(\mathscr{P}_{i})\times \dist(\mathscr{P}_{i})|<\epsilon
\end{align*}
Finally, since $\mu(S_{n_{i}-1}^{C_{M_{i}}}B_{n_{i}})>1-\epsilon_{n_{i}}$,
\begin{align*}
|\dist(\mathscr{P}_{i}\vee S_{n_{i}}^{\ell_{i}}\mathscr{P}_{i})-\dist(\mathscr{P}_{i})\times \dist(\mathscr{P}_{i})|<\epsilon+\epsilon_{n_{i}}.
\end{align*}
\end{proof}
We will also need the following lemma.
\begin{lemma} Given the sequence of actions $\{S_{n}\}_{n\in\N}$ defined by $S_{n+1}=S^{\beta^{-1}_{n+1}}_{n}$, the subsequence $\{K_{n_{j}}\}_{j\in\N}$ of  $\{K_{n}\}_{n\in\N}$, and an $\ell\in\Z$, then, for all $j\in\N$ and $m\in\{n_{j}+1,\dots,n_{j+1}\}$,
\begin{align*}
\mu(\{x\in X\mid S^{\ell}_{m}(x)=S_{n_{j}}^{\ell}(x)\})\geq 1-\sum_{i=n_{j}+1}^{m}\bigg(\frac{\epsilon_{i}}{K_{n_{j+1}}^{2}}+\frac{K_{i}}{L_{i}}+\frac{|\ell|}{L_{i}}\bigg).
\end{align*}
\end{lemma}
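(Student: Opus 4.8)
The plan is to telescope through the chain $S_{n_j},S_{n_j+1},\dots,S_{m}$. Since
\[
\{x\in X\mid S_{m}^{\ell}(x)=S_{n_j}^{\ell}(x)\}\supset\bigcap_{i=n_j+1}^{m}\{x\in X\mid S_{i}^{\ell}(x)=S_{i-1}^{\ell}(x)\},
\]
it suffices to show, for each $i\in\{n_j+1,\dots,m\}$, that the bad set $U_{i}:=\{x\in X\mid S_{i}^{\ell}(x)\neq S_{i-1}^{\ell}(x)\}$ satisfies $\mu(U_{i})<\frac{\epsilon_{i}}{K_{n_{j+1}}^{2}}+\frac{K_{i}}{L_{i}}+\frac{|\ell|}{L_{i}}$; the lemma then follows from $\mu\big(\bigcap_{i=n_j+1}^{m}(X\setminus U_{i})\big)\geq 1-\sum_{i=n_j+1}^{m}\mu(U_{i})$.

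For the single-step estimate, recall that $S_{i}=S_{i-1}^{\beta_{i}^{-1}}$, where $\beta_{i}$ is the $(L_{i}-K_{i},K_{i},L_{i},\epsilon_{i})$-blocked cocycle whose blocks are exactly the sub-columns $S_{i-1}^{C_{L_{i}}+m}(b)$, $b\in B_{i}$, $m\in L_{i}C_{M_{i}/L_{i}}$, tiling the Rokhlin tower $S_{i-1}^{C_{M_{i}}}B_{i}$, and on the rigid block $S_{i-1}^{C_{L_{i}-K_{i}}+m}(b)$ of each sub-column the associated block permutation acts by a translation. Writing a point of the tower as $x=S_{i-1}^{t}(b)$ and letting $\Pi=\sigma_{i}(b)$ be its column permutation, one unwinds the definition of $S_{i}$ to obtain $S_{i}^{\ell}(x)=S_{i-1}^{\Pi^{-1}(\Pi(t)+\ell)}(b)$; hence if $t$ and $t+\ell$ both lie in the rigid part $C_{L_{i}-K_{i}}+m$ of the \emph{same} sub-column, then $\Pi(t+\ell)=\Pi(t)+\ell$, and therefore $S_{i}^{\ell}(x)=S_{i-1}^{t+\ell}(b)=S_{i-1}^{\ell}(x)$. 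Consequently $U_{i}$ is contained in the union of three sets: (a) $X\setminus S_{i-1}^{C_{M_{i}}}B_{i}$; (b) the filler sets of $\beta_{i}$ lying inside that tower; and (c) the set of $x=S_{i-1}^{t}(b)$ in a rigid block for which $t+\ell$ escapes the rigid part of $x$'s own sub-column.

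It remains to bound these three sets. Set (a) has measure less than $\frac{\epsilon_{i}}{K_{n_{j+1}}^{2}}$: in Case 1 one has $i=n_{j+1}$ (so $K_{i}=K_{n_{j+1}}$) and the tower is chosen to be an $\frac{\epsilon_{i}}{K_{i}^{2}}$-tower, while in Case 2 ($i<n_{j+1}$) it is chosen directly to be an $\frac{\epsilon_{i}}{K_{n_{j+1}}^{2}}$-tower of $S_{i-1}$. Set (b) has measure at most $\frac{K_{i}}{L_{i}}$, since each length-$L_{i}$ sub-column contributes exactly $K_{i}$ filler levels, so the filler occupies a fraction $K_{i}/L_{i}$ of the tower. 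Set (c) has measure at most $\frac{|\ell|}{L_{i}}$, since inside each sub-column only the $|\ell|$ rigid levels within distance $|\ell|$ of the boundary on the side toward which $\ell$ shifts can be bad. Adding these three bounds gives the single-step estimate, and summing over $i$ from $n_j+1$ to $m$ proves the lemma. The point requiring the most care is item (c): pinning down exactly which rigid-block points are ``boundary'' points and checking that the count is uniform over the (randomly selected) block permutations, together with confirming that the index governing the tower-error term throughout the range $n_j<i\leq m$ is the single ``next'' index $n_{j+1}$, so that the $\frac{\epsilon_{i}}{K_{n_{j+1}}^{2}}$ bound is legitimate for every such $i$.
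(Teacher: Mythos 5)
Your proposal is correct and follows essentially the same route as the paper: the paper composes the cocycles into $\beta=\beta_{m}\circ\dots\circ\beta_{n_{j}+1}$ and observes $\beta(x,\ell)=\ell$ unless, at some stage $i$, the point misses the tower, lies in a filler set, or lies in a rigid block whose $\ell$-shift escapes it, with the same three bounds $\frac{\epsilon_{i}}{K_{n_{j+1}}^{2}}$, $\frac{K_{i}}{L_{i}}$, $\frac{|\ell|}{L_{i}}$ and a union bound over $i$. Your telescoping of the single-step equalities $S_{i}^{\ell}=S_{i-1}^{\ell}$ is just a repackaging of that argument, and your handling of the Case 1/Case 2 tower-error index $K_{n_{j+1}}$ matches the construction.
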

\begin{proof}
Let $j\in\N$ and $m\in\{n_{j}+1,\dots,n_{j+1}\}$ be given. Setting $\beta=\beta_{m}\circ\beta_{m-1}\circ\dots\circ \beta_{n_{j}+1}$, we notice that $S_{m}=S_{n_{j}}^{\beta^{-1}}$. So, if $x\in X$ such that $S^{\ell}_{m}(x)=S^{\ell}_{n_{j}}(x)$, we must have that 
\begin{align*}
S^{\ell}_{n_{j}}(x)=S^{\ell}_{m}(x)=S_{n_{j}}^{\beta^{-1}(x,\ell)}(x)
\end{align*}
So, we must have that $\beta(x,\ell)=\ell$. Note that this equality may not hold for an $x\in X$ if there is an $i\in\N$ such that $n_{j}+1\leq i\leq m$ and at least one of the following properties holds:
\begin{enumerate}
\item $x\in X\setminus S_{i-1}^{M_{i}}B_{i}$,
\item $x$ is in the filler set of a block, or
\item $x$ is in a rigid block, but $S_{i-1}^{\ell}(x)$ is not. 
\end{enumerate}
We have that, for each $i$ between $n_{j}+1$ and $m$ inclusive, the set of points which satisfy each condition is less than $\frac{\epsilon_{i}}{K_{n_{j+1}}^{2}}$, $\frac{K_{i}}{L_{i}}$, and $\frac{|\ell|}{L_{i}}$ respectively. Thus, removing these sets for every $i$ between $n_{j}+1$ and $m$ yields
\begin{align*}
\mu(\{x\in X\mid S^{\ell}_{m}(x)=S_{n_{j}}^{\ell}(x)\})\geq 1-\sum_{i=n_{j}+1}^{m}\bigg(\frac{\epsilon_{i}}{K_{n_{j+1}}^{2}}+\frac{K_{i}}{L_{i}}+\frac{|\ell|}{L_{i}}\bigg)
\end{align*}
\end{proof}
\begin{lemma} For every $i\in\N$, if $A$ is an atom of the pure partition of $B_{n_{i}}$ respect to $\mathscr{P}_{i}\vee \{O_{i-1},X\setminus O_{i-1}\}$, then for every $b\in A$ all but a proportion of at most $4\epsilon_{n_{i-1}}$ of the $m\in M_{n_{i}}$ have that the $(S_{n_{i}-1}, \mathscr{P}_{i}, -C_{K_{n_{i}}})$-name of $S^{m}_{n_{i}-1}(b)$ agrees with its $(S_{n_{i-1}}, \mathscr{P}_{i}, -C_{K_{n_{i}}})$-name.
\end{lemma}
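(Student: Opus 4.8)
The plan is to exploit that the two actions $S_{n_i-1}$ and $S_{n_{i-1}}$ share the same single generator exactly on the set $O_{i-1}=\{x\mid S_{n_{i-1}}(x)=S_{n_i-1}(x)\}$, and to push this through the tower $S_{n_i-1}^{C_{M_{n_i}}}B_{n_i}$ level by level. The elementary fact is: for any $z\in X$ one has $S_{n_i-1}(z)=S_{n_{i-1}}(z)$ iff $z\in O_{i-1}$, and likewise $S_{n_i-1}^{-1}(z)=S_{n_{i-1}}^{-1}(z)$ iff $S_{n_i-1}^{-1}(z)\in O_{i-1}$. Iterating this backwards, if $y=S_{n_i-1}^m(b)$ and all of the $K_{n_i}$ levels $S_{n_i-1}^{m-1}(b),\dots,S_{n_i-1}^{m-K_{n_i}}(b)$ lie in $O_{i-1}$, then $S_{n_i-1}^k(y)=S_{n_{i-1}}^k(y)$ for every $k\in -C_{K_{n_i}}$; in particular the $(S_{n_i-1},\mathscr{P}_i,-C_{K_{n_i}})$-name and the $(S_{n_{i-1}},\mathscr{P}_i,-C_{K_{n_i}})$-name of $y$ are names of literally the same $K_{n_i}$ points and therefore coincide.

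Taking the contrapositive, if the two names at level $m$ disagree then some $m'\in\{m-K_{n_i},\dots,m-1\}$ is a \emph{bad level}, meaning $S_{n_i-1}^{m'}(b)\notin O_{i-1}$; each bad level is responsible for at most $K_{n_i}$ values of $m$, and in addition at most the $K_{n_i}$ bottom levels $m\le K_{n_i}$ (whose backward window leaves the tower) must be discarded. Because $A$ is an atom of the pure partition of $B_{n_i}$ with respect to $\mathscr{P}_i\vee\{O_{i-1},X\setminus O_{i-1}\}$, each level $S_{n_i-1}^{m'}(A)$ lies wholly inside or wholly outside $O_{i-1}$, so the set of bad levels is the same for all $b\in A$; and since $n_i$ is one of the distinguished stages, $B_{n_i}$ was built in Case 1, where condition (3) guarantees that the proportion of $m'\in C_{M_{n_i}}$ with $S_{n_i-1}^{m'}(b)\in O_{i-1}$ differs from $\mu(O_{i-1})$ by at most $\epsilon_{n_i}/K_{n_i}$. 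Hence the number of bad levels is at most $\bigl(1-\mu(O_{i-1})+\epsilon_{n_i}/K_{n_i}\bigr)M_{n_i}$, and the total proportion of bad $m$ is at most $K_{n_i}(1-\mu(O_{i-1}))+2\epsilon_{n_i}$ (using $K_{n_i}/M_{n_i}<\epsilon_{n_i}$, which holds since $M_{n_i}>L_{n_i}/\epsilon_{n_i}>K_{n_i}/\epsilon_{n_i}$).

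It remains to bound $1-\mu(O_{i-1})$. Lemma 7.11 applied with $\ell=1$, $j=i-1$, $m=n_i-1$ gives $1-\mu(O_{i-1})\le\sum_{k=n_{i-1}+1}^{n_i-1}\bigl(\epsilon_k/K_{n_i}^2+K_k/L_k+1/L_k\bigr)$ (and $O_{i-1}=X$ trivially when $n_i=n_{i-1}+1$, so one may assume the sum is nonempty). Each such stage $k$ is not distinguished, so by the Case 2 choice of parameters $K_{n_i}^3/L_k<\epsilon_k$, whence $K_k/L_k\le K_{n_i}/L_k<\epsilon_k/K_{n_i}^2$ and $1/L_k<\epsilon_k/K_{n_i}^2$; thus $K_{n_i}(1-\mu(O_{i-1}))<(3/K_{n_i})\sum_{k>n_{i-1}}\epsilon_k<3\epsilon_{n_{i-1}}$, using that $\{\epsilon_n\}$ can be taken to decay fast enough that $\sum_{k>n}\epsilon_k\le\epsilon_n$ (compatible with $\epsilon_n<2^{-n}$, e.g. $\epsilon_n=4^{-n}$). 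Combining with the $2\epsilon_{n_i}<\epsilon_{n_{i-1}}$ term from the previous paragraph, the proportion of bad $m$ is less than $4\epsilon_{n_{i-1}}$, as claimed.

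The main obstacle is the backward-iteration bookkeeping in the first two paragraphs: once the $S_{n_i-1}$- and $S_{n_{i-1}}$-orbits of $y$ separate at some level, the later $S_{n_{i-1}}$-comparison points are no longer powers of $S_{n_i-1}$ applied to $b$, so one must argue via the contrapositive and pin down exactly which $K_{n_i}$ levels below $m$ need to belong to $O_{i-1}$ (an off-by-one to get right). Everything else — the counting of bad levels via purity and condition (3), and the parameter estimate via Lemma 7.11 — is routine given the choices in Section 7, the only soft point being that hitting the constant $4$ exactly uses the (freely available) rapid decay of $\{\epsilon_n\}$.
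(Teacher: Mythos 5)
Your proof is correct and takes essentially the same route as the paper's: bound the proportion of tower levels over $b$ lying outside $O_{i-1}$ via Lemma 7.11 together with condition (3) of the Case 1 tower choice, multiply by $K_{n_i}$ since a name disagreement at $m$ forces a level of $X\setminus O_{i-1}$ in the window $\{m-K_{n_i},\dots,m-1\}$ (or $m\le K_{n_i}$), and close with the Case 2 parameter inequalities. If anything you carry out the backward-iteration and bottom-of-tower bookkeeping more explicitly than the paper does; your extra assumption $\sum_{k>n}\epsilon_k\le\epsilon_n$ is freely imposable but not needed, since the relevant sum has fewer than $K_{n_i}$ terms and $\{\epsilon_n\}$ is strictly decreasing.
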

\begin{proof}
Let an atom $A$ of $B_{n_{i}}$ and a $b\in A$ be given. By Lemma 7.11, 
\begin{align*}
\mu(\{x\in X\mid S_{n_{i}-1}(x)=S_{n_{i-1}}(x)\})\geq 1-\sum_{j=n_{i-1}+1}^{n_{i}-1}\bigg(\frac{\epsilon_{j}}{K_{n_{i}}^{2}}+\frac{K_{j}}{L_{j}}+\frac{1}{L_{j}}\bigg).
\end{align*}
Recall that we chose our tower $S_{n_{i}-1}^{C_{M_{n_{i}}}}B_{n_{i}}$ so that for every $b\in A$ we have that the distribution of the $(S_{n_{i}-1},\{O_{i-1},X\setminus O_{i-1}\}, C_{M_{n_{i}}})$-name of $b$ is within $\frac{\epsilon_{n_{i}}}{K_{n_{i}}}$ of the distribution of $\{O_{i-1},X\setminus O_{i-1}\}$. So, for all $b\in A$ we have, for all but at most a proportion $\sum_{j=n_{i-1}+1}^{n_{i}-1}\bigg(\frac{\epsilon_{j}}{K_{n_{i}}^{2}}+\frac{K_{j}}{L_{j}}+\frac{1}{L_{j}}\bigg)+\frac{\epsilon_{n_{i}}}{K_{n_{i}}}$ of the $m\in M_{n_{i}}$, that $S^{m}_{n_{i}-1}(b)=S^{m}_{n_{i-1}}(b)$. Thus, for all $b\in A$, we have that all but at most a fraction $4\epsilon_{n_{i-1}}$ of the $m\in M_{n_{i}}$ have that the $(S_{n_{i}-1}, \mathscr{P}_{i}, -C_{K_{n_{i}}})$-name of $S^{m}_{n_{i}-1}(b)$ agrees with its $(S_{n_{i-1}}, \mathscr{P}_{i}, -C_{K_{n_{i}}})$-name since
\begin{align*}
K_{n_{i}}\bigg(\sum_{j=n_{i-1}+1}^{n_{i}-1}\bigg(\frac{\epsilon_{j}}{K_{n_{i}}^{2}}+\frac{K_{j}}{L_{j}}+\frac{1}{L_{j}}\bigg)+\frac{\epsilon_{n_{i}}}{K_{n_{i}}}\bigg)&<\epsilon_{n_{i-1}}+\frac{K_{n_{i}}^{3}}{L_{n_{i-1}+1}}+\frac{K_{n_{i}}^{2}}{L_{n_{i-1}+1}}+\epsilon_{n_{i}}\\&<4\epsilon_{n_{i-1}}.
\end{align*}
\end{proof}
\begin{remark} For every $i\in\N$, set $\ell_{i}=L_{n_{i}}$. We will verify our approximate mixing condition for each $S_{n_{i}}$ relative to $\ell_{i}$.
\end{remark}
\begin{definition} Let $i\in\N$ and $A$ be an atom of the pure partition of $B_{n_{i}}$ respect to $\mathscr{P}_{i}\vee \{O_{i-1},X\setminus O_{i-1}\}$. We say that an element $m\in C_{M_{n_{i}}}$ is a \textbf{good position} for $A$ if for all $b\in A$ we have that both $S_{n_{i}-1}^{m-C_{K_{n_{i}}}}(b)$ and $S_{n_{i}-1}^{m+\ell_{i}-C_{K_{n_{i}}}}(b)$ are in rigid blocks of $\beta_{n_{i}}$, and 
\begin{align*}
|\dist(\mathscr{P}_{i})-\dist_{S_{n_{i}-1}^{m-C_{K_{n_{i}}}}(A)}(\mathscr{P}_{i})|<\epsilon_{n_{i}}
\end{align*}
and
\begin{align*}
|\dist(\mathscr{P}_{i})-\dist_{S_{n_{i}-1}^{m+\ell_{i}-C_{K_{n_{i}}}}(A)}(\mathscr{P}_{i})|<\epsilon_{n_{i}}
\end{align*}
both hold.
\end{definition}
\begin{lemma}
Let $i\in\N$ and $A$ be an atom of the pure partition of $B_{n_{i}}$ respect to $\mathscr{P}_{i}\vee \{O_{i-1},X\setminus O_{i-1}\}$. If $m\in C_{M_{n_{i}}}$ is a good position for $A$, then 
\begin{align*}
|\dist_{S_{n_{i}}^{m}\bar{A}}\bigg( \mathscr{P}_{i}\vee S_{n_{i}}^{-\ell_{i}}\mathscr{P}_{i}\bigg)-\dist(\mathscr{P}_{i})\times \dist(\mathscr{P}_{i})|<2\epsilon_{n_{i}}.
\end{align*}
\end{lemma}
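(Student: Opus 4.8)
The plan is to unwind the definitions of $S_{n_i}$ and of the reorganized base $\bar A$, isolate the single random block translation on which each of the two positions $m$ and $m+\ell_i$ depends, and then exploit the fact that translations of distinct blocks are chosen independently.

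\textbf{Reduction to a count.} Since $S_{n_i}$ is measure preserving, for atoms $P,Q\in\mathscr{P}_i$,
\begin{align*}
\dist_{S_{n_i}^m\bar A}(\mathscr{P}_i\vee S_{n_i}^{-\ell_i}\mathscr{P}_i)(P\cap S_{n_i}^{-\ell_i}Q)=\frac{\mu(\{\bar a\in\bar A:\ S_{n_i}^m\bar a\in P,\ S_{n_i}^{m+\ell_i}\bar a\in Q\})}{\mu(\bar A)}.
\end{align*}
Write $j(m)=\lfloor m/L_{n_i}\rfloor$ for the index of the block of $(C_{M_{n_i}},B_{n_i})$ containing the reorganized level $m$, and for $a\in B_{n_i}$ and an index $r$ let $k_r(a)\in C_{K_{n_i}}$ be the translation that $\sigma_{n_i}$ assigns to the $r$-th block over $a$. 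Unwinding $S_{n_i}=S_{n_i-1}^{\beta_{n_i}^{-1}}$, the definition of $\beta_{n_i}$ from the permutations $\pi_k$, and the definition of $\bar A$, and using that $m$ is a good position so that the reorganized levels near $m$ and $m+\ell_i$ land inside rigid sub-blocks, one checks that the map $\Phi$ sending $a\in A$ to the reorganized base point lying on the $S_{n_i-1}$-column of $a$ is a measure preserving bijection of $A$ onto $\bar A$ (so $\mu(\bar A)=\mu(A)$), and that for $a\in A$
\begin{align*}
S_{n_i}^m\Phi(a)=S_{n_i-1}^{\,m-k_{j(m)}(a)}(a)\qquad\text{and}\qquad S_{n_i}^{m+\ell_i}\Phi(a)=S_{n_i-1}^{\,m+\ell_i-k_{j(m)+1}(a)}(a),
\end{align*}
the second identity relying on $\ell_i=L_{n_i}$, which puts $m$ and $m+\ell_i$ in the consecutive blocks $j(m)$ and $j(m)+1$ with the same offset inside the block.

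\textbf{Independence and the product.} Since $\Sigma_{n_i}$ consists of all choices of one translation per block, the hypothesis that $\sigma_{n_i}|_A$ is uniformly distributed on $\Sigma_{n_i}$ means that, as functions on $A$ with normalized measure, the translations $(k_r)_r$ are i.i.d.\ uniform on $C_{K_{n_i}}$; in particular $k_{j(m)}$ and $k_{j(m)+1}$ are independent and uniform. Because $A$ is an atom of the pure partition with respect to $\mathscr{P}_i\vee\{O_{i-1},X\setminus O_{i-1}\}$, for each level $t$ the set $S_{n_i-1}^tA$ lies in a single atom $\chi_t\in\mathscr{P}_i$. Transporting the set above through $\Phi$, partitioning $A$ according to the values of $k_{j(m)}$ and $k_{j(m)+1}$, and using independence,
\begin{align*}
\frac{\mu(\{\bar a\in\bar A:\ S_{n_i}^m\bar a\in P,\ S_{n_i}^{m+\ell_i}\bar a\in Q\})}{\mu(\bar A)}=\frac{|\{k\in C_{K_{n_i}}:\ \chi_{m-k}=P\}|}{K_{n_i}}\cdot\frac{|\{k\in C_{K_{n_i}}:\ \chi_{m+\ell_i-k}=Q\}|}{K_{n_i}}.
\end{align*}
Because $\mu(S_{n_i-1}^tA)$ does not depend on $t$, the first factor equals $\dist_{S_{n_i-1}^{m-C_{K_{n_i}}}(A)}(\mathscr{P}_i)(P)$ and the second equals $\dist_{S_{n_i-1}^{m+\ell_i-C_{K_{n_i}}}(A)}(\mathscr{P}_i)(Q)$. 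By the two inequalities defining a good position, these are within $\epsilon_{n_i}$ of $\dist(\mathscr{P}_i)(P)$ and $\dist(\mathscr{P}_i)(Q)$; writing the left-hand side as $(\dist(\mathscr{P}_i)(P)+e_1)(\dist(\mathscr{P}_i)(Q)+e_2)$ with $|e_1|,|e_2|<\epsilon_{n_i}$ and noting it lies in $[0,1]$, its difference from $\dist(\mathscr{P}_i)(P)\dist(\mathscr{P}_i)(Q)$ is $<2\epsilon_{n_i}$ in absolute value. Taking the maximum over atoms $P\cap S_{n_i}^{-\ell_i}Q$ gives the claim.

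\textbf{Main obstacle.} The delicate part is the second paragraph: one must chase the definitions of $\beta_{n_i}$, $S_{n_i}$ and $\bar A$ to verify that, on a good position, the reorganized level-$m$ point over $A$ depends on the block translations only through the single value $k_{j(m)}(a)$, and does so by a plain translation by that value, so that the corresponding $S_{n_i-1}$-levels sweep out precisely the window $m-C_{K_{n_i}}$ occurring in the good-position hypothesis (and similarly at $m+\ell_i$), with the edge effects at chunk boundaries being exactly what the good-position conditions rule out. Once this identification is in place, the probabilistic core — independence of the translations of the two distinct blocks $j(m)$ and $j(m)+1$, which is precisely where $\ell_i=L_{n_i}$ is used — produces the product structure at once, and the remainder is the elementary estimate above.
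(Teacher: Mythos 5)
Your proposal is correct and follows essentially the same route as the paper: both arguments place $m$ and $m+\ell_i$ in distinct (consecutive) blocks using $\ell_i=L_{n_i}$, use the uniform distribution of $\sigma_{n_i}|_A$ on $\Sigma_{n_i}$ to see that the two relevant block translations are independent and uniform (the paper phrases this as the partition of $\bar{A}$ into the sets $Q_{k,k'}$ of equal measure $\mu(\bar{A})/K_{n_i}^{2}$), factor the joint frequency into the two column-level counting fractions, and invoke the two good-position inequalities to conclude the $2\epsilon_{n_i}$ bound. Your explicit bijection $\Phi:A\to\bar{A}$ and the final $(p+e_1)(q+e_2)$ estimate are just slightly more spelled-out versions of steps the paper leaves implicit.
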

\begin{proof}
Let $i\in\N$ and an atom $A$ in $B_{n_{i}}$ be given. Note that we have that $S_{n_{i}-1}^{m-C_{K_{n_{i}}}}A$ and $S_{n_{i}-1}^{m+\ell_{i}-C_{K_{n_{i}}}}A$ are in different rigid blocks since we chose $\ell_{i}=L_{n_{i}}$. First, by the facts that we chose our permutations of $C_{M_{i}}$ to define the block permutations of $\beta_{n_{i}}$ to be uniformly distributed on $A$ and each permutation of a point in a rigid block is just translation by some $k\in K_{n_{i}}$, we can see, setting for $k,k'\in K_{n_{i}}$ \begin{align*}
Q_{k,k'}=\bigg\{b\in\bar{A}\mid \beta_{n_{i}}^{-1}(b,m)=m-k\text{ and } \beta_{n_{i}}^{-1}(b,m+\ell )=m+\ell -k'\bigg\},
\end{align*}
that $\{Q_{k,k'}\}_{k,k'\in K_{n_{i}}}$ is a partition of $\bar{A}$ into sets of uniform measure $\frac{\mu(\bar{A})}{K_{n_{i}}^{2}}$. Now, fix $P,P'\in\mathscr{P}_{i}$. We have that $\mu(S_{n_{i}}^{-m}P\cap S_{n_{i}}^{-(m+\ell_{i})}P'\cap Q_{k,k'})=\mu(Q_{k,k'})$ precisely when $S_{n_{i}-1}^{m-k}A\subset P$ and $S_{n_{i}-1}^{m+\ell_{i}-k'}A\subset P'$. Otherwise, $\mu(S_{n_{i}}^{-m}P\cap S_{n_{i}}^{-(m+\ell_{i})}P'\cap Q_{k,k'})=0$. Thus,
\begin{align*}
\mu\bigg(S_{n_{i}}^{-m}P\cap S_{n_{i}}^{-(m+\ell_{i})}P'\cap \bar{A}\bigg)&=\sum_{k,k'\in K_{n_{i}}}\mu\bigg(S_{n_{i}}^{-m}P\cap S_{n_{i}}^{-(m+\ell_{i})}P'\cap Q_{k,k'}\bigg)\\
&= \frac{|\{k\in K_{n_{i}}\mid S_{n_{i}-1}^{m-k}A\subset P\}|}{K_{n_{i}}}\frac{|\{k'\in K_{n_{i}}\mid S_{n_{i}-1}^{m+\ell_{i}-k'}A\subset P'\}|}{K_{n_{i}}}\mu(\bar{A}).
\end{align*}
Since $m$ is a good position for $A$, we recall that 
\begin{align*}
|\dist(\mathscr{P}_{i})-\dist_{S_{n_{i}-1}^{m-C_{K_{n_{i}}}}(A)}(\mathscr{P}_{i})|<\epsilon_{i}
\end{align*}
and
\begin{align*}
|\dist(\mathscr{P}_{i})-\dist_{S_{n_{i}-1}^{m+\ell_{i}-C_{K_{n_{i}}}}(A)}(\mathscr{P}_{i})|<\epsilon_{i}
\end{align*}
both hold. Hence, we get that 
\begin{align*}
\bigg|\frac{\mu\bigg(S_{n_{i}}^{-m}P\cap S_{n_{i}}^{-(m+\ell_{i})}P'\cap \bar{A}\bigg)}{\mu(\bar{A})}-\mu(P)\mu(P')\bigg|<2\epsilon_{n_{i}}.
\end{align*}
Since our choices of $P$ and $P'$ were arbitrary, we have 
\begin{align*}
|\dist_{S_{n_{i}}^{m}\bar{A}}\bigg( \mathscr{P}_{i}\vee S_{n_{i}}^{-\ell_{i}}\mathscr{P}_{i}\bigg)-\dist(\mathscr{P}_{i})\times \dist(\mathscr{P}_{i})|<2\epsilon_{n_{i}}.
\end{align*}
\end{proof}
\begin{lemma}
Let $i\in\N$ and $A$ be an atom of the pure partition of $B_{n_{i}}$ with respect to $\mathscr{P}_{i}\vee \{O_{i-1},X\setminus O_{i-1}\}$. Then, all but a fraction at most $8\epsilon_{n_{i-1}}+4\epsilon_{n_{i}}$ of the $m\in C_{M_{n_{i}}}$ are good positions for $A$.
\end{lemma}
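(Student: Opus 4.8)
The plan is to bound the measure of the set of $m\in C_{M_{n_i}}$ that fail to be good positions for $A$ by splitting the failures into three types. The starting observation is that, because $A$ is an atom of the pure partition of $B_{n_i}$ with respect to $\mathscr{P}_i\vee\{O_{i-1},X\setminus O_{i-1}\}$, every column level $S_{n_i-1}^{j}(A)$ (for $j\in C_{M_{n_i}}$) is contained in a single atom of $\mathscr{P}_i$; hence, for each $m$, the distribution $\dist_{S_{n_i-1}^{m-C_{K_{n_i}}}(A)}(\mathscr{P}_i)$ coincides exactly with the empirical $\mathscr{P}_i$-distribution of the $(S_{n_i-1},\mathscr{P}_i,-C_{K_{n_i}})$-name of $S_{n_i-1}^{m}(b)$ for any $b\in A$ (and similarly with $m$ replaced by $m+\ell_i$). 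In particular all four requirements in Definition 7.15 are independent of the choice of $b\in A$, so it is enough to work with one fixed $b\in A$ and to show that the set of $m$ for which some requirement fails has measure at most $8\epsilon_{n_{i-1}}+4\epsilon_{n_i}$.

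I would then enumerate the possible failures:
\begin{enumerate}
\item[(a)] One of the windows $m-C_{K_{n_i}}$ or $m+\ell_i-C_{K_{n_i}}$ is not contained in a rigid block of $\beta_{n_i}$. Since $\ell_i=L_{n_i}$ is exactly the width of one block of $\beta_{n_i}$, translation by $\ell_i$ carries rigid blocks onto rigid blocks; so a length-$K_{n_i}$ window fails to lie in a single rigid block only when $m$ is within $K_{n_i}$ of a block boundary or within $L_{n_i}$ of the top of the tower $(C_{M_{n_i}},B_{n_i})$. Using $K_{n_i}^{3}/L_{n_i}<\epsilon_{n_i}$ and $L_{n_i}/M_{n_i}<\epsilon_{n_i}$, the fraction of such $m$ is at most $2\epsilon_{n_i}$.
\item[(b)] $S_{n_i-1}^{m}(b)\notin E_i$ or $S_{n_i-1}^{m+\ell_i}(b)\notin E_i$, where $E_i$ is the set supplied by the ergodic theorem when the base $B_{n_i}$ was chosen (property (2) in Case 1). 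By that property the fraction of $k\in C_{M_{n_i}}$ with $S_{n_i-1}^{k}(b)\notin E_i$ is less than $\epsilon_{n_i}$, so this type of failure costs at most $2\epsilon_{n_i}$.
\item[(c)] The $(S_{n_i-1},\mathscr{P}_i,-C_{K_{n_i}})$-name of $S_{n_i-1}^{m}(b)$ disagrees with its $(S_{n_{i-1}},\mathscr{P}_i,-C_{K_{n_i}})$-name, or the same happens at $S_{n_i-1}^{m+\ell_i}(b)$. By Lemma 7.12 each of these two events holds for at most a fraction $4\epsilon_{n_{i-1}}$ of the $m$ (the second after a shift by $\ell_i$, which preserves counting density on $C_{M_{n_i}}$), contributing $8\epsilon_{n_{i-1}}$.
\end{enumerate}

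Finally I would verify that any $m$ avoiding (a), (b) and (c) is a good position: avoiding (a) gives requirements (1) and (2); avoiding (b) places $S_{n_i-1}^{m}(b)$ and $S_{n_i-1}^{m+\ell_i}(b)$ in $E_i$, so the empirical $\mathscr{P}_i$-distributions of their $(S_{n_{i-1}},\mathscr{P}_i,-C_{K_{n_i}})$-names are within the selection tolerance of $\dist(\mathscr{P}_i)$; and avoiding (c) identifies those with the $(S_{n_i-1},\mathscr{P}_i,-C_{K_{n_i}})$-names, which by the opening observation equal $\dist_{S_{n_i-1}^{m-C_{K_{n_i}}}(A)}(\mathscr{P}_i)$ and $\dist_{S_{n_i-1}^{m+\ell_i-C_{K_{n_i}}}(A)}(\mathscr{P}_i)$ — giving requirements (3) and (4). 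Adding the three estimates and collecting the $\epsilon_{n_i}$-order terms yields the bound $8\epsilon_{n_{i-1}}+4\epsilon_{n_i}$.

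The point where care is needed is (c): the set $E_i$ controls names taken along $S_{n_{i-1}}$-orbits, whereas good positions require control of names along $S_{n_i-1}$-orbits. Lemma 7.12 — which in turn rests on Lemma 7.11 and on the control condition (3) imposed on $B_{n_i}$ via the partition $\{O_{i-1},X\setminus O_{i-1}\}$ — is precisely the tool that transfers between the two actions, at the price of the $\epsilon_{n_{i-1}}$-order error. Everything else is the routine bookkeeping of block-boundary and tower-edge contributions in (a), where the parameter inequalities fixed in the construction do all the work.
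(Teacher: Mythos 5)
Your proposal is correct and follows essentially the same route as the paper: the failure set is split into the rigid-block/tower-edge contribution (bounded by $\frac{K_{n_i}}{L_{n_i}}+\frac{L_{n_i}}{M_{n_i}}<2\epsilon_{n_i}$ using $\ell_i=L_{n_i}$), the $E_i$-membership estimate from the choice of $B_{n_i}$, and the transfer of $(S_{n_{i-1}},\mathscr{P}_i,-C_{K_{n_i}})$-names to $(S_{n_i-1},\mathscr{P}_i,-C_{K_{n_i}})$-names via Lemma 7.12, applied at both $m$ and $m+\ell_i$. Your bookkeeping merely separates explicitly the $E_i$-membership and name-transfer failures that the paper bundles into the single estimate $4\epsilon_{n_{i-1}}+\epsilon_{n_i}$ per position, and the totals agree.
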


\begin{proof} 
Let $i\in\N$ and the atom $A$ be given. First, recall that we created a set $E_{i}$ such that $\mu(X\setminus E_{i})<\epsilon_{n_{i}}$ and that $x\in E_{i}$ implies that the $(S_{n_{i-1}},\mathscr{P}_{i}, -C_{K_{n_{i}}})$-name of $x$ has distribution within $\epsilon_{n_{i}}$ of the distribution of $\mathscr{P}_{i}$. Also, we chose our base $B_{n_{i}}$ so that for all $b\in B_{n_{i}}$ there is a set $F\subset C_{M_{i}}$ such that $|F|>(1-\epsilon_{n_{i}})|C_{M_{n_{i}}}|$ and for every $k\in F$ we have that $S_{n_{i}-1}^{k}(b)\in E_{i}$. So, since every element of $b\in A$ has the same $(S_{n_{i-1}},\mathscr{P}_{i}, -C_{M_{n_{i}}})$-name, all but at most a fraction $\epsilon_{n_{i}}$ of the $m\in C_{M_{i}}$ have that
\begin{align*}
|\dist(\mathscr{P}_{i})-\dist_{S_{n_{i-1}}^{m-C_{K_{n_{i}}}}(A)}(\mathscr{P}_{i})|<\epsilon_{n_{i}}.
\end{align*}
Lemma 7.12 implies that all but a proportion of at most $4\epsilon_{n_{i-1}}+\epsilon_{n_{i}}$ of the $m\in C_{M_{i}}$ have that
\begin{align*}
|\dist(\mathscr{P}_{i})-\dist_{S_{n_{i}-1}^{m-C_{K_{n_{i}}}}(A)}(\mathscr{P}_{i})|<\epsilon_{n_{i}}.
\end{align*}
So, the proportion of $m\in C_{M_{i}}$ that satisfy both 
\begin{align*}
|\dist(\mathscr{P}_{i})-\dist_{S_{n_{i}-1}^{m-C_{K_{n_{i}}}}(A)}(\mathscr{P}_{i})|<\epsilon_{n_{i}}
\end{align*}
and
\begin{align*}
|\dist(\mathscr{P}_{i})-\dist_{S_{n_{i}-1}^{m+\ell_{i}-C_{K_{n_{i}}}}(A)}(\mathscr{P}_{i})|<\epsilon_{n_{i}}
\end{align*}
is at most 
$8\epsilon_{n_{i-1}}+2\epsilon_{n_{i}}$.
Now, notice that, since we set $\ell_{i}=L_{n_{i}}$ to be exactly one block length, for all $b\in A$,  $S_{n_{i}-1}^{m-C_{K_{n_{i}}}}(b)$ is not in a rigid block of $\beta_{n_{i}}$ or $S_{n_{i}-1}^{m+\ell_{i}-C_{K_{n_{i}}}}(b)$ is not in a rigid block of $\beta_{n_{i}}$  precisely when $S_{n_{i}-1}^{m}(b)$ occupies one of the first $K_{n_{i}}$ levels of a rigid block or $S_{n_{i}-1}^{m}(b)$ is in the top block of the tower. This accounts for at most a fraction $\frac{K_{n_{i}}}{L_{n_{i}}}+\frac{L_{n_{i}}}{M_{n_{i}}}<2\epsilon_{n_{i}}$ of the $m\in C_{M_{i}}$. Combining this with our previous estimate yields that at least a fraction $1-8\epsilon_{n_{i-1}}+4\epsilon_{n_{i}}$ of the $m\in M_{n_{i}}$ are good positions for $A$.\par

\end{proof}
\begin{proposition} There is a sequence $\{\delta_{i}\}_{i\in\N}$ of non-negative real numbers such that $\lim_{i\to\infty}\delta_{i}=0$ and, for every $i>1$ and for every $j\geq i$, the action $S_{n_{j}}$ is $\delta_{i}$-mixing relative to $(\ell_{i},\mathscr{P}_{i})$.
\end{proposition}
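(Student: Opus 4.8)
The plan is to argue in two stages. First, I would show that for each fixed $i>1$ the action $S_{n_i}$ itself is $\delta_i'$-mixing relative to $(\ell_i,\mathscr{P}_i)$ for an explicit $\delta_i'\to 0$, and then show that replacing $S_{n_i}$ by $S_{n_j}$ with $j\geq i$ perturbs the distribution functions involved by an amount $\eta^{(i)}$ that is bounded \emph{uniformly in} $j$ and tends to $0$ as $i\to\infty$; then $\delta_i:=\delta_i'+\eta^{(i)}$ works (the value $\delta_1$ is irrelevant and may be set to $1$). Throughout recall $\ell_i=L_{n_i}$ from Remark 7.13, and note that $|\dist(\mathscr{P}\vee S^{\ell}\mathscr{P})-\dist(\mathscr{P})\times\dist(\mathscr{P})|=|\dist(\mathscr{P}\vee S^{-\ell}\mathscr{P})-\dist(\mathscr{P})\times\dist(\mathscr{P})|$ by measure-preservation and reindexing atoms, so the conclusion of Lemma 7.10 really is an estimate of the quantity appearing in Definition 7.6.

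For the first stage, fix an atom $A$ of the pure partition of $B_{n_i}$ with respect to $\mathscr{P}_i\vee\{O_{i-1},X\setminus O_{i-1}\}$ and recall $\bar A$ from Remark 7.7. The column $S_{n_i}^{C_{M_{n_i}}}\bar A$ is the disjoint union of its levels $S_{n_i}^{m}\bar A$, $m\in C_{M_{n_i}}$, all of measure $\mu(\bar A)$, so $\dist_{S_{n_i}^{C_{M_{n_i}}}\bar A}(\mathscr{P}_i\vee S_{n_i}^{-\ell_i}\mathscr{P}_i)$ is the average over $m$ of the level distributions $\dist_{S_{n_i}^{m}\bar A}(\mathscr{P}_i\vee S_{n_i}^{-\ell_i}\mathscr{P}_i)$. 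By Lemma 7.17 all but a fraction $\leq 8\epsilon_{n_{i-1}}+4\epsilon_{n_i}$ of the $m$ are good positions for $A$; for each such $m$ Lemma 7.16 bounds the per-level discrepancy by $2\epsilon_{n_i}$, while the remaining levels contribute at most $1$ to the average on any single atom of $\mathscr{P}_i\vee S_{n_i}^{-\ell_i}\mathscr{P}_i$. Thus the column discrepancy is $<8\epsilon_{n_{i-1}}+6\epsilon_{n_i}$ for every such $A$, so Lemma 7.10 with $\ell=\ell_i$ and $\epsilon=8\epsilon_{n_{i-1}}+6\epsilon_{n_i}$ gives
\[
|\dist(\mathscr{P}_i\vee S_{n_i}^{\ell_i}\mathscr{P}_i)-\dist(\mathscr{P}_i)\times\dist(\mathscr{P}_i)|<8\epsilon_{n_{i-1}}+7\epsilon_{n_i}=:\delta_i'.
\]

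For the second stage, set $\eta_k=\sum_{i'=n_k+1}^{n_{k+1}}\big(\tfrac{\epsilon_{i'}}{K_{n_{k+1}}^{2}}+\tfrac{K_{i'}}{L_{i'}}+\tfrac{\ell_i}{L_{i'}}\big)$. Lemma 7.11 with $\ell=\ell_i$ and $m=n_{k+1}$ shows that $S_{n_{k+1}}^{\ell_i}$ and $S_{n_k}^{\ell_i}$ agree off a set of measure $\leq\eta_k$; chaining over $k=i,\dots,j-1$, the maps $S_{n_j}^{\ell_i}$ and $S_{n_i}^{\ell_i}$ agree off a set of measure $\leq\sum_{k=i}^{j-1}\eta_k\leq\eta^{(i)}:=\sum_{k\geq i}\eta_k$. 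As the intervals $\{n_k+1,\dots,n_{k+1}\}$, $k\geq i$, tile $\{n_i+1,n_i+2,\dots\}$, we get $\eta^{(i)}\leq\sum_{i'>n_i}\big(\epsilon_{i'}+\tfrac{K_{i'}}{L_{i'}}+\tfrac{L_{n_i}}{L_{i'}}\big)$, which is finite and $O(2^{-n_i})$ because $\epsilon_{i'}<2^{-i'}$, because $\tfrac{K_{i'}}{L_{i'}}<\epsilon_{i'}$ (a consequence of $\tfrac{K_{i'}^{3}}{L_{i'}}<\epsilon_{i'}$ and its Case~2 analogue), and because $L_{i'+1}>L_{i'}/\epsilon_{i'+1}$ forces $\tfrac{L_{n_i}}{L_{i'}}$ to decay faster than geometrically in $i'$. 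Since the $S_{n_j}$ are measure-preserving, $S_{n_j}^{-\ell_i}$ and $S_{n_i}^{-\ell_i}$ also agree off a set of measure $\leq\eta^{(i)}$, hence $|\mu(P\cap S_{n_j}^{-\ell_i}P')-\mu(P\cap S_{n_i}^{-\ell_i}P')|\leq\mu(S_{n_j}^{-\ell_i}P'\,\triangle\,S_{n_i}^{-\ell_i}P')\leq\eta^{(i)}$ for all $P,P'\in\mathscr{P}_i$. Combining with the first stage, $S_{n_j}$ is $(\delta_i'+\eta^{(i)})$-mixing relative to $(\ell_i,\mathscr{P}_i)$ for every $j\geq i$, and $\delta_i:=\delta_i'+\eta^{(i)}\to 0$ since $n_{i-1},n_i\to\infty$ and $\eta^{(i)}\to 0$.

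The hard part is the second stage: it is not enough that $S_{n_j}$ approximates $S_{n_i}$ for each fixed $j$ — one needs the measure of the set where $S_{n_j}^{\ell_i}$ disagrees with $S_{n_i}^{\ell_i}$ to be controlled by a single $\eta^{(i)}$, independent of $j$ and vanishing as $i\to\infty$. This is exactly what the staggered parameter constraints (notably $\tfrac{K_n^{3}}{L_n}<\epsilon_n$ and $\tfrac{n+\sum_{k\leq n}L_k}{L_{n+1}-K_{n+1}}<\epsilon_n$) are arranged to deliver through Lemma 7.11, but extracting the uniform bound and checking that $\sum_k\eta_k$ telescopes against the summable sequences takes some bookkeeping. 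The first stage, by contrast, is a routine averaging over tower levels built directly on Lemmas 7.10, 7.16, and 7.17.
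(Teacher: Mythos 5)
Your proposal is correct and follows essentially the same route as the paper: first use the good-position lemmas together with Lemma 7.10 to get that $S_{n_i}$ is $(8\epsilon_{n_{i-1}}+7\epsilon_{n_i})$-mixing relative to $(\ell_i,\mathscr{P}_i)$, then use Lemma 7.11 to bound, uniformly in $j$, the measure of the set where $S_{n_j}^{\ell_i}$ and $S_{n_i}^{\ell_i}$ disagree (the paper bounds this directly by $3/2^{n_i}$, which is exactly your $\eta^{(i)}$) and absorb it into $\delta_i$. The only discrepancies are cosmetic: your citations of the good-position lemmas are off by one (they are Lemmas 7.15 and 7.16 in the paper), and your $\delta_i=\delta_i'+\eta^{(i)}$ matches the paper's $\delta_i=\tfrac{3}{2^{n_i}}+8\epsilon_{n_{i-1}}+7\epsilon_{n_i}$.
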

\begin{proof}
Set $\delta_{i}=\frac{3}{2^{n_{i}}}+8\epsilon_{n_{i-1}}+7\epsilon_{n_{i}}$. We will first show that $S_{n_{i}}$ is $\delta_{i}$-mixing relative to $(\ell_{i},\mathscr{P}_{i})$. Let $A$ be an atom of the pure partition of $B_{n_{i}}$ with respect to $\mathscr{P}_{i}\vee \{O_{i-1},X\setminus O_{i-1}\}$. By Lemma 7.15, if $m\in C_{M_{n_{i}}}$ is a good position for $A$, then 
\begin{align*}
|\dist_{S_{n_{i}}^{m}\bar{A}}\bigg( \mathscr{P}_{i}\vee S_{n_{i}}^{-\ell_{i}}\mathscr{P}_{i}\bigg)-\dist(\mathscr{P}_{i})\times \dist(\mathscr{P}_{i})|<2\epsilon_{n_{i}}.
\end{align*}
Combining this with Lemma 7.16 yields that
\begin{align*}
|\dist_{S_{n_{i}}^{C_{M_{i}}}\bar{A}}\bigg( \mathscr{P}_{i}\vee S_{n_{i}}^{-\ell}\mathscr{P}_{i}\bigg)-\dist(\mathscr{P}_{i})\times \dist(\mathscr{P}_{i})|<8\epsilon_{n_{i-1}}+6\epsilon_{n_{i}}.
\end{align*}
Now, we can see that Lemma 7.10
implies that
\begin{align*}
|\dist(\mathscr{P}_{i}\vee S_{n_{i}}^{\ell_{i}}\mathscr{P}_{i})-\dist(\mathscr{P}_{i})\times \dist(\mathscr{P}_{i})|<8\epsilon_{n_{i-1}}+7\epsilon_{n_{i}}<\delta_{i}.
\end{align*}
Hence, $S_{n_{i}}$ is $\delta_{i}$-mixing relative to $(\ell_{i},\mathscr{P}_{i})$.\par

Next, we will prove for every $j> i$, $S_{n_{j}}$ is $\delta_{i}$-mixing relative to $(\ell_{i},\mathscr{P}_{i})$. Following Lemma 7.11, we have that
\begin{align*}
\mu(\{x\mid S_{n_{j}}^{\ell_{i}}(x)\neq S_{n_{i}}^{\ell_{i}}(x)\})\leq \sum_{k=n_{i}+1}^{\infty}\epsilon_{k}+\sum_{k=n_{i}+1}^{\infty}\frac{K_{k}}{L_{k}}+\sum_{k=n_{i}+1}^{\infty}\frac{\ell_{i}}{L_{k}}<\frac{3}{2^{n_{i}}}
\end{align*}
Now, let $P,P'\in \mathscr{P}_{i}$. we can see that
\begin{align*}
|\mu(P\cap S_{n_{j}}^{-\ell_{i}}P')-\mu(P\cap S_{n_{i}}^{-\ell_{i}}P')|&\leq \mu((P\cap S_{n_{j}}^{-\ell_{i}}P')\triangle (P\cap S_{n_{i}}^{-\ell_{i}}P'))\\
&=\mu(P\cap( S_{n_{j}}^{-\ell_{i}}P'\triangle S_{n_{i}}^{-\ell_{i}}P'))\\
&\leq \mu(\{x\mid S_{n_{j}}^{\ell_{i}}(x)\neq S_{n_{i}}^{\ell_{i}}(x)\})\\
&\leq \frac{3}{2^{n_{i}}}
\end{align*}
So, we have that
\begin{align*}
|\dist(\mathscr{P}_{i}\vee S_{n_{j}}^{\ell_{i}}\mathscr{P}_{i})-\dist(\mathscr{P}_{i}\vee S_{n_{i}}^{\ell_{i}}\mathscr{P}_{i})|< \frac{3}{2^{n_{i}}}
\end{align*}
Thus,
\begin{align*}
|\dist(\mathscr{P}_{i}\vee S_{n_{j}}^{\ell_{i}}\mathscr{P}_{i})-\dist(\mathscr{P}_{i})\times \dist(\mathscr{P}_{i})|&\leq |\dist(\mathscr{P}_{i}\vee S_{n_{j}}^{\ell_{i}}\mathscr{P}_{i})-\dist(\mathscr{P}_{i}\vee S_{n_{i}}^{\ell_{i}}\mathscr{P}_{i})|\\
&\text{ }\text{ }\text{ }+|\dist(\mathscr{P}_{i}\vee S_{n_{i}}^{\ell_{i}}\mathscr{P}_{i})-\dist(\mathscr{P}_{i})\times \dist(\mathscr{P}_{i})|\\
&< \frac{3}{2^{n_{i}}}+8\epsilon_{n_{i-1}}+7\epsilon_{n_{i}}\\
&=\delta_{i}.
\end{align*}
\end{proof}
We are now equipped to prove the main theorem.
\begin{theorem}
Suppose $\{\alpha_{n}\}_{n\in\N}$ is a sequence of cocycles of an action $\Z\curvearrowright^{S} (X,\mu)$ with corresponding actions $S_{n}=S^{\alpha_{n}^{-1}}$ for every $n$ which converge to an action $T$.  If there is a sequence $\{\ell_{n}\}_{n\in\N}\subset \Z,$ a sequence $\{\delta_{n}\}_{n\in\N}$ such that $\lim_{n\to\infty}\delta_{n}=0$, and a sequence $\{\mathscr{P}_{n}\}_{n\in\N}$ of measurable partitions of $(X,\mu)$ which are sequentially refining and generate, modulo null sets, the $\sigma$-algebra of $(X,\mu)$ such that, for every $i\in\N$ and $j\geq i$, $S_{n_{j}}$ is weakly $\delta_{i}$-mixing relative to $(\ell_{i},\mathscr{P}_{i})$, we have that $T$ is weakly mixing.
\end{theorem}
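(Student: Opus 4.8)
The plan is to reduce everything to Proposition 7.2: it suffices to exhibit a single sequence $\{m_k\}\subset\Z$ with $\mu(A\cap T^{m_k}B)\to\mu(A)\mu(B)$ for all measurable $A,B$, and the natural candidate is the given sequence $\{\ell_i\}_{i\in\N}$. Before feeding it into Proposition 7.2 I would record that each $k\in\Z$ occurs only finitely often among the $\ell_i$, so that $\{\ell_i\}$ eventually leaves every finite subset of $\Z$ (which the proof of Proposition 7.2 tacitly uses): if $\ell_i=k$ for infinitely many $i$, then the estimate obtained in the next paragraph, applied along those indices with $\delta_i\to0$ and $\{\mathscr{P}_i\}$ generating, would force $\mu(B\cap T^{k}B')=\mu(B)\mu(B')$ for all measurable $B,B'$, which is impossible in an atomless space.

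First I would transfer the relative mixing from the approximating actions to $T$. Because the actions $S_n$ converge to $T$, for a.e.\ $x$ and every $v\in\Z$ we have $S_n^{v}(x)=T^{v}(x)$ for all large $n$; hence, for each fixed $i$, $\eta_{i,j}:=\mu(\{x:S_{n_j}^{\ell_i}(x)\neq T^{\ell_i}(x)\})\to0$ as $j\to\infty$. For atoms $P,P'$ of $\mathscr{P}_i$ and any $j\geq i$, $\delta_i$-mixing of $S_{n_j}$ relative to $(\ell_i,\mathscr{P}_i)$ (Definition 7.5 with Remark 7.4, using measure-preservation) gives $|\mu(P\cap S_{n_j}^{\ell_i}P')-\mu(P)\mu(P')|<\delta_i$, while $|\mu(P\cap T^{\ell_i}P')-\mu(P\cap S_{n_j}^{\ell_i}P')|\leq\eta_{i,j}$. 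Letting $j\to\infty$ I obtain, for every $i$ and all atoms $P,P'$ of $\mathscr{P}_i$,
\[
|\mu(P\cap T^{\ell_i}P')-\mu(P)\mu(P')|\leq\delta_i .
\]

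Next comes the approximation step. Fix measurable $A,B$ and $\varepsilon>0$. Since $\{\mathscr{P}_i\}$ is refining and generates $\mathscr{B}$ modulo null sets, for all large $i$ I may choose finite unions of atoms $\hat A_i=\bigsqcup_{P\in\mathscr{F}_{A,i}}P$ and $\hat B_i=\bigsqcup_{P'\in\mathscr{F}_{B,i}}P'$ of $\mathscr{P}_i$ with $\mu(A\triangle\hat A_i),\mu(B\triangle\hat B_i)<\varepsilon$. Summing the displayed inequality over $\mathscr{F}_{A,i}\times\mathscr{F}_{B,i}$, using $\mu(\hat A_i)\mu(\hat B_i)=\sum_{P,P'}\mu(P)\mu(P')$, and then replacing $\hat A_i,\hat B_i$ by $A,B$ (which costs at most $4\varepsilon$, as $T^{\ell_i}$ preserves $\mu$) yields
\[
|\mu(A\cap T^{\ell_i}B)-\mu(A)\mu(B)|\leq 4\varepsilon+|\mathscr{F}_{A,i}|\,|\mathscr{F}_{B,i}|\,\delta_i .
\]
Letting $i\to\infty$ and then $\varepsilon\to0$, the right side tends to $0$ provided the error term $|\mathscr{F}_{A,i}|\,|\mathscr{F}_{B,i}|\,\delta_i$ does, after which Proposition 7.2 gives that $T$ is weak mixing.

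The hard part is precisely controlling that error term. Resolving $A$ and $B$ to within $\varepsilon$ may require a number of atoms of $\mathscr{P}_i$ that grows with $i$ — at worst all of $\mathscr{P}_i$ — so the per-pair bound $\delta_i$ gets multiplied by something as large as $|\mathscr{P}_i|^2$; one therefore needs $\delta_i$ to decay fast enough relative to the refinement of $\{\mathscr{P}_i\}$, and it suffices that $|\mathscr{P}_i|^{2}\delta_i\to0$. This quantitative compatibility is not automatic from $\delta_i\to0$ alone (one can otherwise fit an irrational rotation into the hypotheses), so it must be carried along; in the setting of Section 7 it costs nothing, since the generating sequence $\{\mathscr{P}_n\}$ is fixed first and afterwards each $n_i$ — hence the smallness of $\delta_i\approx 2^{-n_{i-1}}$ — may be taken as large as desired, so one simply imposes, say, $2^{n_{i-1}}>|\mathscr{P}_i|^{2}$ at each stage. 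Thus the plan is: establish the clean estimate above, and either include the smallness of $|\mathscr{P}_i|^{2}\delta_i$ among the hypotheses or verify it directly for the sequences produced in Section 7.
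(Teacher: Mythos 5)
Your argument follows the same route as the paper's proof: transfer the relative mixing of the $S_{n_j}$ to $T$ via a.e.\ convergence of the cocycles, approximate $A$ and $B$ by finite unions of atoms of $\mathscr{P}_i$, and finish with Proposition 7.2 (your preliminary observation that no value of $\Z$ can recur infinitely often among the $\ell_i$ is correct, and it supplies a point that Proposition 7.2 uses tacitly). Where you diverge is the error term $|\mathscr{F}_{A,i}|\,|\mathscr{F}_{B,i}|\,\delta_i$, and your concern there is legitimate rather than a defect of your write-up: Definition 7.5 measures the deviation atom-by-atom in the max metric of Remark 7.4, so passing to unions $P=\bigcup_i P_i$, $Q=\bigcup_i Q_i$ genuinely multiplies $\delta_k$ by the number of atom pairs, whereas the paper's proof applies the bound $|\mu(P\cap S_{n_j}^{-\ell_k}Q)-\mu(P)\mu(Q)|<\delta_k$ directly to the unions, which the stated hypotheses do not justify. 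Your irrational-rotation remark shows the issue is not cosmetic: taking all cocycles trivial, $S_n=S=T$ an irrational rotation, and $\mathscr{P}_i$ the dyadic partitions, every atom has measure $2^{-i}$, so the condition of Definition 7.5 holds automatically with $\delta_i=2^{-i+1}$ for any $\ell_i$, yet $T$ is not weakly mixing; hence some quantitative compatibility such as $|\mathscr{P}_i|^{2}\delta_i\to 0$ (equivalently, an $\ell^{1}$/total-variation form of Definition 7.5) must be added to the hypotheses of the theorem. Your proposed repair is the right one and is cheap in the Section 7 construction: the generating sequence $\{\mathscr{P}_i\}$ is fixed in advance and the dominant term of $\delta_i$ is of order $\epsilon_{n_{i-1}}\le 2^{-n_{i-1}}$, so demanding at each stage that $2^{-n_{i-1}}$ beat $|\mathscr{P}_i|^{2}$ (which only requires relaxing the minimality in the choice of the $n_j$, or shrinking the $\epsilon_n$) costs nothing, and with that condition carried along your estimate $|\mu(A\cap T^{\ell_i}B)-\mu(A)\mu(B)|\le 4\varepsilon+|\mathscr{F}_{A,i}|\,|\mathscr{F}_{B,i}|\,\delta_i$ completes the proof.
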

\begin{proof}
By Proposition 7.2, we only need to show that for every pair $A,B\subset X$ of non-null measurable sets we have
\begin{align*}
\lim_{i\to \infty}\mu(A\cap T^{-\ell_{i}}B)=\mu(A)\mu(B).
\end{align*}
Let non-null measurable sets $A,B\subset X$ and an $\epsilon>0$ be given. Choose an $N\in\N$ such for all $k>N$:
\begin{enumerate}
\item we can find $j_{1},j_{2}\in\N$ (depending on $k$) such that there are sets $P_{i}\in\mathscr{P}_{n}$ for every $i\in\{1,\dots,j_{1}\}$ and sets $Q_{i}\in \mathscr{P}_{n}$ for $i\in\{1,\dots,j_{2}\}$ such that $\mu\bigg((\cup_{i=1}^{j_{1}}P_{i})\triangle A\bigg)<\frac{\epsilon}{6}$ and $\mu\bigg((\cup_{i=1}^{j_{2}}Q_{i})\triangle B\bigg)<\frac{\epsilon}{6}$.
\item $\delta_{k}<\frac{\epsilon}{6}$.
\end{enumerate}
Fix $k>N$ and set $P=\cup_{i=1}^{j_{1}}P_{i}$ and $Q=\cup_{i=1}^{j_{2}}Q_{i}$. First, note that we can find a $j>k$ such that 
\begin{align*}
\mu\bigg(\{x\mid T^{\ell_{k}}(x)\neq S_{n_{j}}^{\ell_{k}}(x)\}\bigg)<\frac{\epsilon}{6}.
\end{align*}
Hence, 
\begin{align*}
|\mu(P\cap S_{n_{j}}^{-\ell_{k}}Q)-\mu(P\cap T^{-\ell_{k}}Q)|&\leq \mu((P\cap S_{n_{j}}^{-\ell_{k}}P')\triangle (P\cap T^{-\ell_{k}}Q)))\\
&=\mu(P\cap( S_{n_{j}}^{-\ell_{k}}P'\triangle T^{-\ell_{k}}P'))\\
&\leq \mu(\{x\mid S_{n_{j}}^{\ell_{k}}(x)\neq T^{\ell_{k}}(x)\})\\
&\leq \frac{\epsilon}{6}.
\end{align*}
Combining this with Proposition 7.17 and the fact that $\delta_{k}<\frac{\epsilon}{6}$ yields
\begin{align*}
|\mu(P\cap T^{-\ell_{k}}Q)-\mu(P)\mu(Q)|&\leq |\mu(P\cap S_{n_{j}}^{-\ell_{k}}Q)-\mu(P\cap T^{-\ell_{k}}Q)|\\
&\text{ }\text{ }\text{ }+|\mu(P\cap S_{n_{j}}^{-\ell_{k}}Q)-\mu(P)\mu(Q)|\\
&< \delta_{k}+\frac{\epsilon}{6}\\
&<\frac{2\epsilon}{6}.
\end{align*}
The following inequality also holds:
\begin{align*}
|\mu(A\cap T^{-\ell_{k}}B)-\mu(P\cap T^{-\ell_{k}}Q)|&\leq |\mu(A\cap T^{-\ell_{k}}B)-\mu(P\cap T^{-\ell_{k}}B)|\\
&\text{ }\text{ }\text{ }+|\mu(P\cap T^{-\ell_{k}}B)-\mu(P\cap T^{-\ell_{k}}Q)|\\
&\leq |\mu((A\triangle P)\cap T^{-\ell_{k}}B)|\\
&\text{ }\text{ }\text{ }+|\mu(P\cap (T^{-\ell_{k}}B\triangle T^{-\ell_{k}}Q))|\\
&\leq \mu(A\triangle P)+\mu(T^{-\ell_{k}}B\triangle T^{-\ell_{k}}Q)\\
&\leq \mu(A\triangle P)+\mu(B\triangle Q)\\
&<\frac{2\epsilon}{6}
\end{align*}
Finally, putting everything together, we get the following:
\begin{align*}
|\mu(A\cap T^{-\ell_{k}}B)-\mu(A)\mu(B)|&\leq |\mu(A\cap T^{-\ell_{k}}B)-\mu(P\cap T^{-\ell_{k}}Q)|\\
&\text{ }\text{ }\text{ }+|\mu(P\cap T^{-\ell_{k}}Q)-\mu(P)\mu(Q)|\\
&\text{ }\text{ }\text{ }+|\mu(P)\mu(Q)-\mu(A)\mu(B)|\\
&<\frac{2\epsilon}{6}+\frac{2\epsilon}{6}+\frac{2\epsilon}{6}=\epsilon
\end{align*}
Since $k>N$ was arbitrary, we get that $T$ is weakly mixing.
\end{proof}
\end{section}

\bibliographystyle{amsplain}
\bibliography{fincodingsbib}
\end{document}